\documentclass[11pt]{article}
\usepackage{amsmath,amssymb,amsthm,amscd}
\usepackage[all]{xy}

\usepackage{graphics}

\newtheorem{theorem}{Theorem}[section]
\newtheorem{lemma}[theorem]{Lemma}
\newtheorem{proposition}[theorem]{Proposition}
\newtheorem{corollary}[theorem]{Corollary}
\newtheorem{example}[theorem]{Example}

\theoremstyle{plain}

\theoremstyle{definition}
\newtheorem{definition}[theorem]{Definition}
\newtheorem{remark}[theorem]{Remark}

\numberwithin{equation}{section}

\renewcommand{\theenumi}{(\roman{enumi})}
\renewcommand{\labelenumi}{\textup{(\theenumi)}}

\title{
A class of Exel--Laca algebras reciprocal to Cuntz--Krieger algebras} 
\author{Kengo Matsumoto \\
Department of Mathematics \\
Joetsu University of Education \\
Joetsu, Niigata 943-8512, Japan,
\and
Taro Sogabe \\
Faculty of Advanced Science and Technology \\
Kumamoto University \\
Kurokami 2-40-1, Chuo-ku, Kumamoto 860-8555, Japan
}

\begin{document}

%\date{2024, Feb 19}

\maketitle

\date{}

\def\det{{{\operatorname{det}}}}

%\maketitle
\begin{abstract}
The reciprocality means a duality in Kirchberg algebras   
between  $\operatorname{K}$-theory groups and strong extension groups.
In the paper, we will find a certain class of unital simple Exel--Laca algebras
for which the reciprocal duals are simple Cuntz--Krieger algebras
in terms of the underlying infinite matrices.
In our  procedure to obtain simple Cuntz--Krieger algebras from Exel--Laca algebras,
we compute the strong extension groups for Exel--Laca algebras belonging to the class. 
 \end{abstract}

{\it Mathematics Subject Classification}:
Primary 46L80; Secondary 19K33, 19K35.

{\it Keywords and phrases}: reciprocality,  K-group, Ext-group,  
$C^*$-algebra,  Cuntz--Krieger algebra, Exel--Laca algebra, Kirchberg algebra

%\tableofcontents

\newcommand{\Ker}{\operatorname{Ker}}
\newcommand{\sgn}{\operatorname{sgn}}
\newcommand{\Ad}{\operatorname{Ad}}
\newcommand{\ad}{\operatorname{ad}}
\newcommand{\orb}{\operatorname{orb}}
\newcommand{\rank}{\operatorname{rank}}

\def\Re{{\operatorname{Re}}}
\def\det{{{\operatorname{det}}}}
\newcommand{\K}{\operatorname{K}}

\newcommand{\sqK}{\operatorname{K}\!\operatorname{K}}

\newcommand{\bbK}{\mathbb{K}}
\newcommand{\N}{\mathbb{N}}
\newcommand{\bbC}{\mathbb{C}}
\newcommand{\R}{\mathbb{R}}
\newcommand{\Rp}{{\mathbb{R}}^*_+}
\newcommand{\T}{\mathcal{T}}
\newcommand{\bbT}{\mathbb{T}}

\newcommand{\Z}{\mathbb{Z}}
\newcommand{\Zp}{{\mathbb{Z}}_+}
\def\AF{{{\operatorname{AF}}}}

\def\TorZ{{{\operatorname{Tor}}^\Z_1}}
\def\Ext{{{\operatorname{Ext}}}}
\def\Exts{\operatorname{Ext}_{\operatorname{s}}}
\def\Extw{\operatorname{Ext}_{\operatorname{w}}}
\def\Ext{{{\operatorname{Ext}}}}
\def\Free{{{\operatorname{Free}}}}

\def\Ks{\operatorname{K}^{\operatorname{s}}}
\def\Kw{\operatorname{K}^{\operatorname{w}}}

\def\OA{{{\mathcal{O}}_A}}
\def\ON{{{\mathcal{O}}_N}}
\def\OAT{{{\mathcal{O}}_{A^t}}}
\def\OAI{{{\mathcal{O}}_{A^\infty}}}
\def\OAIn{{{\mathcal{O}}_{A^\infty_n}}}
\def\OAInone{{{\mathcal{O}}_{A^\infty_{n+1}}}}

\def\OalgAI{{{\mathcal{O}}^{\operatorname{alg}}_{A^\infty}}}

\def\OAIN{{{\mathcal{O}}_{A^\infty_N}}}
\def\OATI{{{\mathcal{O}}_{A^{t \infty}}}}

\def\PAI{{{\mathcal{P}}_{A^\infty}}}
\def\FAI{{{\mathcal{F}}_{A^\infty}}}
\def\IAI{{{\mathcal{I}}_{A^\infty}}}
\def\IAIn{{{\mathcal{I}}_{A^\infty}^n}}

\def\OSA{{{\mathcal{O}}_{S_A}}}

\def\TA{{{\mathcal{T}}_A}}
\def\TAn{{{\mathcal{T}}_{A_n}}}
\def\TAnone{{{\mathcal{T}}_{A_{n+1}}}}
\def\TAN{{{\mathcal{T}}_{A_N}}}

\def\whatA{{\widehat{\A}}}
\def\whatB{{\widehat{\B}}}

\def\TN{{{\mathcal{T}}_N}}

\def\TAT{{{\mathcal{T}}_{A^t}}}

\def\TB{{{\mathcal{T}}_B}}
\def\TBT{{{\mathcal{T}}_{B^t}}}

\def\A{{\mathcal{A}}}
\def\B{{\mathcal{B}}}
\def\C{{\mathcal{C}}}
\def\D{{\mathcal{D}}}
\def\O{{\mathcal{O}}}
\def\OaA{{{\mathcal{O}}^a_A}}
\def\OB{{{\mathcal{O}}_B}}
\def\OTA{{{\mathcal{O}}_{\tilde{A}}}}
\def\F{{\mathcal{F}}}
\def\G{{\mathcal{G}}}
\def\FA{{{\mathcal{F}}_A}}
\def\PA{{{\mathcal{P}}_A}}
\def\PI{{{\mathcal{P}}_\infty}}
\def\OI{{{\mathcal{O}}_\infty}}

\def\OalgI{{{\mathcal{O}}^{\operatorname{alg}}_\infty}}

\def\calI{\mathcal{I}}
\def\calK{\mathcal{K}}
\def\calP{\mathcal{P}}
\def\calQ{\mathcal{Q}}
\def\calR{\mathcal{R}}
\def\calC{\mathcal{C}}
\def\calD{\mathcal{D}}
\def\calM{\mathcal{M}}

\def\bbC{{\mathbb{C}}}

\def\U{{\mathcal{U}}}
\def\OF{{{\mathcal{O}}_F}}
\def\DF{{{\mathcal{D}}_F}}
\def\FB{{{\mathcal{F}}_B}}
\def\DA{{{\mathcal{D}}_A}}
\def\DB{{{\mathcal{D}}_B}}
\def\DZ{{{\mathcal{D}}_Z}}

\def\End{{{\operatorname{End}}}}

\def\Ext{{{\operatorname{Ext}}}}
\def\Hom{{{\operatorname{Hom}}}}

\def\Tor{{{\operatorname{Tor}}}}

\def\Max{{{\operatorname{Max}}}}
\def\Max{{{\operatorname{Max}}}}
\def\max{{{\operatorname{max}}}}
\def\KMS{{{\operatorname{KMS}}}}
\def\Per{{{\operatorname{Per}}}}
\def\Out{{{\operatorname{Out}}}}
\def\Aut{{{\operatorname{Aut}}}}
\def\Ad{{{\operatorname{Ad}}}}
\def\Inn{{{\operatorname{Inn}}}}
\def\Int{{{\operatorname{Int}}}}
\def\det{{{\operatorname{det}}}}
\def\exp{{{\operatorname{exp}}}}
\def\nep{{{\operatorname{nep}}}}
\def\sgn{{{\operatorname{sign}}}}
\def\cobdy{{{\operatorname{cobdy}}}}
\def\Ker{{{\operatorname{Ker}}}}
\def\Coker{{{\operatorname{Coker}}}}
\def\Im{{\operatorname{Im}}}

\def\ind{{{\operatorname{ind}}}}
\def\Ind{{{\operatorname{Ind}}}}
\def\id{{{\operatorname{id}}}}
\def\supp{{{\operatorname{supp}}}}
\def\co{{{\operatorname{co}}}}
\def\scoe{{{\operatorname{scoe}}}}
\def\coe{{{\operatorname{coe}}}}
\def\I{{\mathcal{I}}}
\def\Span{{{\operatorname{Span}}}}
\def\event{{{\operatorname{event}}}}
\def\Proj{{{\operatorname{Proj}}}}
\def\S{\mathcal{S}}

\def\PI{\mathcal{P}_{\infty}}
\def\PAI{{{\mathcal{P}}_{A^\infty}}}

\def\whatOA{\widehat{\mathcal{O}}_A}
\def\whatOAT{\widehat{\mathcal{O}}_{A^t}}

\def\sAI{\sigma_{A^\infty}}
\def\swhatA{\sigma_{\widehat{A}}}
\def\OalgAI{{{\mathcal{O}}^{\operatorname{alg}}_{A^\infty}}}

\def\wtO{\widetilde{O}}
\def\wtA{\widetilde{A}}
\def\wtAI{\widetilde{A}_{\infty}}
\def\wttAI{\widetilde{A}^t_{\infty}}
\def\E{\mathcal{E}}
\def\opE{\operatorname{E}}
\def\CR{\color{red}}
\def\CB{\color{blue}}
\def\gATI{{\gamma_{A^{t \infty}}}}

\def\OCKA{{\O}^{\text{\tiny CK}}_A}
\def\OELA{{\O}^{\text{\tiny EL}}_A}
\def\OELwA{{\O}^{\text{\tiny EL}}_{\widehat{A}}}

\def\OCK{{\O}^{\text{\tiny CK}}}
\def\OEL{{\O}^{\text{\tiny EL}}}

\def\wtT{\widetilde{\mathcal{T}}}
\def\wtTAn{\widetilde{\mathcal{T}}_{A_n}}
\def\wtTAN{\widetilde{\mathcal{T}}_{A_N}}

\def\Cokern{\Coker(I_n - \widetilde{A}_{C_n})}
\def\Cokernplus1{\Coker(I_{n+1} - \widetilde{A}_{C_{n+1}})}

%\newpage

%%%%%%%%%%%%%%%%%%%%%%%%%%%%%%%%%%%%%%%

%%%%%%%%%%%%%%%%%%%%%%%%%%%%%%%%%%%%%%%%%%%%%%%%%%%%%%%%%%%%%%
\section{Introduction}
%%%%%%%%%%%%%%%%%%%%%%%%%%%%%
The Spanier--Whitehead $\K$-dual, introduced in \cite{KS}, of a Kirchberg algebra $\A$ 
with finitely generated $\K$-groups
is defined as a Kirchberg algebra $D(\A)$ satisfying 
$\K_*(\A)\cong \K^*(D(\A))$, where $\K^*(\,\,\,)$ is the $\K$-homology groups.
The Kirchberg algebra $D(\A)$ is uniquely determined by $\A$ up to $\sqK$-equivalence.
Recently, the second named author  introduced in \cite{SogabeJFA}
another kind of $\K$-theoretic duality called the reciprocality.     
The notion of reciprocality in Kirchberg algebras with finitely generated $\K$-groups 
was introduced %by the second named author in \cite{SogabeJFA}
to investigate the homotopy groups of the automorphism groups and bundles of Kirchberg algebras.
Two Kirchberg algebras $\A, \B$ with finitely generated $\K$-groups
are said to be {\it reciprocal}\/ if 
$\A \sim_{\sqK} D(C_\B)$ and 
$\B \sim_{\sqK} D(C_\A)$, where $D(C_\A)$ and $D(C_\B)$ are the Spanier--Whitehead 
$\K$-duals of the mapping cone algebras $C_\A$
%$(=\{ x \in C_{(0,1]}\otimes\A \mid x(1) \in \mathbb{C}\})$ 
and $C_\B$, respectively, 
and $\sim_{\sqK}$ 
is a $\sqK$-equivalence.
If $\A$ and $\B$ are reciprocal, then $\A$ (resp. $\B$) is said to be the reciprocal 
dual of $\B$ (resp. $\A$).
The reciprocal dual of $\A$ is uniquely determined by $\A$ up to isomorphism,
and written as $\widehat{\A}$. 
As in \cite{MatSogabe2} and \cite{MatSogabe3}, 
the reciprocal duality is a duality between the $\K$-groups 
$\K_*(\,\,\,)$ and the strong extension groups $\Exts^*(\,\,\,)$,
  whereas the Spanier-Whitehead $\K$-duality is a duality between the $\K$-groups 
$\K_*(\,\,\,)$ and the weak extension groups $\Extw^*(\,\,\,)$.
So, two Kirchberg algebras $\A$ and $\B$ are reciprocal if and only if
$\K_i(\A) \cong \Exts^{i+1}(\B)$ and  
$\K_i(\B) \cong \Exts^{i+1}(\A)$ for $i=0,1$ hold (cf. \cite[Proposition 3.7]{MatSogabe2}). 
For an irreducible non-permutation matrix $A$ with entries in $\{0,1\}$,
if the size of the matrix is finite, 
the associated Cuntz--Krieger algebra 
is denoted by $\OCKA$ (\cite{CK}), and
if the size of the matrix is countably infinite,
 the associated unital Exel--Laca algebra 
is denoted by $\OELA$(\cite{EL}).
Both of the classes provide important examples of Kirchberg algebras.

In the series of papers \cite{MatSogabe}, \cite{MatSogabe2}, 
\cite{MatSogabe3}, \cite{MatSogabe4} and \cite{MatSogabe5},
the authors have been studying the reciprocal duality in Kirchberg algebras.
It was proved in \cite{MatSogabe} that  the homotopy groups of the automorphism
groups of Cuntz--Krieger algebras are complete invariants of the isomorphism classes
of the algebras  by using the reciprocality.   
As an application, we showed that the two groups $\Extw(\OCKA)$ and $\Exts(\OCKA)$
are a complete set of invariants of the isomorphism class of $\OCKA$.
The discussions were generalized to a wider class of Kirchberg algebras 
with finitely generated $\K$-groups in \cite{MatSogabe2}.  
One of the most interesting and important problems was to find
and realize the reciprocal dual $\widehat{\OCKA}$ of a simple Cuntz--Krieger algebra $\OCKA$
in a concrete way in terms of the original matrix $A$. 
We then found the answer of the problem in \cite{MatSogabe3} in the following way. 
Let $A=[A(i,j)]_{i,j=1}^N$ be an $N \times N$ irreducible non-permutation 
finite matrix with entries
in $\{0,1\}$. 
The reciprocal dual $\widehat{\OCKA}$ of the simple Cuntz--Krieger algebra $\OCKA$
is realized to be the unital Exel--Laca algebra $\OELwA$ 
defined by the infinite irreducible matrix $\widehat{A}$ such that     
\begin{equation}\label{eq:widehatA}
\widehat{A}=
\left[
\begin{array}{ccccccc}
&                &1      &0     &0     &\cdots \\
&\text{\Huge A}^t&\vdots&\vdots&\vdots&\vdots \\
&                &1      &0     &0     &\cdots \\
1&\dots          &1      &1     &1     &\cdots \\
1&\dots          &1      &0     &0     &\cdots \\
1&\dots          &1      &0     &0     &\cdots \\
\vdots &\dots    &\vdots &\vdots&\vdots&\vdots \\
\end{array}
\right].
\end{equation}
We have to mention that 
the unital Exel--Laca algebra $\OELwA$ is a Kirchberg algebra such that 
$\rank(\K_0(\OELwA)) - \rank(\K_1(\OELwA)) =1$, whereas   
the original Cuntz--Krieger algebra $\OCKA$ satisfies
$\rank(\K_0(\OCKA)) - \rank(\K_1(\OCKA)) =0$
(\cite[Proposition 4.4]{MatSogabe3}).
Hence the reciprocal dual $\OELwA$ can not be realized as a simple Cuntz--Krieger algebra.

In this paper, we will  find a certain class of unital Exel--Laca algebras containing 
the class of algebras $\OEL_{\widehat{A}}$ for the matrices $\widehat{A}$ defined in \eqref{eq:widehatA}
 in terms of the underlying matrices
for which the reciprocal dual is realized as a simple Cuntz--Krieger algebra.
 We will then clarify the matrix operations to obtain the finite irreducible matrices
 defining the Cuntz--Krieger algebras
  from the primary given infinite matrices defining the Exel--Laca algebras.
 This gives us  the reverse procedure of the operation 
 from simple Cuntz--Krieger algebras $\OCKA$ to the reciprocal 
 dual $\widehat{\OCKA} ( = \OELwA)$.    
The class which we will forcus on in the Kirchberg algebras  
is the unital Exel--Laca algebras 
whose underlying matrices satisfy the following properties called (RSF), (RS), (DRS) and (LI).
An infinite matrix $A=[A(i,j)]_{i,j\in \N}$ naturally associates to an infinite  directed graph 
$G_A =(V_A, E_A)$ with vertex set $V_A = \N$.  
A vertex $i \in V_A$ is said to be row finite (resp. row complementary finite)
if the set $\{j\in \N \mid A(i,j) =1\}$  
(resp. $\{j\in \N \mid A(i,j) =0\}$) is finite, in this case we write $c_i =0$ (resp. $c_i =1$).
The matrix $A$ is said to be row semi-finite written (RSF) for short 
 if every vertex is either row finite or row complementary finite,
and there is at least one row complementary finite vertex.
The matrix $A$ satisfying (RSF)
 is said to be  right stable written (RS) 
 if there exists $K \in \N$ such that for any $n \ge K$,
the equalities $A(i,n+1) = c_i$ hold for $i=1,\dots, n$. 
If the additional diagonal condition, called (DC),
$A(n+1,n+1) = c_{n+1}$ for $n \ge K$ is satisfied, 
then it is said to be diagonally right stable written (DRS) for short.
The matrix $A$ is said to be locally irreducible written (LI) for short
 if the restricted $n \times n$ matrix $A_n =[A(i,j)]_{i,j=1}^n$
is irreducible for $n \ge K$.
The matrix $\widehat{A}$ defined in \eqref{eq:widehatA}
is (RSF), (DRS) and (LI) for every irreducible non-permutation  
finite matrix $A$. 
 The following theorem is our main result in this paper.

\begin{theorem}[{Theorem \ref{thm:main2}}]\label{thm:mainth2}
Let $A = [A(i,j)]_{i,j\in \N}$
be an irreducible infinite matrix with entries in $\{0,1\}$.
 Assume that $A$ is (DRS) and (LI).
Then there exists $K \in \N$ such that the reciprocal dual 
$\widehat{\OELA}$ of the unital Exel--Laca algebra $\OELA$
 is isomorphic to the simple Cuntz--Krieger algebra 
$\O^{\text{\tiny CK}}_{\widehat{A}_{C_K}}$ for the $(K+2)\times (K+2)$-finite matrix 
$\widehat{A}_{C_K}$ defined by 
\begin{equation}\label{eq:matrixhatACK}
\widehat{A}_{C_K}
:= 
\begin{bmatrix}
    &       &    & 1 & 0 \\
    & A_K^t &    & \vdots & \vdots \\
    &       &    & 1 & 0 \\
   1& \dots & 1  & 1 & 1 \\
c_1 & \dots & c_K& 1 & 1 
\end{bmatrix},
\quad
\end{equation}
where $A_K^t$ is the transposed matrix of the $K\times K$ matrix
$A_K = [A(i,j)]_{i,j=1}^K$ defined by the restriction of $A$.
\end{theorem}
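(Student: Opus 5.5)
The plan is to use the characterization recalled in the introduction: two Kirchberg algebras $\A,\B$ with finitely generated $\K$-groups are reciprocal if and only if $\K_i(\A)\cong \Exts^{i+1}(\B)$ and $\K_i(\B)\cong \Exts^{i+1}(\A)$ for $i=0,1$ (\cite[Proposition 3.7]{MatSogabe2}). Because the reciprocal dual is unique up to isomorphism, it is enough to do three things. First, check that $\widehat{A}_{C_K}$ is an irreducible non-permutation matrix, so that $\O^{\text{\tiny CK}}_{\widehat{A}_{C_K}}$ is a simple Cuntz--Krieger algebra and hence a Kirchberg algebra. Second, compute $\K_*$ and $\Exts^*$ of both $\OELA$ and $\O^{\text{\tiny CK}}_{\widehat{A}_{C_K}}$. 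Third, match these groups crosswise.

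Irreducibility of $\widehat{A}_{C_K}$ is the easy part. By (LI), $A_K$ is irreducible, so $A_K^t$ is irreducible as well. The vertex $K+1$ connects to every vertex, and every vertex $i\le K$ reaches $K+1$. The vertex $K+2$ is reached from $K+1$ and reaches $K+1$. The row of $K+1$ contains at least two $1$'s, so the matrix is not a permutation matrix. Here $K$ is chosen as the constant of (DRS) and (LI), enlarged if necessary.

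For $\OELA$ I will use the Exel--Laca description of $\K$-theory. Here $\K_0$ and $\K_1$ are the cokernel and kernel of $I-A^t$ acting on the group generated by the finitely supported vectors together with the vectors recording the complementary finite rows, with the unit given by a specific class. Condition (RS) says that for $n\ge K$ the new column $n+1$ equals the vector $c=(c_1,\dots,c_n)$ on the first $n$ rows. Condition (DC) says the diagonal entry $A(n+1,n+1)$ is also $c_{n+1}$. Together they should let me collapse all vertices beyond $K$ by elementary row and column operations. The result is a finite presentation: $\K_*(\OELA)$ is the cokernel and kernel of an explicit $(K+1)\times(K+1)$ integer matrix built from $A_K$ and $c$. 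I also need to track the class of the unit in this presentation.

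For the strong extension groups I will use $\Exts^*(\B)\cong \K^*(C_\B)$ for unital $\B$, where $C_\B$ is the mapping cone of the unital inclusion $\mathbb{C}\to\B$. Combining the UCT with the six-term sequence for the mapping cone, $\Exts^*$ is obtained from the finite presentation of $\K_*$ by quotienting out, or adjoining, the unit class. Concretely, the presentation matrix is enlarged by one row or column corresponding to the unit, and the transpose is taken.

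For the Cuntz--Krieger side I will use $\K_0(\O^{\text{\tiny CK}}_B)=\Coker(I-B^t)$ and $\K_1=\Ker(I-B^t)$ on $\Z^{K+2}$, with unit $[1,\dots,1]$. I will also use the formula for $\Exts^*(\O^{\text{\tiny CK}}_B)$ from \cite{MatSogabe3}, expressed through $I-B$ and the unit vector. The final step is to exhibit explicit $\mathrm{GL}(\Z)$ equivalences between the resulting $(K+2)\times(K+2)$ matrices. In the first direction, the matrix $I-\widehat{A}_{C_K}^t$ should reduce to the $\Exts$-presentation of $\OELA$. In the other direction, the $\K$-presentation of $\OELA$ augmented by the unit should reduce to the $\Exts$-presentation of $\O^{\text{\tiny CK}}_{\widehat{A}_{C_K}}$.

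I expect the main obstacle to be the $\K$-theory reduction for $\OELA$. The difficulty is showing that the infinitely many vertices beyond $K$ contribute exactly the last two rows $[1\cdots1\,1\,1]$ and $[c_1\cdots c_K\,1\,1]$ and the last two columns of $\widehat{A}_{C_K}$, while keeping track of the unit class. I would first try to write $\OELA$, or its $\K$-theory, as an inductive limit over $n\ge K$ of the finite data $(A_n,c)$. I would then use (RS) and (DC) to show that the connecting maps are isomorphisms from stage $K$ on, so that everything is computed at stage $K$.
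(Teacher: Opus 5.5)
Your plan is correct, but it takes a different route from the paper in two respects.

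\textbf{How reciprocality is certified.} The paper does not use the symmetric cross-isomorphism criterion. It uses the one-sided characterization $(\K_0(\widehat{\OELA}),[1]_0,\K_1(\widehat{\OELA}))\cong(\Exts^1(\OELA),\iota_{\OELA}(1),\Exts^0(\OELA))$ together with Kirchberg--Phillips. So it never needs $\Exts^*$ of the Cuntz--Krieger algebra. The price is that it must track the class $\iota_{\OELA}(1)=[-C_K]$ and carry it to the unit vector $[1,\dots,1]$ of $\Coker(I_{K+2}-\widehat{A}_{C_K}^t)$ (Lemma \ref{lem:CokerKerIwAnCn}, Proposition \ref{prop:main1}).

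\textbf{How $\Exts$ is computed.} The paper works analytically: a Fredholm-index map $d_{A_n}\colon\Exts(\wtT_{A_n})\to\Coker(I_n-\widetilde{A}_{C_n})$ adapted from \cite{MaAnalMath2024}, passage to $\OELA$ via $\varprojlim$ and vanishing of $\varprojlim^1$, and $\Exts^0$ read off the six-term sequence. Your mapping-cone/UCT computation replaces all of this. It works provided you keep the degree shift $\Exts^i(\B)=\sqK^{i+1}(C_\B,\mathbb{C})$. With $M=\begin{bmatrix} I_K-A_K^t\\ -C_K^t\end{bmatrix}$ and unit class $1_{K+1}$, you get $\Exts^1(\OELA)\cong\Coker([M\,|\,1_{K+1}]^t)$ and $\Exts^0(\OELA)\cong\Ker([M\,|\,1_{K+1}]^t)$. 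Subtract the last column from the others, then clear with the last row; this shows $[M\,|\,1_{K+1}]^t$ is equivalent to $(I_K-\widetilde{A}_{C_K})\oplus 1$. That is Proposition \ref{prop:mainExtsExtw1}, obtained with no index theory.

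\textbf{Your second matching.} This also goes through. The matrix $\begin{bmatrix} I_{K+2}-\widehat{A}_{C_K}\\ 1\,\cdots\,1\end{bmatrix}$ is equivalent by elementary operations to $M\oplus\begin{bmatrix}0&-1\\1&0\end{bmatrix}$. Hence $\Exts^{i+1}(\OCK_{\widehat{A}_{C_K}})\cong\K_i(\OELA)$ for $i=0,1$.

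\textbf{What each route buys.} Yours is shorter and purely $\K$-theoretic. It costs a second matrix reduction and rests on the uniqueness of reciprocal duals as a black box. The paper's route gives an explicit description of $\Exts(\wtT_{A_n})$ and of the whole six-term sequence. It reuses that machinery for non-(DC) examples such as $\PI$, where the projective limit genuinely matters.

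\textbf{A correction to your expected obstacle.} The vertices beyond $K$ collapse to a single extra generator $[P_{K+1}]$, which is the single row $-C_K^t$ of $M$ (so $M$ is $(K+1)\times K$). They do not produce the two extra rows and columns of $\widehat{A}_{C_K}$. Those come from adjoining the unit class plus one further stabilization. That stabilization is what turns $I_K-\widetilde{A}_{C_K}$, whose entries may be negative, into $I_{K+2}-\widehat{A}_{C_K}^t$ with $\widehat{A}_{C_K}$ a $0$--$1$ matrix.
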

 Since
$\A\cong\widehat{\widehat{\A}}(=\widehat{(\widehat{\A})})$,
the above theorem shows that, for $K=N+1$ and the infinite matrix 
$\widehat{A}$ defined by \eqref{eq:widehatA} 
with a finite irreducible non-permutation matrix $A=[A(i,j)]_{i,j=1}^N$,
we have 
\begin{equation}\label{eq:doubledualA}
 \OCKA \cong \widehat{\OELwA} \cong \OCK_{\widehat{\widehat{A}}_{C_K}}
\end{equation}
and 
\begin{equation}\label{eq:tildeAhatN}
\widehat{\widehat{A}}_{C_K} 
(:= \widehat{(\widehat{A})}_{C_K})
=
\begin{bmatrix}
  &                &  & 1&  1      & 0 \\
  & A              &  & \vdots  & \vdots & \vdots \\ 
  &                &  & 1& \vdots      & \vdots \\
1 & \dots          & 1& 1& 1      & 0      \\
%0 & \dots          & 0& 1& 1      & 0 \\
1 & \dots          & 1& 1& 1      & 1 \\
0 & \dots          & 0& 1& 1      & 1 
\end{bmatrix}
\qquad
(\text{see } \, \, \eqref{eq:tildeBhatK}).
 \end{equation}

 %Let $A=[A(i,j)]_{i,j=1}^N$ be an $N \times N$ irreducible non-permutation matrix with entries in $\{0,1\}.$
As an application, 
%we may know that the pair $\Exts^1(\widehat{\OCKA})$ and $\Extw^1(\widehat{\OCKA})$ of the two extension groups of $\OCKA$ is a computable complete set of isomorphism invariants of $\OCKA$
we simplify our previous result (\cite[Corollary 1.3.]{MatSogabe2}) in the following way. 
\begin{proposition}[{Corollary \ref{cor:InvariantforCK}}] \label{prop:mainprop}
For an $N \times N$ irreducible non-permutation matrix $A$ with entries in $\{0,1\},$
the group
\begin{equation}
(\Z^N/(I_N - A^t)\Z^N)\oplus(\Z^N/
\begin{bmatrix}
&     & 1 \\
& I_N - A^t & \vdots \\
&     & 1
\end{bmatrix} \Z^{N+1})
\end{equation}
is a complete invariant of the isomorphism class of the simple Cuntz--Krieger algebra $\OCKA$.
\end{proposition}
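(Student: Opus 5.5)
Since simple Cuntz--Krieger algebras are unital Kirchberg algebras in the UCT class, they are classified up to isomorphism by $(\K_0, [1], \K_1)$, and $\K_1$ is the torsion-free part of $\K_0$ of the same rank. So I will show that the displayed group determines $\K_0(\OCKA)$ together with the position of the unit. Then $\K_1$ is determined as well, and Kirchberg--Phillips classification gives the isomorphism class.

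First I compute the two summands. The first is $\Coker(I_N - A^t) = \K_0(\OCKA)$. For the second, the natural source is the reciprocal dual $\widehat{\OCKA} = \OELwA$. By the reciprocality criterion (\cite[Proposition 3.7]{MatSogabe2}), $\K_0(\widehat{\OCKA}) \cong \Exts^1(\OCKA)$. I will use \eqref{eq:doubledualA}, or the $\K$-theory of $\OELwA$ computed in \cite{MatSogabe3}, to identify $\Exts^1(\OCKA)$ with the cokernel of the $N\times(N+1)$ matrix $[\,I_N - A^t \mid \mathbf{1}\,]$. The intuition is that $\Exts^1$ is $\Extw^1 \cong \K^1$ modulo the class coming from the unit. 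Concretely, let $e = \sum_i [p_i]$ be the column of ones, where the $p_i$ are the canonical projections. Then $[1_{\OCKA}]$ is the image of $e$ in $\Coker(I_N - A^t)$, and the second summand is $\Coker(I_N - A^t)/\langle [1]\rangle$.

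The main step is the converse: recover $(\K_0(\OCKA), [1])$, up to isomorphism of pairs, from the abstract group
\[
G = \K_0 \oplus \K_0/\langle [1]\rangle ,
\]
where $\K_0 = \K_0(\OCKA)$ is a finitely generated abelian group. Write $\K_0 = \Z^r \oplus T$. The quotient by the cyclic subgroup $\langle [1] \rangle$ changes the invariant factors in a controlled way. The plan is to show that the multiset of invariant factors of $G$ determines both those of $\K_0$ and the orbit of $[1]$ under $\Aut(\K_0)$.

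The orbit of an element $g$ of a finitely generated abelian group under automorphisms is determined by the isomorphism type of the pair $(\K_0, g)$. By standard results on cyclic quotients, this is in turn determined by the isomorphism classes of $\K_0$ and $\K_0/\langle g\rangle$, together with whether $g$ has finite or infinite order. I would try to prove this primary component by primary component. For a $p$-group, an element's orbit is governed by its height sequence, which is read off from $\K_0$ and $\K_0/\langle g \rangle$. For the free part, the order of $[1]$ can be read off by comparing ranks.

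Decomposing the direct sum $G$ back into its two pieces is not automatic. However, $\K_0$ and $\K_0/\langle[1]\rangle$ are related by a cyclic quotient: their ranks differ by at most one and their torsion interleaves. I expect a counting argument on the invariant factors to pin down both summands. This is where the main obstacle lies, and I expect to need the interlacing property of invariant factors under cyclic quotients. If that route fails, the fallback is to invoke \cite[Corollary 1.3]{MatSogabe2}, which already states that $\Extw$ and $\Exts$ form a complete invariant, and reduce to showing that those groups can be recovered from $G$.
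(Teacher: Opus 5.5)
Your skeleton matches the paper's. You identify the second summand with $\K_0(\OCKA)/\Z[1_{\OCKA}]_0$, show that the direct sum determines its two summands, show that the two summands determine $(\K_0(\OCKA),[1_{\OCKA}]_0)$, and finish by classification. Your remark that $\K_1(\OCKA)\cong\Z^{\rank\K_0(\OCKA)}$ is correct.

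The gap is that the two abelian-group facts carrying the argument are announced but not proved:
(1) $G=\K_0\oplus\K_0/\langle[1]\rangle$ determines its summands. You flag this as the main obstacle; the paper imports it as \cite[Proposition 5.5]{MatSogabe2}.
(2) $\K_0$ and $\K_0/\langle[1]\rangle$ determine the pair $(\K_0,[1])$. You attribute this to ``standard results''; the paper imports it as \cite[Proposition 2.19]{SogabeJFA}.

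Your sketch of (2) does not cover the case where $[1]$ has infinite order, and your $p$-group/height-sequence argument does not reach it. Write $\K_0=\Z^r\oplus T$. Up to automorphism, $[1]=(de_1,t)$ with $d\ge 1$, and its orbit is determined by $d$ together with the $\Aut(T)$-orbit of $t+dT\in T/dT$. Knowing that the order is infinite, or comparing ranks, says nothing about these data. What you must show is that they are recovered from the torsion subgroup $Q$ of $\K_0/\langle[1]\rangle$. This $Q$ contains $T$ with $Q/T\cong\Z/d$ and extension class $\pm(t+dT)$, so the needed statement is the Pontryagin dual of the cyclic-subgroup statement you invoke for $p$-groups.

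Your interlacing idea does settle (1); write it out. Let $r=\rank\K_0$. Then $\rank G$ is $2r$ or $2r-1$ according as $[1]$ has finite or infinite order, so its parity gives both $r$ and the order type. At each prime $p$, let $\lambda$ and $\nu$ be the partitions of the $p$-parts of $\K_0$ and $\K_0/\langle[1]\rangle$. They satisfy $\lambda_1\ge\nu_1\ge\lambda_2\ge\nu_2\ge\cdots$ in the finite-order case, since this is a quotient by a cyclic subgroup. They satisfy $\nu_1\ge\lambda_1\ge\nu_2\ge\lambda_2\ge\cdots$ in the infinite-order case, since then $T\subseteq Q$ with cyclic quotient. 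So sorting the $p$-invariants of $G$ and taking alternate entries, starting according to the parity of $\rank G$, recovers $\lambda$ and $\nu$.

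Your fallback through \cite[Corollary 1.3]{MatSogabe2} could replace (2), because by the UCT both $\Extw^1(\OCKA)$ and $\Exts^1(\OCKA)$ are computable from $\K_0$ and $\K_0/\langle[1]\rangle$. It still requires (1).

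Your direct identification $\Z^N/((I_N-A^t)\Z^N+\Z\mathbf{1})=\Coker(I_N-A^t)/\langle[\mathbf{1}]\rangle$ is correct, and simpler than the paper's route through $\Extw^1(\widehat{\OCKA})$. Drop the remark equating this group with $\Exts^1(\OCKA)\cong\K_0(\widehat{\OCKA})$. That identification is always false, because $\rank\Exts^1(\OCKA)$ exceeds $\rank(\K_0(\OCKA)/\langle[1]\rangle)$ by one. For $\mathcal{O}_2$, for example, the six-term sequence gives $\Exts^1(\mathcal{O}_2)\cong\Z$ while the cokernel is $0$.
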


To prove  Theorem \ref{thm:mainth2}, 
we need to prove that the Exel--Laca algebra $\OELA$ defined by an infinite matrix satisfying 
(DRS) and (LI) has finitely generated $\K$-groups,
and  find a concrete Kirchberg algebra $\A$ such that
\begin{equation}\label{eq:ExtsK}
(\Exts^1(\OELA), [\iota_{\OELA}(1)]_s, \Exts^0(\OELA)
 \cong 
 (\K_0(\A), [1_\A]_0, \K_1(\A)),
\end{equation}
 where 
  $\iota_{\OELA}(1)$ is an extension of $\OELA$  satisfying 
  $\Exts^1(\OELA)/\Z [\iota_{\OELA}(1)]_s \cong \Extw^1(\OELA).$
We  prove that the Kirchberg algebra $\A$ may be taken as a simple Cuntz--Krieger algebra 
$\OCK_{\widehat{A}_{C_K}}.$
The main ingredient of this paper  is to find the underlying matrix $\widehat{A}_{C_K}$
as in \eqref{eq:matrixhatACK}
which defines the Cuntz--Krieger algebra
in terms of the given infinite matrix.

In Section \ref{sect:inductivelimit}, 
we will prove that $\OELA$ may be written as an
inductive limit of Toeplitz like algebras $\wtT_{A_n}$ (Lemma \ref{lem:EL1}).
In Section \ref{sect:Kinductivelimit}, 
we will present a formula to compute the 
$\K$-theory groups $\K_*(\OELA)$, so that we know that   
$\K_*(\OELA)$ is finitely generated  if the infinite matrix $A$ is (DRS) and (LI) (Proposition \ref{prop:KformulaEL}).
Since the first strong extension group $\Exts^1(\OELA)$ is isomorphic to
the strong extension group $\Exts(\OELA)$ defined by the strong equivalence classes
of Busby invariants $\tau:\OELA\rightarrow \calQ(H)$ which are unital
 $*$-monomorphisms from $\OELA$ to the Calkin algebra $\calQ(H)$,  
 the group $\Exts(\OELA)$ is computed following the strategy in \cite{MaAnalMath2024}.
 We first compute $\Exts(\wtT_{A_n})$,
 and then we see that 
 $\Exts(\OELA)$ is written as a projective limt $\varprojlim \Exts(\wtT_{A_n})$.
 A formula for the extension groups $\Ext_*(\OELA)$ is given in Proposition \ref{prop:mainExtsExtw1}. 
 The key ingredient to compute the left hand side of of the triplet \eqref{eq:ExtsK}
 is to compute 
the cyclic six term exact sequence 
\begin{equation}\label{eq:cyclicsixtermforExtOA}
\begin{CD}
\Exts^0(\OELA) @>>> \Extw^0(\OELA) @>>>  \Z \\
@AAA @. @V{\iota_{\OELA}}VV \\
0 @<<<  \Extw^1(\OELA) @<<< \Exts^1(\OELA)
\end{CD}
\end{equation}
in terms of the underlying infinite matrix $A=[A(i,j)]_{i,j\in \N}$
(Theorem \ref{thm:6termforExtOA}),
which will be fullfiled in  Section \ref{sect:cyclicsixtermforExt}. 
By Theorem \ref{thm:6termforExtOA},
we may find a finite matrix $\widetilde{A}_{C_n}$
 having its entries in possibly nagative integers such that 
$\Exts^1(\OELA) \cong \Coker(I_{n+2} - \widetilde{A}_{C_n})$
and
$\Exts^0(\OELA) \cong \Ker(I_{n+2} - \widetilde{A}_{C_n}).$
We will then replace the matrix $\widetilde{A}_{C_n}$
to $A_{C_n}$ having its entries in $\{0,1\}$
and replace it again with $\widehat{A}_{C_n}$ 
to adjust the position in the $\K_0$-group
of the unit of the Cuntz--Krieger  
$\OCK_{\widehat{A}_{C_n}}$.
We finally obtain
a simple Cuntz--Krieger algebra $\A$ satisfying 
\eqref{eq:ExtsK} in a concrete way and show Theorem \ref{thm:main2} (Theorem \ref{thm:mainth2}).
In Section \ref{sect:Examples}, we will find the matrix 
$\widehat{\widehat{A}}_{C_K}$ for a given finite irreducible matrix $A=[A(i,j)]_{i,j=1}^N$
satisfying \eqref{eq:doubledualA}.
If the given matrix $A=[A(i,j)]_{i,j\in \N}$ does not (DRS),
the reciprocal dual of $\OELA$ does not necessarily  realize a simple Cuntz--Krieger algebra.
Such an example is the matrix $P_\infty$ defined in
\eqref{eq:Pinftymatrix} whose Exel--Laca algebra $\OEL_{P_\infty}$
is the Kirchberg algebra $\PI$ 
satisfying $\K_0(\PI) =0, \K_1(\PI) = \Z$ (Section \ref{sect:Examples}).

By the reciprocality, we know that the pair
$(\Exts^1(\widehat{\OCKA}), \Extw^1(\widehat{\OCKA}))$
of the extension groups is a complete set of the isomorphism invariant
of the Cuntz--Krieger algebra $\OCKA$.
By describing the pair in terms of the underlying matrix $A$,
we have a computable complete invariant of the simple Cuntz--Krieger algebra $\OCKA$
(Proposition \ref{prop:InvariantforCK}, Corollary \ref{cor:InvariantforCK}).

Throughout the paper, Kirchberg algebras mean separable unital purely infinite simple nuclear 
$C^*$-algebras satisfying the UCT. 
The set of positive integers is denoted by $\N$.
The identity matrix of size $n$ is denoted by $I_n$.

%%%%%%%%%%%%%%%%%%%%%%%%%%%%%%%%%%%%%%%%%%%%%%%%%%%%%%%%
\section{Preliminaries} \label{sec:Preliminaries}
%%%%%%%%%%%%%%%%%%%%%%%%%%%%%%%%%%%%%%%%%%%%%%%%%
%%%%%%%%%%%%%%%%%%%%%%%%%%%%%%%%%%%%%%%%%%

Let $A=[A(i,j)]_{i,j\in \N}$ be an infinite matrix with entries in $\{0,1\}$.
Throughout the paper, we assume that $A$ has no zero rows or columns.
The unital Exel--Laca algebra $\OELA$ defined by the infinite matrix $A$
was introduced and studied in \cite{EL}.
See \cite{EL}, \cite{ELKtheory} for detail.
The matrix associates to a directed graph $G_A=(V_A, E_A)$
whose vertex set $V_A$ 
is the set of positive integers $\N$ and edge set $E_A$ is the set of pairs $(i,j) \in \N \times \N$
such that $A(i,j) =1$.  
Let us 
define the complementary matrix $A^c=[A^c(i,j)]_{i,j\in \N}$ of $A$ by
$A^c(i,j) = 1 -A(i,j)$ for $i,j \in \N$.
\begin{definition}\label{def:rowfinite}
\begin{enumerate}
\renewcommand{\theenumi}{(\roman{enumi})}
\renewcommand{\labelenumi}{\textup{\theenumi}}
\item A vertex $i$ is said to be {\it row finite} if
 the set $\{ j \in \N \mid A(i,j) =1\}$ is finite.
\item A vertex $i$ is said to be {\it row complementary finite} if
 the set $\{ j \in \N \mid A^c(i,j) =1\}$ is empty or finite.
\end{enumerate}
\end{definition} 
The set of row finite (resp. row complementary finite)
vertices is denoted by $V_{RF}$ (resp. $V_{RCF}$). 
 Note that 
 $
V_{RF} \cap V_{RCF} = \emptyset
$
and 
\[ (V_{RF}\cup V_{RSF})^c=\{i\in\mathbb{N}\mid \{A(i, j)\}_{j\in \N} \;\text{contains infinitely many}\; 0 \;\text{and}\; 1\}.\]

For $i \in \N$, put
\begin{equation}\label{eq:ci}
c_i:=
\begin{cases}
0 & \text{ if } i \text{ is row finite},\\
1 & \text{ if } i \text{ is row complementary finite.}
\end{cases}
\end{equation}

\begin{definition}\label{def:semifinite}
\begin{enumerate}
\renewcommand{\theenumi}{(\roman{enumi})}
\renewcommand{\labelenumi}{\textup{\theenumi}}
\item A matrix $A$ is  said to be {\it row semi-finite}\/ written as (RSF) for short if
 $V_A = V_{RF} \cup V_{RCF}$ and $V_{RCF} \ne \emptyset$.
\item 
A row semi-finite matrix  $A$ is  said to be {\it right stable}\/ 
written as (RS) for short
 if there exists $K \in \N$ such that
 for any $n \ge K$ the condition 
$A(i,n+1) = c_i$ holds for all $i=1,\dots,n.$
\item 
A right stable  matrix  $A$ is  said to be {\it diagonally right stable}\/ 
written as (DRS) for short
 if there exists $K \in \N$ such that
 for any $n \ge K$ the condition 
$A(i,n+1) = c_i$ holds for all $i=1,\dots,n+1.$
\end{enumerate}
\end{definition} 
The condition $A(n+1,n+1) = c_{n+1}$ for all $n \ge K$
is said to be {\it diagonal condition}\/ written as (DC) for short.
Hence (DRS) $\Longleftrightarrow$ (RS) $+$ (DC).
%%%%%%%%%%%%%%%%%%%%%%%%%%%%%%%%%%%%%%%%%%%%%%%%%%%%%%

If $A$ is (RS), then it is of the form
\begin{equation*}
\left[
\begin{array}{ccccccc}
A(1,1)  & \dots & A(1,K) &  c_1     &  c_1     & c_1  & \dots \\ 
A(2,1)  & \dots & A(2,K) &  c_2     &  c_2     & c_2  & \dots \\ 
\vdots  &       & \vdots &\vdots    &\vdots    &\vdots& \dots \\ 
A(K,1)  & \dots & A(K,K) & c_K      & c_K      & c_K  & \dots \\ 
A(K+1,1)& \dots &A(K+1,K)&A(K+1,K+1)&c_{K+1}   &c_{K+1}& \dots \\ 
A(K+2,1)& \dots &A(K+2,K)&A(K+2,K+1)&A(K+2,K+2)&c_{K+2}& \dots \\ 
\vdots &       & \vdots & \vdots    & \vdots   &\vdots  &\\ 
\end{array}
\right]
\end{equation*}
Furthermore if $A$ is (DRS), then the above diagonal entries
$A(n+1,n+1)$  are $c_{n+1}$ for $n \ge K$. 

The condition (RS) is defined under the condition (RSF).
Hence if the matrix $A$ is (RS), then it is (RSF),
and there exists $i \in \N$ such that $c_i=1$.
 One may then take the number $K$ in Definition \ref{def:semifinite} (ii)
 such that $V_{RCF} \cap \{1,\dots,K\} \ne \emptyset$,
 if we choose $K$ large enough.
%%%%%%%%%%%%%%%%%%%%%%
%\begin{remark}
%Assume that an infinite matrix $A=[A(i,j)]_{i,j\in \N}$ is (RSF).
%Let $S_i, i \in \N$ be the canonical generating partial isometries of 
%the  the Exel--Laca algebra $\OELA$ for the matrix $A$.
%Since $V_{RCF} \ne \emptyset$, the Exel--Laca algebra $\OELA$ has an unit.
%For a vertex $i \in \N$, we have
%\begin{enumerate}
%\renewcommand{\theenumi}{(\roman{enumi})}
%\renewcommand{\labelenumi}{\textup{\theenumi}}
%\item
%$i \in V_{RF}$ if and only if 
%there exists a finite subset $F_i\subset \N$
%such that $S_i^* S_i = \sum_{j \in F_i} S_j S_j^*.$
%\item
%$i \in V_{RCF}$ if and only if 
%there exists a finite subset $F^c_i\subset \N$
%such that $S_i^* S_i + \sum_{j \in F^c_i} A^c(i,j) S_j S_j^* = 1.$
%\end{enumerate}
%\end{remark}
In what follows,
the infinite matrix  
$A$ is assumed to be (RS)
for the number $K$ in Definition \ref{def:semifinite} (ii)
such that $c_i =1$ for some $i \in \N$ satisfying $1 \le i \le K$.
For $n \ge K$, the $n\times n$ submatrix $A_n$ of $A$ is defined by
\begin{equation*}
A_n(i,j) = A(i,j), \qquad i,j \in \{1,\dots,n\}. 
\end{equation*}
\begin{definition}
Keep the above situation.
The matrix $A$ is said to be {\it locally irreducible}\/ written as (LI) for short
if  the submatrices
$A_n$ are all irreducible for all $n\ge K$.
\end{definition}
%%%%%%%%%%%%%%%%%%%%%%%%%%%%%%%%%%%%%%%%%%%%%%%%%%%%%%

\begin{example}\label{ex:threetypes}

{\bf 1.}
Let $I_\infty$ be the infinite matrix whose entries are all $1'$s
such as 
\begin{equation}\label{eq:Iinftymatrix}
I_\infty =
\begin{bmatrix}
1 & 1 & 1 &\cdots &   & \\
1 & 1 & 1 &\cdots &   & \\
1 & 1 & 1 &\cdots &   & \\
\vdots  &\vdots &\vdots& &   & \\            
  &   &   &   &    &             
\end{bmatrix}.
\end{equation}
All the vertices are row complementary finite.
Hence the matrix $I_\infty$ is (DRS) and (LI)
with $K=1$.

{\bf 2.}
Let $A=[A(i,j)_{i,j=1}^N$ be an $N\times N$ 
irreducible non-permutation matrix with entries in $\{0,1\}$.
As in \cite{MatSogabe3}, the reciprocal dual 
$\widehat{\OCKA}$ of the Cuntz--Krieger algebra $\OCKA$ is 
a unital Kirchberg algebra 
defined by the Exel--Laca algebra 
$\OELwA$ 
for the  infinite matrix $\widehat{A}$
defined by \eqref{eq:widehatA}.
%%%%%%%%%%%%%%%%%
%\begin{equation}\label{eq:widehatA}
%\widehat{A}=
%\left[
%\begin{array}{ccccccc}
%&                &1      &0     &0     &\cdots \\
%&\text{\Huge A}^t&\vdots&\vdots&\vdots&\vdots \\
%&                &1      &0     &0     &\cdots \\
%1&\dots          &1      &1     &1     &\cdots \\
%1&\dots          &1      &0     &0     &\cdots \\
%1&\dots          &1      &0     &0     &\cdots \\
%\vdots &\dots    &\vdots &\vdots&\vdots&\vdots \\
%\end{array}
%\right]
%\end{equation}
The vertex $N+1\in V_{\widehat{A}}$ is row complementary finite, the other vertices
are all row finite. 
Hence the matrix $\widehat{A}$ is (DRS) and (IL)
with $K=N+1$.

{\bf 3.}
Let $P_\infty$ be the infinite matrix such as
\begin{equation}\label{eq:Pinftymatrix} 
P_\infty =
\begin{bmatrix}
1 & 0 & 1 & 1 & 1 & \cdots &   & \\
0 & 1 & 1 & 1 & 1 & \cdots &   & \\
1 & 0 & 1 & 0 & 0 & \cdots &   & \\
0 & 1 & 0 & 1 & 0 & \cdots &   & \\
0 & 0 & 1 & 0 & 1 &        &   & \\
  &\ddots &\ddots &\ddots &\ddots&\ddots &   & \\            
  &   &   &   &   &  &   &             
\end{bmatrix}
\quad (\text{cf. } \cite[Example \, 4.2]{RaebSzy})
\end{equation}
Both of the first and second vertices are row complementary finite, 
and the other vertices are all 
row finite. 
Hence $P_\infty$ is (RS) and (LI)
with $K=2$, but  not (DRS).
As in \cite[Example 4.2]{RaebSzy},
the Exel--Laca algebra $\O^{\text{\tiny EL}}_{P_\infty}$
is  the  unital Kirchberg algebra $\PI$ satisfying
$\K_0(\PI) =0, \K_1(\PI) =\Z$.

{\bf 4. }The matrix
$$
\begin{bmatrix}
 1 & 0 & 0 & 0 & 0 & \cdots \\
 0 & 1 & 0 & 0 & 0 & \cdots \\
 0 & 0 & 1 & 1 & 1 & \cdots \\
 0 & 0 & 1 & 1 & 1 & \cdots \\
 0 & 0 & 1 & 1 & 1 & \cdots \\
\vdots& \vdots& \vdots& \vdots & \vdots & \vdots & \ddots 
\end{bmatrix}
$$
is (DRS), but not (LI).
\end{example}
\begin{lemma}\label{lem:EL=>Kirchberg}
If $A$ is (RS) and (LI), then
the Exel--Laca algebra $\OELA$ is a Kirchberg algebra.
\end{lemma}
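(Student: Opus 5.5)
We will verify the standard sufficient conditions for an Exel--Laca algebra to be a Kirchberg algebra. Recall from \cite{EL} that $\OELA$ is always separable and nuclear and satisfies the UCT, since it is a groupoid (or partial crossed product) $C^*$-algebra of an amenable groupoid. Thus only three properties remain: $\OELA$ is unital, simple, and purely infinite. Unitality follows from (RSF): since $V_{RCF}\neq\emptyset$, we may pick $i$ with $c_i=1$. The set $F=\{j\mid A(i,j)=0\}$ is then finite, and the relations in \cite{EL} give $1=S_i^*S_i+\sum_{j\in F}S_jS_j^*$. For simplicity and pure infiniteness we will use the criterion of Exel--Laca \cite{EL}: if $A$ is irreducible (its graph $G_A$ is transitive) and satisfies condition (I), then $\OELA$ is simple. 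If moreover some cycle has an exit, or in the transitive infinite case automatically, then $\OELA$ is purely infinite.

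The first main step is irreducibility of $A$, derived from (LI). Take $i,j\in\N$ and choose $n\ge\max(K,i,j)$. By (LI) the matrix $A_n$ is irreducible, so there is a path from $i$ to $j$ inside $\{1,\dots,n\}$, which is also a path in $G_A$. Hence $A$ is irreducible.

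The second main step is condition (I), which we expect to be the main obstacle. Condition (I) requires that for each finite set of vertices, the shift space of infinite admissible paths has no isolated points, in the Exel--Laca sense that includes the "boundary" points. Our plan is to show that $G_A$ has a vertex emitting infinitely many edges, and that each finite cycle has an exit. Since some $c_{i_0}=1$ with $i_0\le K$, vertex $i_0$ emits edges to all but finitely many vertices. Fix any vertex $v$. By irreducibility $v$ reaches $i_0$, and $i_0$ has at least two distinct edges leading to vertices, each of which returns to $i_0$. This gives two distinct return paths from $i_0$ to itself that are not prefixes of one another. Consequently every infinite path passing through $i_0$ can be perturbed, and no infinite path is the unique extension of a finite path. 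Every infinite path starting at $v$ must meet vertices that reach $i_0$, and either it returns to the branching region or it escapes to infinity. In both cases the cylinder sets around it contain other paths, which gives condition (I) as formulated in \cite{EL} (equivalently, no cycle without exit, since $A$ is not a permutation matrix).

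We handle pure infiniteness in the same way. For every vertex $j$, irreducibility gives a loop through $i_0$ based at $j$, and $i_0$ has infinitely many outgoing edges. Hence every loop has an exit, and every projection $S_jS_j^*$ is infinite. Then, as in \cite[Section~16]{EL}, or via the graph $C^*$-algebra picture (Kumjian--Pask--Raeburn: transitive graph with condition (L) and a cycle gives a purely infinite simple algebra), $\OELA$ is purely infinite simple. Combined with the first paragraph, this shows that $\OELA$ is a Kirchberg algebra. We note that (RS) is used only to guarantee the existence of $K$ with a row complementary finite vertex among $\{1,\dots,K\}$, so that (LI) makes sense.
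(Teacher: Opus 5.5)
Your proof is correct and follows the paper's route: you get unitality from $V_{RCF}\neq\emptyset$ and irreducibility of $A$ from (LI). You then use that every vertex reaches a row complementary finite vertex, which emits infinitely many edges, so every loop has an exit and \cite{EL} gives simplicity and pure infiniteness. Your explicit unit relation and your remarks on separability, nuclearity and the UCT fill in points the paper leaves implicit. The heuristic discussion of condition (I) and the Kumjian--Pask--Raeburn aside are unnecessary; the latter would also need care, since $\OELA$ is not literally the graph algebra of $G_A$ when $A$ has infinite rows.
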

\begin{proof}
Since $A$ is (RSF), the vertex set $V_{RCF}$ is not empty so that
the $C^*$-algebra $\OELA$ is unital.
By (LI), it is easy to see that the matrix $A$ is irreducible and
for any $j\in \N$, there exists a path from $j$ to a row complementary vertex.
This shows that any loop has an exit, so the $C^*$-algebra $\OELA$ 
is purely infinite and simple by \cite{EL} (cf. \cite[Example 4.4.5]{RodSto}, \cite{Szy2001}, etc.).
\end{proof}

%%%%%%%%%%%%%%%%%%%%%%%%%%%%%%%%%%%%%%%%%%%%%%%%%%%%%%%%%%%%%%%%%%%%%%%
\section{$\OELA= \varinjlim \wtT_{A_n}$}\label{sect:inductivelimit}
%%%%%%%%%%%%%%%%%%%%%%%%%%%%%%%%%%%%%%%%%%%%%%%%%%%%%%%%%%%%%%%%%%%%%%%%%
In this section, we will express the Exel--Laca algebra 
$\OELA$ as an inductive limit $C^*$-algebra of a sequence.
$\wtT_{A_n}, n \ge K$
of Toeplitz like algebras.
\begin{lemma}\label{lem:EL1}
Assume that an infinite matrix $A=[A(i,j)]_{i,j\in \N}$
 is (RS) and (LI).
Let $S_i, i \in \N$ be the canonical generating partial isometries of 
 the Exel--Laca algebra $\OELA$ for the matrix $A$.
Let $K \in \N$ be the positive number in Definition \ref{def:semifinite} (ii).
Then for any $n \in \N$ with $n \ge K$
there exists a sequence of projections
$P_{n+1}, n\in \N$ in $\OELA$ satisfying
\begin{align}
\sum_{j=1}^n S_j S_j^* + P_{n+1} =1, & \qquad 
S_i^* S_i = \sum_{j=1}^n A(i,j) S_j S_j^* + c_i P_{n+1}, \quad i=1,\dots, n, \label{eq:EL11}\\
P_n = &  S_n S_n^* + P_{n+1}. \label{eq:Pn}
\end{align} 
\end{lemma}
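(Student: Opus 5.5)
The natural choice is $P_{n+1} := 1 - \sum_{j=1}^n S_j S_j^*$ for $n \ge K$. The plan is to check the three relations directly from the Exel--Laca relations for the generators $S_i$:
\begin{itemize}
\item the ranges $S_jS_j^*$ are mutually orthogonal projections;
\item for a finite set $F\subset\N$ on which the row pattern is eventually constant, $S_i^*S_i$ is a finite sum of range projections, or $1$ minus such a sum.
\end{itemize}

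\textbf{$P_{n+1}$ is a projection.} The projections $S_jS_j^*$, $j\in\N$, are mutually orthogonal in $\OELA$. Hence $\sum_{j=1}^n S_jS_j^*$ is a projection, and so is $P_{n+1}=1-\sum_{j=1}^n S_jS_j^*$, using that $\OELA$ is unital by (RSF). The first identity in \eqref{eq:EL11} then holds by definition. Relation \eqref{eq:Pn} is immediate:
\[
P_n - P_{n+1} = \sum_{j=1}^n S_jS_j^* - \sum_{j=1}^{n-1} S_jS_j^* = S_nS_n^*.
\]
This requires $n-1\ge K$, or else we simply define $P_n$ by the same formula for all $n$.

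\textbf{The main relation.} The substantive point is $S_i^*S_i = \sum_{j=1}^n A(i,j)S_jS_j^* + c_iP_{n+1}$ for $i\le n$. I will use the Exel--Laca relation in its unital form. For row finite $i$, we have $S_i^*S_i=\sum_{j} A(i,j)S_jS_j^*$, a finite sum. For row complementary finite $i$, we have $S_i^*S_i = 1 - \sum_j A^c(i,j)S_jS_j^*$, again a finite sum.

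\textbf{Applying (RS).} Fix $i\le n$ with $n\ge K$. Then $A(i,m)=c_i$ for every $m\ge n+1$, since $m-1\ge n\ge K$ and $i\le m-1$.
\begin{itemize}
\item If $c_i=0$, all nonzero entries of row $i$ lie among $j\le n$. So $S_i^*S_i=\sum_{j=1}^n A(i,j)S_jS_j^*$, which matches the claim with $c_iP_{n+1}=0$.
\item If $c_i=1$, all zeros of row $i$ lie among $j\le n$. So
\[
S_i^*S_i = 1-\sum_{j=1}^n (1-A(i,j))S_jS_j^* = \sum_{j=1}^n A(i,j)S_jS_j^* + \Bigl(1-\sum_{j=1}^nS_jS_j^*\Bigr),
\]
which is the claim.
\end{itemize}

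\textbf{Expected obstacle.} The only delicate step is justifying the relation $S_i^*S_i = 1-\sum_j A^c(i,j)S_jS_j^*$ for row complementary finite $i$ inside the unital Exel--Laca algebra. In \cite{EL} the relation is phrased with products over finite sets $X,Y$ of $S_x^*S_x$ and $1-S_y^*S_y$. I would derive the needed identity by taking $X=\{i\}$ and $Y=\emptyset$ and invoking the equivalent formulation via the unitization (cf. \cite{EL}, \cite{RaebSzy}). The existence of a row complementary finite vertex ensures the unit lies in $\OELA$. Hypothesis (LI) is not needed for this lemma beyond guaranteeing that $A$ has no zero rows.
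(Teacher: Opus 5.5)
Your argument is correct and is essentially the paper's proof. It uses the same choice $P_{n+1}=1-\sum_{j=1}^n S_jS_j^*$ and the same split into $c_i=0$ and $c_i=1$, with (RS) confining the support (resp.\ the zero set) of row $i$ to $\{1,\dots,n\}$ before applying the Exel--Laca relation.

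One thing to fix is in your last paragraph. For row complementary finite $i$, the instance you need is $X=\emptyset$, $Y=\{i\}$, which gives $1-S_i^*S_i=\sum_j A^c(i,j)S_jS_j^*$; this is what the paper uses. The instance $X=\{i\}$, $Y=\emptyset$ is not imposed for such $i$, since $j\mapsto A(i,j)$ is not finitely supported; it is the one you need only in the row finite case.
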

\begin{proof}
Put $P_{n+1} = 1 -\sum_{j=1}^n S_j S_j^*$ which satisfies \eqref{eq:Pn}.
%Now the matrix $A$ is (WUT), so that one may take $N \in \N$ as in the definition of (WUT).
Assume $n \ge K$ and $i \in \{ 1,\dots, n\}$.

If $i \in V_{RF}$,
we have
$A(i,j) = c_i =0$ for $j \ge n+1$. 
Recall the following set and the universal relation (cf. \cite[eq. (1.3)]{EL}):
\[S(X, Y):=\{k\in\mathbb{N}\mid A(x, k)=1\;\textup{for all}\; x\in X,\; A(y, k)=0\;\textup{for all}\; y\in Y\},\]
\begin{equation} 
\left(\prod_{x\in X}S_x^*S_x\right)\left( \prod_{y\in Y}(1-S_yS_y^*)\right)=\sum_{k\in S(X, Y)}S_kS_k^*, \label{EL4}
\end{equation}
 for finite subsets $X, Y \subset \N.$
Note that our matrix $A$ provides a unital $\mathcal{O}_A^{EL}$ (see Lemma \ref{lem:EL=>Kirchberg}).
Applying the relation $\eqref{EL4}$ for $X=\{i\},\; Y=\emptyset$,
one has
\begin{equation*}
S_i^* S_i = \sum_{j=1}^n A(i,j) S_j S_j^*
= \sum_{j=1}^n A(i,j) S_j S_j^* + c_i P_{n+1}.
\end{equation*} 
If $i \in V_{RCF}$, 
we have 
$A(i,j) = c_i =1$ for $j \ge n+1$.
Applying the relation $\eqref{EL4}$ for $X=\emptyset, Y=\{i\}$,

\begin{equation*}
S_i^* S_i =  \sum_{j=1}^n A(i,j) S_j S_j^* + (1 - \sum_{j=1}^n S_j S_j^*) 
 = \sum_{j=1}^n A(i,j) S_j S_j^* + c_i P_{n+1}.
\end{equation*} 
\end{proof}

%%%%%%%%%%%%%%%%%%%%%%%%%%%%
We are assuming that a matrix $A$ is (RS) and (LI).
 Denote by $\wtTAn$
the unital $C^*$-subalgebra $C^*(S_1,\dots, S_n, P_{n+1})$ of $\OELA$ generated by 
$S_1, \dots, S_n $ and $P_{n+1}$.
 Note that $\sum_{i=1}^nS_i S_i^*+P_{n+1}=1_{\OELA}=1_{\tilde{\mathcal{T}}_{A_n}}.$
In the expression \eqref{eq:EL11}, 
the relation 
\begin{equation}\label{eq:R1}
S_i^* S_i = \sum_{j=1}^n A(i,j) S_j S_j^* + c_i P_{n+1} %, \qquad i=1,\dots,n
\end{equation}
holds for $1 \le i \le n$. On the other hand, we have
\begin{equation}\label{eq:R2}
S_i^* S_i = \sum_{j=1}^{n+1} A(i,j)  S_j S_j^* + c_i P_{n+2}. 
\end{equation}
Since $c_i = A(i,n+1)$ for $ 1 \le i \le n$,
 the right-hand side of \eqref{eq:R2} is
\begin{align*}
  \sum_{j=1}^{n}A(i,j)  S_j S_j^* + A(i, n+1) S_{n+1} S_{n+1}^*+ c_i P_{n+2} %\\
% = &  \sum_{j=1}^{n}A(i,j)  S_j S_j^* + c_i( S_{n+1} S_{n+1}^*+  P_{n+2}) \\
 =   \sum_{j=1}^{n} A(i,j) S_j S_j^* + c_i P_{n+1}
\end{align*}
which is the right hand side of \eqref{eq:R1}.
Hence, the relations \eqref{eq:EL11} are compatible  with the natural embedding
 $\wtT_{A_n} \hookrightarrow \wtT_{A_{n+1}}.$

Let us denote by $\calK(H)$ the $C^*$-algebra 
of compact operators on a separable infinite dimensional Hilbert space $H$. 
\begin{lemma}\label{lem:EL1}
Assume that a matrix $A$ is (RS) and (LI).
\begin{enumerate}
\renewcommand{\theenumi}{(\roman{enumi})}
\renewcommand{\labelenumi}{\textup{\theenumi}}
\item
There exists a natural unital increasing sequence, 
\begin{equation*}
\wtT_{A_n} \hookrightarrow \wtT_{A_{n+1}} \hookrightarrow \dots  
\hookrightarrow \OELA\quad (n\geq K)
\end{equation*}
of $C^*$-subalgebras of $\OELA$ such that 
the union $\cup_{n=K}^\infty \wtT_{A_n}$ is dense in $\OELA$.
Hence  the Exel--Laca algebra $\OELA$ is  the $C^*$-algebra of an inductive limit
$\varinjlim{\wtT_{A_n}}$.
\item 
Let $\OCK_{A_n}$ denote the Cuntz--Krieger algebra defined by the irreducible finite matrix $A_n$
for $n \ge K$.
There exists a short exact sequence:
\begin{equation*}
0 
\longrightarrow \calK(H) 
\longrightarrow \wtT_{A_n}
\longrightarrow \OCK_{A_n}
\longrightarrow 0
\end{equation*}
such that 
$\calK(H)$ is a unique essential ideal of $\wtT_{A_n}$.
\item
The $C^*$-subalgebra 
$\wtT_{A_n}$ for $n \ge K$
is the graph algebra $C^*(\widetilde{G}_{A_n})$
defined by a finite directed graph
$\widetilde{G}_{A_n}$  with a sink.
\end{enumerate}
\end{lemma}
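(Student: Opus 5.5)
For (i), the plan is to use the compatibility of relations \eqref{eq:EL11} established just before the statement. Since $P_{n+1} = S_{n+1}S_{n+1}^* + P_{n+2}$ by \eqref{eq:Pn}, the generators $S_1,\dots,S_n,P_{n+1}$ of $\wtT_{A_n}$ lie in $\wtT_{A_{n+1}}$. This gives unital inclusions $\wtT_{A_n}\subset\wtT_{A_{n+1}}\subset\OELA$, all sharing the unit $1_{\OELA}$. The union contains every $S_i$, and $\OELA$ is generated as a unital $C^*$-algebra by $\{S_i\}_{i\in\N}$. Hence the union is dense, and $\OELA=\varinjlim\wtT_{A_n}$.

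For (iii), I would construct the finite graph $\widetilde{G}_{A_n}$ directly. Its vertices are $\{1,\dots,n\}\cup\{s\}$, where $s$ is a sink. For each $i\le n$ with $A(i,j)=1$ and $j\le n$, put an edge from $j$ to $i$ (or the reverse, according to the chosen convention), and add an edge from $s$ to $i$ exactly when $c_i=1$.

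The vertex projections are the ranges $S_jS_j^*$ for $j\le n$ together with $P_{n+1}$. The edge partial isometries are $S_iS_jS_j^*$ and $S_iP_{n+1}$. I will check the Cuntz--Krieger relations of this graph from \eqref{eq:EL11}: the source projection of $S_i$ decomposes as $\sum_j A(i,j)S_jS_j^*+c_iP_{n+1}$. The vertex $s$ is a sink in this picture because no $S_j$ with $j\le n$ starts inside $P_{n+1}$ as a "row".

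Universality of the graph algebra gives a surjection $C^*(\widetilde G_{A_n})\to\wtT_{A_n}$. To get injectivity I would use the Cuntz--Krieger uniqueness theorem for graph algebras. Its hypothesis is condition (L), which holds here: by (LI), $A_n$ is irreducible, and because some $c_i=1$ with $i\le K$, every cycle has an exit towards the sink. Every vertex projection is nonzero in $\OELA$; in particular $P_{n+1}\neq0$, since $S_{n+1}S_{n+1}^*\le P_{n+1}$. This yields the isomorphism $\wtT_{A_n}\cong C^*(\widetilde G_{A_n})$.

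For (ii), I would use the standard structure of graph algebras with a single sink that is reached from an irreducible, non-degenerate part. The ideal generated by the sink projection $P_{n+1}$ is spanned by $S_\mu P_{n+1}S_\nu^*$ over finite paths $\mu,\nu$ ending at the sink. It is isomorphic to $\calK(\ell^2(\text{paths ending at } s))$, and this path space is infinite because $A_n$ is irreducible and not a permutation, so there are infinitely many paths. Hence the ideal is $\calK(H)$.

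Setting $P_{n+1}=0$ in the quotient turns the relations into $\sum_{j\le n}S_jS_j^*=1$ and $S_i^*S_i=\sum_j A(i,j)S_jS_j^*$. These are exactly the Cuntz--Krieger relations for $A_n$, and since $A_n$ is irreducible and non-permutation it satisfies condition (I), so the quotient is $\OCK_{A_n}$.

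Essentiality, and uniqueness of the ideal, follow from the same facts. The sink is reachable from every vertex, so every nonzero ideal contains $P_{n+1}$, by the gauge-invariant ideal structure together with condition (L). Every nonzero ideal therefore contains $\calK(H)$, which makes $\calK(H)$ essential and the unique minimal essential ideal.

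The main obstacle I expect is the injectivity in (iii): one must check carefully that condition (L) holds and that no extra relations are imposed by the ambient algebra $\OELA$. The non-permutation property of $A_n$ needed in (ii) is a related point. It should follow from (LI) together with the existence of some $c_i=1$, since the row of such an $i$ must meet column $n+1$, forcing branching in the graph.
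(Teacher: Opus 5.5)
Your route is the paper's: the same nested subalgebras in (i), the same graph $\widetilde{G}_{A_n}$ with vertex projections $S_jS_j^*$, $P_{n+1}$ and edge partial isometries $S_iS_jS_j^*$, $S_iP_{n+1}$ in (iii), and the ideal generated by $P_{n+1}$ in (ii). You give more detail than the paper, which just says ``it is easy to see''. For injectivity in (iii) you use the Cuntz--Krieger uniqueness theorem, with condition (L) and $P_{n+1}\ge S_{n+1}S_{n+1}^*\neq 0$. For minimality and essentiality of $\calK(H)$ you show every nonzero ideal contains a vertex projection, hence $P_{n+1}$ by reachability of the sink.

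Two small corrections there. First, the step ``a nonzero ideal contains a vertex projection'' comes from the Cuntz--Krieger uniqueness theorem applied to the quotient map, not from the gauge-invariant ideal structure; without condition (K), ideals need not be gauge-invariant. Second, with the paper's convention $s_e^*s_e=p_{r(e)}$, the edges run $v_i\to v_j$ and $v_i\to\tilde v_{n+1}$. With your orientation the extra vertex emits edges and is not a sink.

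The one step that fails as written is your claim that $A_n$ is a non-permutation matrix for every $n\ge K$. Your justification, that the row of some $i$ with $c_i=1$ meets column $n+1$, concerns a column outside $A_n$. It gives branching in $\widetilde{G}_{A_n}$, or shows $A_{n+1}$ is non-permutation, but says nothing about $A_n$. The claim already fails in the paper's Example 1. For $A=I_\infty$ with $K=n=1$, $A_1=[1]$ and $S_1$ is a non-unitary isometry. So $\wtT_{A_1}$ is the Toeplitz algebra and the quotient is $\OCK_{A_1}=C(\bbT)$. Here $A_1$ fails condition (I), so your identification of the quotient via condition (I) does not apply.

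The repair is easy. The ideal generated by the sink projection is gauge-invariant, so the gauge-invariant uniqueness theorem identifies the quotient with the universal algebra $\OCK_{A_n}$ without condition (I). The path space is infinite as soon as $A_n$ has a cycle, which irreducibility alone gives. Alternatively, restrict to $n\ge K+1$. Take $i_0\le K$ with $c_{i_0}=1$; row $i_0$ of $A_K$ is nonzero by irreducibility, and $A(i_0,K+1)=c_{i_0}=1$. So $A_n$ is non-permutation, and you may replace $K$ by $K+1$.

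The paper's own step ``as $\OCK_{A_n}$ is simple'' rests on the same tacit assumption. Your essentiality argument does not: it uses only condition (L) of $\widetilde{G}_{A_n}$ and reachability of the sink. It shows $\calK(H)$ is the smallest nonzero ideal even in the Toeplitz case.
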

\begin{proof}
Since (i) is clear, It suffices to prove (ii) and (iii).

(ii)
Take $n \ge K$. 
For a row complementary  finite vertex $i$ with $1 \le i \le K$ (that exists by the condition (RS)), 
we have $c_i =1$ so that  
 the relation \eqref{eq:EL11} shows that 
 $S_i P_{n+1}S_i^* \ne 0$.
 It is easy to see that the $C^*$-subalgebra of $\wtT_{A_n}$ generated by the projection
$P_{n+1}$ becomes an ideal of $\wtT_{A_n}$ and
is isomorphic to the $C^*$-algebra $\calK(H)$.
As $\OCK_{A_n}$ is simple, 
 the ideal generated by $P_{n+1}$ is the unique essential ideal of $\wtT_{A_n}$.

(iii)
Let us define a finite directed graph
$\widetilde{G}_{A_n} = (\widetilde{V}_{A_n}, \widetilde{E}_{A_n})$  with a sink
in the following way.
The vertex set $\widetilde{V}_{A_n}$ is denoted by 
$\{v_1, \dots, v_n, \tilde{v}_{n+1}\}$.
Define an edge $e_{(i,j)}$ for $i,j =1,\dots,n$ 
if $A(i,j) =1$ with source vertex $v_i$ and terget vertex $v_j$.
If $c_i =1$, then define an additional edge from $v_i$ to
$\tilde{v}_{n+1}$. The set of such edges is denoted by
$\widetilde{E}_{A_n}$, and the resulting finite directed graph  
$(\widetilde{V}_{A_n}, \widetilde{E}_{A_n})$ is denoted by $\widetilde{G}_{A_n}$.
Put
\begin{equation*}
\begin{cases}
s_{(i,j)}& := S_i S_j S_j^*, \quad \text{ if } A(i,j) =1  \text{ for }  i,j =1,\dots,n, \\
p_j & : = S_j S_j^*, \qquad \text{ for } j=1,\dots, n, \\
p_{n+1} & : = P_{n+1}, \\
s_{(i,n+1)} & : = S_i P_{n+1} \quad \, \, \,  \text{ if } c_i =1, \, \, i=1,\dots,n.
\end{cases}
\end{equation*}
It is easy to see that 
the $C^*$-algebra $C^*(s_{(i,j)}, p_j \mid i =1,\dots,n,\, \, j =1,\dots,n+1)$
generated by 
$s_{(i,j)}, p_j \,\,  i =1,\dots,n,\, j =1,\dots,n+1$
coincides with $\wtT_{A_n}$ and 
is the graph algebra $C^*(\widetilde{G}_{A_n})$
of the graph $\widetilde{G}_{A_n}$
with a sink $\tilde{v}_{n+1}$. 
\end{proof}
\begin{remark}
 The algebraic structure of the $C^*$-subalgebra $\wtT_{A_n}$ 
is not determined by only the matrix $A_n$.
The algebra $\wtT_{A_n}$ is determined by
the operator relations  \eqref{eq:EL11} which needs the information of the numbers
$c_i, i=1,\dots,n$ determined by the full entries of 
the matrix $A = [A(i,j)]_{i,j\in \N}$. 
\end{remark}

%%%%%%%%%%%%%%%%%%%%%%%%%%%%%%%%%%%%%%%%%%%%%%%%%%%%%%%%%%%%%%%%%%%%%%%%%%%%%%%%%%
\section{$\K_*(\OELA)= \varinjlim \K_*(\wtT_{A_n})$}\label{sect:Kinductivelimit}
%%%%%%%%%%%%%%%%%%%%%%%%%%%%%%%%%%%%%%%%%%%%%%%%%%%%%%%%%%%%%%%%%%%%%%%%%%%%%%%%%
Under the assumtion that the matrix $A$ is (RS) and (LI),
we take and fix $K\in \N$ in Definition \ref{def:semifinite} (ii)
such that $c_i =1$ for some $i\in \{1,\dots,K\}$.
For $n \ge K$, we define the $n\times 1$-matrix
\begin{equation}\label{eq:Cn}
C_n := 
\begin{bmatrix}
c_1\\
\vdots\\
c_n
\end{bmatrix}.
\end{equation} 
By using Raeburn--Szyma{\'n}sky \cite{RaebSzy} 
and
Drinen--Tomforde's $\K$-group formulas \cite{DrinenTom} for graph $C^*$-algebras of 
finite directed graphs with sinks (cf. \cite{Szy2002}),
we have the following lemma.
\begin{lemma}[{cf.  \cite[Theorem 4.1]{RaebSzy}, \cite[Theorem 3.1]{DrinenTom}}]\label{lem:EL1}
Assume that a matrix $A=[A(i,j)]_{i,j\in \N}$ is (RS) and (LI).
For $n \ge K$, there exist isomorphisms 
\begin{enumerate}
\renewcommand{\theenumi}{(\roman{enumi})}
\renewcommand{\labelenumi}{\textup{\theenumi}}
\item
$ \rho_n^0:\K_0(\wtT_{A_n}) \to \Coker(
\begin{bmatrix}
I_n - A_n^t\\
-C_n^t
\end{bmatrix}
: \Z^n \rightarrow \Z^{n+1})$
 such that $\rho_n^0([1_{\wtT_{A_n}}]) = [1_{n+1}],$
\item
$ \rho_n^1:\K_1(\wtT_{A_n}) \to \Ker(
\begin{bmatrix}
I_n - A_n^t\\
-C_n^t
\end{bmatrix}
: \Z^n \rightarrow \Z^{n+1}),$
\end{enumerate}
where
$
\begin{bmatrix}
I_n - A_n^t\\
-C_n^t
\end{bmatrix}
$ is the $(n+1)\times n$-matrix
defined by
\begin{equation}\label{eq:INAntCnt}
\begin{bmatrix}
I_n - A_n^t\\
-C_n^t
\end{bmatrix}
= 
{\begin{bmatrix}
1-A(1,1) & -A(2,1) & \dots & -A(n,1) \\
 -A(1,2) & 1-A(2,2) & \dots & -A(n,2) \\
\vdots   & \vdots  & \ddots  & \vdots  \\
 -A(1,n) & -A(2,n) & \dots & 1-A(n,n) \\
 -c_1    & -c_2    & \dots & -c_n
\end{bmatrix},} 
\end{equation}
and the isomorphism
$ \rho_n^0:\K_0(\wtT_{A_n}) \to \Coker(
\begin{bmatrix}
I_n - A_n^t\\
-C_n^t
\end{bmatrix}
: \Z^n \rightarrow \Z^{n+1})$
is given by the correspondence 
$[S_i S_i^*]_0 \rightarrow e_i$ for $i=1,\dots,n$ and
$[P_{n+1}]_0 \rightarrow e_{n+1}$,
where $e_i, i=1,\dots, n+1$ is the vector in $\Z^{n+1}$
whose $i$th entry is $1$ and the other entries are zeros,
 and $[1_{n+1}]$ denotes the class of the vector $1_{n+1} \in \Z^{n+1}$ 
all of whose entries are $1$s.
\end{lemma}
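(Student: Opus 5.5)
The plan is to deduce the lemma from the graph algebra description of $\wtT_{A_n}$ in Lemma \ref{lem:EL1} (iii). There $\wtT_{A_n}\cong C^*(\widetilde{G}_{A_n})$ for a finite graph with vertices $v_1,\dots,v_n$ and a single sink $\tilde v_{n+1}$. Every $v_i$ ($i\le n$) emits at least one edge, because $A_n$ is irreducible by (LI). Hence the regular vertices are exactly $v_1,\dots,v_n$, and the only singular vertex is the sink $\tilde v_{n+1}$.

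We use the K-theory formula of Raeburn--Szyma\'nski \cite[Theorem 4.1]{RaebSzy} and Drinen--Tomforde \cite[Theorem 3.1]{DrinenTom} for graphs with sinks. It states that
\[
\K_0(C^*(E))\cong \Coker\left(\begin{bmatrix} B^t - I\\ C^t\end{bmatrix}:\Z^{R}\to\Z^{V}\right),
\qquad
\K_1(C^*(E))\cong \Ker\left(\begin{bmatrix} B^t - I\\ C^t\end{bmatrix}\right).
\]
Here $R$ is the set of regular vertices and the vertex matrix is written in block form: $B$ is the regular-to-regular block and $C$ is the regular-to-singular block. The isomorphism on $\K_0$ sends the vertex projection $[p_v]$ to the basis vector $e_v$.

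For $\widetilde{G}_{A_n}$ the regular-to-regular block is $A_n$, since there is one edge $e_{(i,j)}$ exactly when $A(i,j)=1$. The regular-to-sink column is $C_n$, since there is one edge $v_i\to\tilde v_{n+1}$ exactly when $c_i=1$. After multiplying by $-1$, which changes neither the cokernel nor the kernel, the matrix becomes the one in \eqref{eq:INAntCnt}. This gives $\rho^0_n$ and $\rho^1_n$ as isomorphisms.

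It remains to identify the generators. Under the identification of Lemma \ref{lem:EL1} (iii), the vertex projections are $p_j=S_jS_j^*$ for $j\le n$ and $p_{n+1}=P_{n+1}$. Therefore $\rho^0_n([S_iS_i^*]_0)=e_i$ and $\rho^0_n([P_{n+1}]_0)=e_{n+1}$.

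For the unit, \eqref{eq:EL11} gives $1=\sum_{j=1}^n S_jS_j^*+P_{n+1}$. Taking classes,
\[
\rho^0_n([1])=\sum_{i=1}^{n+1} e_i=[1_{n+1}].
\]

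The main point needing care is matching conventions. Different references use the transpose or the opposite sign, and some index edges by range rather than source. The check is the relation in $\K_0$ coming from $S_i^*S_i\sim S_iS_i^*$: by \eqref{eq:R1},
\[
e_i=\sum_{j=1}^n A(i,j)e_j+c_i e_{n+1}.
\]
The vector $e_i-\sum_j A(i,j)e_j-c_ie_{n+1}$ is exactly the $i$th column of the matrix in \eqref{eq:INAntCnt}. This confirms that the presentation uses the transpose $A_n^t$ and the row $-C_n^t$, as stated.
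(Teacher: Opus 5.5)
Your proposal is correct and is essentially the paper's argument: the paper likewise obtains the lemma by applying the Raeburn--Szyma\'nski and Drinen--Tomforde $\K$-theory formula for finite graphs with a sink to the graph $\widetilde{G}_{A_n}$ realizing $\wtT_{A_n}$. As in your write-up, the regular block is $A_n$, the sink column is $C_n$, $[p_v]_0\mapsto e_v$, and $1=\sum_{j=1}^n S_jS_j^*+P_{n+1}$ gives $[1_{n+1}]$. Your explicit check that $e_i-\sum_{j=1}^n A(i,j)e_j-c_ie_{n+1}$ is the $i$th column of $\begin{bmatrix} I_n-A_n^t\\ -C_n^t\end{bmatrix}$ is a useful addition, since the paper leaves the transpose and sign conventions implicit.
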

\begin{lemma}\label{lem:KEL2}
The map 
$
\begin{bmatrix}
y_1\\
\vdots \\
y_n\\
y_{n+1}
\end{bmatrix} \in \Z^{n+1}
\rightarrow
\begin{bmatrix}
y_1\\
\vdots \\
y_n\\
y_{n+1}\\
y_{n+1}
\end{bmatrix} \in \Z^{n+2}
$
induces a homomorphism
\begin{equation}\label{eq:varphin+1n}
\varphi_{n, n+1}:
\Coker(
\begin{bmatrix}
I_n - A_n^t\\
-C_n^t
\end{bmatrix}
: \Z^n \rightarrow \Z^{n+1})
\rightarrow
\Coker(
\begin{bmatrix}
I_{n+1} - A_{n+1}^t\\
-C_{n+1}^t
\end{bmatrix}
: \Z^{n+1} \rightarrow \Z^{n+2}).
\end{equation}
\end{lemma}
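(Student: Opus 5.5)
To prove Lemma \ref{lem:KEL2}, let $\psi:\Z^{n+1}\to\Z^{n+2}$ be the map $(y_1,\dots,y_{n+1})\mapsto(y_1,\dots,y_n,y_{n+1},y_{n+1})$. It suffices to show that $\psi$ carries the image of the $(n+1)\times n$ matrix $M_n:=\begin{bmatrix} I_n - A_n^t\\ -C_n^t\end{bmatrix}$ into the image of $M_{n+1}:=\begin{bmatrix} I_{n+1} - A_{n+1}^t\\ -C_{n+1}^t\end{bmatrix}$. Then $\psi$ descends to a homomorphism $\varphi_{n,n+1}$ of cokernels. Since both maps are linear, it is enough to check this on the generators. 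So for each $i=1,\dots,n$, let $m_i$ be the $i$th column of $M_n$, namely $m_i=e_i-\sum_{j=1}^n A(i,j)e_j - c_i e_{n+1}$. We must show that $\psi(m_i)$ lies in $M_{n+1}\Z^{n+1}$.

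We claim that in fact $\psi(m_i)$ equals the $i$th column $m_i'$ of $M_{n+1}$. That column is
\[
m_i' = e_i - \sum_{j=1}^{n+1}A(i,j)e_j - c_i e_{n+2}.
\]
By (RS), for $n\ge K$ and $1\le i\le n$ we have $A(i,n+1)=c_i$. Hence
\[
m_i' = e_i - \sum_{j=1}^{n}A(i,j)e_j - c_i e_{n+1} - c_i e_{n+2},
\]
whose $(n+1)$th and $(n+2)$th entries are both $-c_i$. On the other hand, $\psi(m_i)$ has first $n$ entries equal to those of $e_i-\sum_{j\le n}A(i,j)e_j$, and its last two entries are both $-c_i$. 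Therefore $\psi(m_i)=m_i'$, which proves the claim and gives the well-definedness.

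This is the matrix-level counterpart of the operator computation preceding the lemma: there, $[P_{n+1}]=[S_{n+1}S_{n+1}^*]+[P_{n+2}]$, that is, $e_{n+1}\mapsto e_{n+1}+e_{n+2}$, and the identities \eqref{eq:R1} and \eqref{eq:R2} are compatible. One can also note that $\varphi_{n,n+1}$ corresponds to the map induced on $\K_0$ by the inclusion $\wtT_{A_n}\hookrightarrow\wtT_{A_{n+1}}$ under the isomorphisms $\rho^0_n,\rho^0_{n+1}$, since $S_iS_i^*\mapsto S_iS_i^*$ and $P_{n+1}\mapsto S_{n+1}S_{n+1}^*+P_{n+2}$. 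This identification is not needed for the lemma itself.

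The only point requiring care is the use of the hypothesis $n\ge K$, which supplies $A(i,n+1)=c_i$ for $i\le n$. The diagonal condition is not needed here. There is no real obstacle.
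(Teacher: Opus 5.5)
Your proof is correct and is essentially the paper's argument. The paper uses $A(i,n+1)=c_i$ for $i\le n$ from (RS) to verify the identity $\psi(M_n y)=M_{n+1}\,(y_1,\dots,y_n,0)^t$, which is exactly your column-by-column observation that $\psi$ sends the $i$th column of $M_n$ to the $i$th column of $M_{n+1}$.
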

\begin{proof}
By (RS), the equalities
$A(i,n+1) = c_i$ for $i=1,\dots, n$ hold. 
We then have
\begin{align*}
 &\varphi_{n,n+1}(
\begin{bmatrix}
I_n - A_n^t\\
-C_n^t
\end{bmatrix}
\begin{bmatrix}
y_1\\
\vdots \\
y_n
\end{bmatrix}) \\
= &
{\begin{bmatrix}
1-A(1,1) & -A(2,1) & \dots & -A(n,1) \\
 -A(1,2) & 1-A(2,2) & \dots & -A(n,2) \\
\vdots   & \vdots  & \ddots & \vdots  \\
 -A(1,n) & -A(2,n) & \dots & 1-A(n,n) \\
 -c_1    & -c_2    & \dots & -c_n \\
 -c_1    & -c_2    & \dots & -c_n
 \end{bmatrix}
\begin{bmatrix}
y_1\\
\vdots \\
y_n
\end{bmatrix}
} \\
= &
{\begin{bmatrix}
1-A(1,1) & -A(2,1) & \dots & -A(n,1)& -A(n+1,1) \\
 -A(1,2) & 1-A(2,2) & \dots& -A(n,2)& -A(n+1,2) \\
\vdots   & \vdots  &\ddots & \vdots & \vdots \\
 -A(1,n) & -A(2,n) & \dots & 1-A(n,n)& -A(n+1,n) \\
 -A(1,n+1)& -A(2,n+1)& \dots & -A(n,n+1) & 1-A(n+1,n+1)\\
 -c_1    & -c_2    & \dots & -c_n & -c_{n+1}
 \end{bmatrix}
\begin{bmatrix}
y_1\\
\vdots \\
y_n \\
0
\end{bmatrix}
} \\
= &
\begin{bmatrix}
I_{n+1} - A_{n+1}^t\\
-C_{n+1}^t
\end{bmatrix}
\begin{bmatrix}
y_1\\
\vdots \\
y_n \\
0
\end{bmatrix}
\end{align*}
so that the map
$\varphi_{n,n+1}$
defined by \eqref{eq:varphin+1n}
is well-defined.
\end{proof}
The natural unital embedding 
$\wtT_{A_n}\hookrightarrow \wtT_{A_{n+1}}$
induces a homomorphism 
$\K_0(\wtT_{A_n})\rightarrow \K_0(\wtT_{A_{n+1}})$
written as $\iota_{n,n+1}^0$.
We identify 
$\K_0(\wtT_{A_n})$ with 
$\Coker(
\begin{bmatrix}
I_n - A_n^t\\
-C_n^t
\end{bmatrix}
: \Z^n \rightarrow \Z^{n+1}).
$
\begin{lemma}\label{lem:KEL3}
Assume that a matrix $A$ is (RS) and (LI).
\begin{enumerate}
\renewcommand{\theenumi}{(\roman{enumi})}
\renewcommand{\labelenumi}{\textup{\theenumi}}
\item The diagram

\begin{equation}\label{eq:CDK0wt}
\begin{CD}
\K_0(\wtT_{A_n})  @>{\iota_{n,n+1}^0}>> \K_0(\wtT_{A_{n+1}}) \\
 @V{\rho_n^0}VV  @VV{\rho_{n+1}^0}V \\
\Coker(
\begin{bmatrix}
I_n - A_n^t\\
-C_n^t
\end{bmatrix}
: \Z^n \rightarrow \Z^{n+1})
@>{\varphi_{n,n+1}}>>
\Coker(
\begin{bmatrix}
I_{n+1} - A_{n+1}^t\\
-C_{n+1}^t
\end{bmatrix}
: \Z^{n+1} \rightarrow \Z^{n+2}).
\end{CD}
\end{equation}

is commutative.
\item
Furthermore if $A$ satisfies (DC),  
the homomorphism $\varphi_{n, n+1}$ (see \eqref{eq:varphin+1n})  is an isomorphism, 
and hence  
$\iota_{n,n+1}^0: \K_0(\wtT_{A_n})  \rightarrow \K_0(\wtT_{A_{n+1}})$
is an isomorphism.
\end{enumerate}
\end{lemma}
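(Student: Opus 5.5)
For (i), we check commutativity on generators of $\K_0(\wtT_{A_n})$. Lemma \ref{lem:EL1} identifies $\K_0(\wtT_{A_n})$ with the cokernel via $[S_iS_i^*]_0 \mapsto e_i$ for $i\le n$ and $[P_{n+1}]_0\mapsto e_{n+1}$. Under the inclusion $\wtT_{A_n}\hookrightarrow\wtT_{A_{n+1}}$, each projection $S_iS_i^*$ with $i\le n$ is sent to itself, so $\rho^0_{n+1}\iota^0_{n,n+1}([S_iS_i^*]_0)=e_i=\varphi_{n,n+1}(e_i)$. By \eqref{eq:Pn}, $P_{n+1}=S_{n+1}S_{n+1}^*+P_{n+2}$, so $[P_{n+1}]_0=[S_{n+1}S_{n+1}^*]_0+[P_{n+2}]_0$ in $\K_0(\wtT_{A_{n+1}})$. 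This maps to $e_{n+1}+e_{n+2}$, which is exactly $\varphi_{n,n+1}(e_{n+1})$ by the formula in Lemma \ref{lem:KEL2}. The classes $[S_iS_i^*]_0$ and $[P_{n+1}]_0$ generate $\K_0(\wtT_{A_n})$, since their images $e_1,\dots,e_{n+1}$ generate the cokernel. Hence the diagram commutes.

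For (ii), we write $B_n$ for the $(n+1)\times n$ matrix in \eqref{eq:INAntCnt} and show directly that $\varphi_{n,n+1}$ is bijective. The commutativity in (i) then transfers this to $\iota^0_{n,n+1}$.

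\emph{Surjectivity.} It suffices to show that $e_{n+1}$ and $e_{n+2}$ lie in the image modulo $B_{n+1}\Z^{n+1}$, since $e_1,\dots,e_n$ are hit trivially. The image already contains $e_{n+1}+e_{n+2}$, so it is enough to get one of the two. Consider the last column of $B_{n+1}$, namely $B_{n+1}e_{n+1}$. Its entries are
\begin{itemize}
\item $-A(n+1,j)$ in rows $j\le n$,
\item $1-A(n+1,n+1)=1-c_{n+1}$ in row $n+1$ (using (DC)),
\item $-c_{n+1}$ in row $n+2$.
\end{itemize}
If $c_{n+1}=0$, this relation reads $e_{n+1}\equiv \sum_{j\le n}A(n+1,j)e_j$, so $e_{n+1}$ is in the image. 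If $c_{n+1}=1$, it reads $e_{n+2}\equiv-\sum_{j\le n}A(n+1,j)e_j$, so $e_{n+2}$ is in the image.

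\emph{Injectivity.} Suppose $\varphi_{n,n+1}(y)=B_{n+1}z$ with $z=(z',z_{n+1})$. Rows $n+1$ and $n+2$ of $\varphi_{n,n+1}(y)$ are both equal to $y_{n+1}$. Subtracting the last two rows of $B_{n+1}z$ gives
\[
\sum_{i\le n}(c_i-A(i,n+1))z_i+(1-A(n+1,n+1)+c_{n+1})z_{n+1}=0 .
\]
By (RS) the coefficients $c_i-A(i,n+1)$ vanish, and by (DC) the coefficient of $z_{n+1}$ equals $1$. Hence $z_{n+1}=0$. Then $B_{n+1}(z',0)$ is exactly the computation in Lemma \ref{lem:KEL2}: it equals $\varphi_{n,n+1}(B_nz')$. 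Since $\varphi_{n,n+1}$ is injective at the level of $\Z^{n+1}\to\Z^{n+2}$, we get $y=B_nz'$, so $y$ is zero in the cokernel.

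The main obstacle is the injectivity step, specifically extracting $z_{n+1}=0$. This is the only place where (DC) is used in an essential way, through the row-difference identity above.
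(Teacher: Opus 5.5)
Your proof is correct and follows the paper's argument. Part (i) is checked on the generators using $P_{n+1}=S_{n+1}S_{n+1}^*+P_{n+2}$; in part (ii), injectivity comes from comparing rows $n+1$ and $n+2$ to force $z_{n+1}=0$, and surjectivity uses the last column of the $(n+2)\times(n+1)$ matrix. (The paper writes a single explicit preimage with $z_{n+1}=x_{n+1}-x_{n+2}$ instead of your case split on $c_{n+1}$.)

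One correction to your closing remark: (DC) is just as essential for surjectivity as for injectivity. Under $x\mapsto x_{n+1}-x_{n+2}$, i.e.\ modulo the image of $\varphi_{n,n+1}$, the relations span $(1-A(n+1,n+1)+c_{n+1})\Z$, which is $0$ or $2\Z$ without (DC). Your injectivity argument, by contrast, only needs this coefficient to be nonzero.
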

\begin{proof}
(i) The $\K$-groups
$\K_0(\wtT_{A_n})$ and  
$\K_0(\wtT_{A_{n+1}})$
are generated by the classes of projections
$[S_1S_1^*]_0,\dots,[S_nS_n^*]_0,[P_{n+1}]_0$
and
$[S_1S_1^*]_0,\dots,[S_{n+1}S_{n+1}^*]_0,[P_{n+2}]_0$,
respectively.
By the identity
$P_{n+1} = 
S_{n+1} S_{n+1}^* + P_{n+2},
$
one knows that the diagram \eqref{eq:CDK0wt}
is commutative.

(ii) Further assume that $A$ satisfies (DC), so  the equalities
$A(i,n+1) = c_i$ for $i=1,\dots, n+1$ hold.
We then have
\begin{align*}
\begin{bmatrix}
I_{n+1} - A_{n+1}^t\\
-C_{n+1}^t
\end{bmatrix}
=
\begin{bmatrix}
1-A(1,1) & -A(2,1) & \dots & -A(n,1)& -A(n+1,1) \\
 -A(1,2) & 1-A(2,2) & \dots& -A(n,2)& -A(n+1,2) \\
\vdots   & \vdots  &\ddots & \vdots & \vdots \\
 -A(1,n) & -A(2,n) & \dots & 1-A(n,n)& -A(n+1,n) \\
 -A(1,n+1)& -A(2,n+1)& \dots & -A(n,n+1) & 1-A(n+1,n+1)\\
 -A(1,n+1)& -A(2,n+1)& \dots & -A(n,n+1) & -A(n+1,n+1)
 \end{bmatrix}.
\end{align*}
%We will first show the surjectivity of $\varphi_{n,n+1}$.
For any $[x_i]_{i=1}^{n+2} \in \Z^{n+2},$
by putting
\[z_i =0\;\text{for}\;\; i=1, \dots,n, \,\,
z_{n+1} = x_{n+1} - x_{n+2},\]
and  
\begin{gather*}
y_i = x_i + A(n+1, i) z_{n+1}\; \text{ for }\; i=1,\dots,n,\\
y_{n+1}=x_{n+2}+A(n+1, n+1)z_{n+1},
\end{gather*}
we have
\begin{equation*}
\begin{bmatrix}
x_1 \\
\vdots \\
x_n \\
x_{n+1} \\
x_{n+2}
\end{bmatrix}
-
\begin{bmatrix}
y_1 \\
\vdots \\
y_n \\
y_{n+1} \\
y_{n+1} 
\end{bmatrix}
=
\begin{bmatrix}
I_{n+1} - A_{n+1}^t\\
-C_{n+1}^t
\end{bmatrix}
\begin{bmatrix}
z_1 \\
\vdots \\
z_n \\
z_{n+1} 
\end{bmatrix}.
\end{equation*}  
This shows that the map
$\varphi_{n,n+1}$ is surjective.
We will next show the injectivity of the map $\varphi_{n,n+1}$. 
Suppose that $\varphi_{n,n+1}([y_i]_{i=1}^{n+1}) =0,$
and take 
 $[w_i]_{i=1}^{n+1} \in \Z^{n+1}$ such that 
\begin{equation*}
\begin{bmatrix}
y_1 \\
\vdots \\
y_n \\
y_{n+1} \\
y_{n+1} 
\end{bmatrix}
=
\begin{bmatrix}
I_{n+1} - A_{n+1}^t\\
-C_{n+1}^t
\end{bmatrix}
\begin{bmatrix}
w_1 \\
\vdots \\
w_n \\
w_{n+1} 
\end{bmatrix}
\end{equation*}  
By looking at the $(n+1)$th row and $(n+2)$th row,
one has $w_{n+1} =0,$
and the above equality implies
\begin{equation*}
\begin{bmatrix}
y_1 \\
\vdots \\
y_n \\
y_{n+1} 
\end{bmatrix}
%=
%\begin{bmatrix}
%1-A(1,1) & -A(2,1) & \dots & -A(n,1) \\
% -A(1,2) & 1-A(2,2) & \dots& -A(n,2) \\
%\vdots   & \vdots  &\ddots & \vdots  \\
% -A(1,n) & -A(2,n) & \dots & 1-A(n,n) \\
% -A(1,n+1)& -A(2,n+1) & \dots & -A(n,n+1)
% \end{bmatrix}
%\begin{bmatrix}
%w_1 \\
%\vdots \\
%w_n 
%\end{bmatrix}
%
=
\begin{bmatrix}
I_{n} - A_{n}^t\\
-C_{n}^t
\end{bmatrix}
\begin{bmatrix}
w_1 \\
\vdots \\
w_n 
\end{bmatrix},
\end{equation*}  
showing the injectivity of the map $\varphi_{n,n+1}$.
\end{proof}
\begin{lemma}\label{lem:KEL4}
The map 
$
\begin{bmatrix}
x_1\\
\vdots \\
x_n
\end{bmatrix} \in \Z^{n}
\rightarrow
\begin{bmatrix}
x_1\\
\vdots \\
x_n\\
0
\end{bmatrix} \in \Z^{n+1}
$
induces a homomorphism
\begin{equation}\label{eq:phin+1n}
\phi_{n,n+1}:
\Ker(
\begin{bmatrix}
I_n - A_n^t\\
-C_n^t
\end{bmatrix}
: \Z^n \rightarrow \Z^{n+1})
\rightarrow
\Ker(
\begin{bmatrix}
I_{n+1} - A_{n+1}^t\\
-C_{n+1}^t
\end{bmatrix}
: \Z^{n+1} \rightarrow \Z^{n+2}).
\end{equation}
\end{lemma}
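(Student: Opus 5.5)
The plan is to check directly that the inclusion $x \mapsto (x,0)$ carries the kernel of the $(n+1)\times n$ matrix $\begin{bmatrix} I_n - A_n^t\\ -C_n^t\end{bmatrix}$ into the kernel of the $(n+2)\times(n+1)$ matrix $\begin{bmatrix} I_{n+1} - A_{n+1}^t\\ -C_{n+1}^t\end{bmatrix}$. This mirrors the computation in the proof of Lemma \ref{lem:KEL2}.

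Take $x = [x_i]_{i=1}^n \in \Z^n$ in the first kernel. Then
\[
x_j - \sum_{i=1}^n A(i,j)x_i = 0 \quad (j=1,\dots,n), \qquad \sum_{i=1}^n c_i x_i = 0.
\]
Now apply the larger matrix to $(x_1,\dots,x_n,0)$. Because the last coordinate is $0$, the last column of the larger matrix plays no role.

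The rows come out as follows.
\begin{itemize}
\item For $j=1,\dots,n$, row $j$ gives $x_j - \sum_{i=1}^n A(i,j)x_i$. This is $0$ by hypothesis.
\item Row $n+1$ gives $-\sum_{i=1}^n A(i,n+1)x_i$. By (RS), $A(i,n+1)=c_i$ for $i=1,\dots,n$ since $n\ge K$, so this equals $-\sum_{i=1}^n c_i x_i = 0$.
\item Row $n+2$ gives $-\sum_{i=1}^n c_i x_i$, which is $0$ by the last defining equation of the kernel.
\end{itemize}
So the image lies in the second kernel. The map is the restriction of a group homomorphism $\Z^n\to\Z^{n+1}$, so $\phi_{n,n+1}$ is a well-defined homomorphism, and it is injective.

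There is no real obstacle. The only point needing care is row $n+1$, which is exactly where the (RS) identity $A(i,n+1)=c_i$ is used. The diagonal condition (DC) is not needed here, because the coordinate $x_{n+1}$ is zero.
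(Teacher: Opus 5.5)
Your proposal is correct and follows essentially the paper's own argument: apply the larger matrix to $(x_1,\dots,x_n,0)$, use (RS) to rewrite row $n+1$ as $-\sum_{i=1}^n c_i x_i$, and observe that the result is the smaller matrix with its last row duplicated, applied to $x$, which vanishes. Your remark that (DC) is not needed matches the paper, which invokes (DC) only later, for the surjectivity of $\phi_{n,n+1}$ in Lemma \ref{lem:KEL5}.
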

\begin{proof}
By (RS), the equalities
$A(i,n+1) = c_i$ for $i=1,\dots, n$ hold. 
For 
$[x_i]_{i=1}^n \in \Z^n$ 
with
$
\begin{bmatrix}
I_{n} - A_{n}^t\\
-C_{n}^t
\end{bmatrix}
[x_i]_{i=1}^n
=0,$
we then have
\begin{align*}
 & {
 \begin{bmatrix}
I_{n+1} - A_{n+1}^t\\
-C_{n+1}^t
\end{bmatrix}
(\phi_{n,n+1}(
\begin{bmatrix}
x_1\\
\vdots \\
x_n
\end{bmatrix}) )
}\\
= &
{\begin{bmatrix}
1-A(1,1) & -A(2,1) & \dots & -A(n,1)& -A(n+1,1) \\
 -A(1,2) & 1-A(2,2) & \dots& -A(n,2)& -A(n+1,2) \\
\vdots   & \vdots  &\ddots & \vdots & \vdots \\
 -A(1,n) & -A(2,n) & \dots & 1-A(n,n)& -A(n+1,n) \\
 -A(1,n+1)& -A(2,n+1)& \dots & -A(n,n+1) & 1-A(n+1,n+1)\\
 -c_1    & -c_2    & \dots & -c_n & -c_{n+1}
 \end{bmatrix}
\begin{bmatrix}
x_1\\
\vdots \\
x_n \\
0
\end{bmatrix}
} \\
= &
{\begin{bmatrix}
1-A(1,1) & -A(2,1) & \dots & -A(n,1) \\
 -A(1,2) & 1-A(2,2) & \dots & -A(n,2) \\
\vdots   & \vdots  &\ddots & \vdots  \\
 -A(1,n) & -A(2,n) & \dots & 1-A(n,n) \\
 -c_1    & -c_2    & \dots & -c_n \\
 -c_1    & -c_2    & \dots & -c_n
 \end{bmatrix}
\begin{bmatrix}
x_1\\
\vdots \\
x_n
\end{bmatrix}
} = 
\begin{bmatrix}
0\\
\vdots \\
0 \\
0
\end{bmatrix}.
\end{align*}
so that the map
$\phi_{n,n+1}$
defined by \eqref{eq:phin+1n}
is well-defined.
\end{proof}
The natural unital embedding 
$\wtT_{A_n}\hookrightarrow \wtT_{A_{n+1}}$
induces a homomorphism 
$\K_1(\wtT_{A_n})\rightarrow \K_1(\wtT_{A_{n+1}})$
written as $\iota_{n,n+1}^1$.
\begin{lemma}\label{lem:KEL5}
Assume that a matrix $A$ is (RS) and (LI).
\begin{enumerate}
\renewcommand{\theenumi}{(\roman{enumi})}
\renewcommand{\labelenumi}{\textup{\theenumi}}
\item The diagram

\begin{equation}\label{eq:CDK1wt}
\begin{CD}
\K_1(\wtT_{A_n})  @>{\iota_{n,n+1}^1}>> \K_1(\wtT_{A_{n+1}}) \\
@V{\rho_n^1}VV  @VV{\rho_{n+1}^1}V \\
\Ker(
\begin{bmatrix}
I_n - A_n^t\\
-C_n^t
\end{bmatrix}
: \Z^n \rightarrow \Z^{n+1})
@>{\phi_{n,n+1}}>>
\Ker(
\begin{bmatrix}
I_{n+1} - A_{n+1}^t\\
-C_{n+1}^t
\end{bmatrix}
: \Z^{n+1} \rightarrow \Z^{n+2}).
\end{CD}
\end{equation}

is commutative.
\item
Furthermore if $A$ satisfies (DC), then  
the homomorphism \eqref{eq:phin+1n} is isomorphic, 
and hence  
$\iota_{n,n+1}^1: \K_1(\wtT_{A_n})  \rightarrow \K_1(\wtT_{A_{n+1}})$
is isomorphic.
\end{enumerate}
\end{lemma}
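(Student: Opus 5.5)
Part (i) is a naturality statement for the Raeburn--Szyma\'nski/Drinen--Tomforde identification of $\K_1$ of a graph algebra with a sink. I would not try to track unitaries directly. Instead I would realize $\rho_n^1$ through the six-term exact sequence of the extension $0\to\calK(H)\to\wtT_{A_n}\to\OCK_{A_n}\to 0$ of the preceding Lemma (ii), or equivalently through the gauge-action/Pimsner--Voiculescu sequence for the graph algebra $C^*(\widetilde{G}_{A_n})$.

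In that description, $\K_1(\wtT_{A_n})$ is the kernel of the map $\Z^n\to\Z^{n+1}$ given by the matrix \eqref{eq:INAntCnt}. That map is computed on the $\K_0$ of the fixed-point (AF) core, whose generators are the vertex projections $[S_iS_i^*]_0$ and $[P_{n+1}]_0$. Since the embedding $\wtT_{A_n}\hookrightarrow\wtT_{A_{n+1}}$ is gauge-equivariant, the whole six-term sequence is natural. Under the embedding, $[S_iS_i^*]_0\mapsto[S_iS_i^*]_0$ for $i\le n$, and the sink projection $P_{n+1}=S_{n+1}S_{n+1}^*+P_{n+2}$ is sent to $e_{n+1}+e_{n+2}$.

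On the $\Z^n\to\Z^{n+1}$ level (the vertices that are not sinks), the induced map is therefore the inclusion $x\mapsto(x,0)$. This is exactly $\phi_{n,n+1}$, and by Lemma~\ref{lem:KEL4} it restricts to the kernels. The compatibility on the target $\Z^{n+1}\to\Z^{n+2}$ is the map $\varphi_{n,n+1}$ already used in Lemma~\ref{lem:KEL2}. The computation in Lemma~\ref{lem:KEL2} then shows that the square of matrices commutes, and passing to kernels gives the commutativity of \eqref{eq:CDK1wt}.

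For (ii), since the vertical maps $\rho_n^1$ and $\rho_{n+1}^1$ are isomorphisms, it suffices to show that $\phi_{n,n+1}$ is bijective on kernels. Injectivity is obvious. For surjectivity, take $w=(w_1,\dots,w_{n+1})$ in the kernel of the $(n+2)\times(n+1)$ matrix. Under (DC), the last two rows of that matrix, as displayed in the proof of Lemma~\ref{lem:KEL3}(ii), are equal except for the $1$ in position $(n+1,n+1)$. Subtracting the $(n+2)$th equation from the $(n+1)$th therefore gives $w_{n+1}=0$.

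With $w_{n+1}=0$, the first $n$ equations and the last equation (which uses $A(i,n+1)=c_i$) say precisely that $(w_1,\dots,w_n)$ lies in the kernel of the $(n+1)\times n$ matrix. So $w=\phi_{n,n+1}(w_1,\dots,w_n)$, and $\phi_{n,n+1}$ is surjective. This is the same argument as the injectivity step in Lemma~\ref{lem:KEL3}.

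The main obstacle is the naturality in (i). The cited formulas give $\rho_n^1$ only abstractly, so I must fix a natural construction of it (via the gauge-action dual sequence) and check that the inclusion acts on vertex projections as described. Once that is in place, the rest is the linear algebra above.
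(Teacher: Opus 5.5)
Your proposal is correct and follows the paper's proof. Part (i) rests on the naturality of the Raeburn--Szyma\'nski gauge-action computation under the inclusion $\wtT_{A_n}\hookrightarrow\wtT_{A_{n+1}}$, which sends $[S_iS_i^*]_0$ to $[S_iS_i^*]_0$ and so acts on the non-sink coordinates as $\phi_{n,n+1}$; part (ii) is the same row-subtraction argument, where (DC) forces $w_{n+1}=0$.

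One small caution: drop the ``or equivalently'' via the extension $0\to\calK(H)\to\wtT_{A_n}\to\OCK_{A_n}\to 0$, since only the gauge-equivariant route gives naturality here. The inclusion does not carry the ideal $\calK(H)$ of $\wtT_{A_n}$ into that of $\wtT_{A_{n+1}}$: $P_{n+1}=S_{n+1}S_{n+1}^*+P_{n+2}$ does not lie in the ideal generated by $P_{n+2}$.
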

\begin{proof}
(i) 
The isomorphism
$ \rho_n^1:\K_1(\wtT_{A_n})  \to 
\Ker(
\begin{bmatrix}
I_n - A_n^t\\
-C_n^t
\end{bmatrix}
: \Z^n \rightarrow \Z^{n+1})
$
given 
in 
\cite[Theorem 3.2]{RaebSzy} 
(cf. \cite[Theorem 3.1]{DrinenTom})
is obtained by  
restricting the isomorphism
between the $\K_0$-groups
$\K_0(\wtT_{A_n})$ and  
$\Coker(
\begin{bmatrix}
I_n - A_n^t\\
-C_n^t
\end{bmatrix}
: \Z^n \rightarrow \Z^{n+1}).
$
As in the proof of \cite[Theorem 3.2]{RaebSzy},
the map
$\iota_{n,n+1}^1:\K_1(\wtT_{A_n}) \to \K_1(\wtT_{A_{n+1}}) 
$
is given by the injection
satisfying
$\iota_{n,n+1}^1([S_iS_i^*]) = [S_iS_i^*], i=1,\dots,n,$
so that the diagram \eqref{eq:CDK1wt} commutes.

(ii) Further assume that  $A$ satisfies (DC), 
so the equalities
$A(i,n+1) = c_i$ for $i=1,\dots, n+1$ hold.
%We then have
%\begin{align*}
%\begin{bmatrix}
%I_{n+1} - A_{n+1}^t\\
%-C_{n+1}^t
%\end{bmatrix}
%=
%\begin{bmatrix}
%1-A(1,1) & -A(2,1) & \dots & -A(n,1)& -A(n+1,1) \\
% -A(1,2) & 1-A(2,2) & \dots& -A(n,2)& -A(n+1,2) \\
%\vdots   & \vdots  &\ddots & \vdots & \vdots \\
% -A(1,n) & -A(2,n) & \dots & 1-A(n,n)& -A(n+1,n) \\
% -A(1,n+1)& -A(2,n+1)& \dots & -A(n,n+1) & 1-A(n+1,n+1)\\
% -A(1,n+1)& -A(2,n+1)& \dots & -A(n,n+1) & -A(n+1,n+1)
% \end{bmatrix}
%\end{align*}
The injectivity of  $\phi_{n,n+1}$ is obvious.
We will show the surjectivity of $\phi_{n,n+1}$.
For any $[y_i]_{i=1}^{n+1} \in 
\Ker(
\begin{bmatrix}
I_{n+1} - A_{n+1}^t\\
-C_{n+1}^t
\end{bmatrix}
: \Z^{n+1} \rightarrow \Z^{n+2}),
$
we have
\begin{align*}
\begin{bmatrix}
1-A(1,1) & -A(2,1) & \dots & -A(n,1)& -A(n+1,1) \\
 -A(1,2) & 1-A(2,2) & \dots& -A(n,2)& -A(n+1,2) \\
\vdots   & \vdots  &\ddots & \vdots & \vdots \\
 -A(1,n) & -A(2,n) & \dots & 1-A(n,n)& -A(n+1,n) \\
 -A(1,n+1)& -A(2,n+1)& \dots & -A(n,n+1) & 1-A(n+1,n+1)\\
 -A(1,n+1)& -A(2,n+1)& \dots & -A(n,n+1) & -A(n+1,n+1)
 \end{bmatrix}
\begin{bmatrix}
y_1\\
\vdots \\
y_n \\
y_{n+1}
\end{bmatrix}
= 0
\end{align*}
so that $y_{n+1} =0$ and hence
\begin{align*}
\begin{bmatrix}
1-A(1,1) & -A(2,1) & \dots & -A(n,1) \\
 -A(1,2) & 1-A(2,2) & \dots& -A(n,2) \\
\vdots   & \vdots  &\ddots & \vdots  \\
 -A(1,n) & -A(2,n) & \dots & 1-A(n,n) \\
 -c_1    & -c_2    & \dots & -c_n 
 \end{bmatrix}
\begin{bmatrix}
y_1\\
\vdots \\
y_n 
\end{bmatrix}
= 0,
\end{align*}
showing that $\phi_{n,n+1}$ is surjective.
\end{proof}
We thus have the following formulas for the $\K$-groups
of the Exel--Laca algebra $\OELA$.
\begin{proposition}\label{prop:KformulaEL}
Assume that a matrix $A$ is (RS) and (LI).
\begin{enumerate}
\renewcommand{\theenumi}{(\roman{enumi})}
\renewcommand{\labelenumi}{\textup{\theenumi}}
\item 
$\K_0(\OELA) \cong 
\varinjlim 
\{ \varphi_{n, n+1}:
\Coker(
\begin{bmatrix}
I_n - A_n^t\\
-C_n^t
\end{bmatrix}
)
\to
\Coker(
\begin{bmatrix}
I_{n+1} - A_{n+1}^t\\
-C_{n+1}^t
\end{bmatrix}
), \, \, 
n \ge K \}
%: \Z^n \rightarrow \Z^{n+1}),
$

$\K_1(\OELA) \cong \varinjlim 
\{ \phi_{n,n+1}:
\Ker(
\begin{bmatrix}
I_n - A_n^t\\
-C_n^t
\end{bmatrix}
)
\to
\Ker(
\begin{bmatrix}
I_{n+1} - A_{n+1}^t\\
-C_{n+1}^t
\end{bmatrix}
),
\, \, n\ge K
\}
%: \Z^n \rightarrow \Z^{n+1}).
$
\item
Furthermore if $A$ satisfies (DC), then  
there exists $K \in \N$ such that
for $n \ge K$, we have   

$\K_0(\OELA) \cong  
\Coker(
\begin{bmatrix}
I_n - A_n^t\\
-C_n^t
\end{bmatrix}
: \Z^n \rightarrow \Z^{n+1}),
$

$\K_1(\OELA) \cong  
\Ker(
\begin{bmatrix}
I_n - A_n^t\\
-C_n^t
\end{bmatrix}
: \Z^n \rightarrow \Z^{n+1}).
$

Hence if the matrix $A$ is (DRS) and (LI), 
then the $\K$-groups of the Exel--Laca algebra $\OELA$ is finitely generated.
\end{enumerate}
\end{proposition}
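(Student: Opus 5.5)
The proof combines continuity of $\K$-theory under inductive limits with the stage-by-stage computations already established. By the inductive-limit lemma of Section \ref{sect:inductivelimit}, $\OELA = \varinjlim \wtT_{A_n}$ with unital injective connecting maps $\wtT_{A_n}\hookrightarrow \wtT_{A_{n+1}}$ for $n\ge K$. Continuity of $\K_0$ and $\K_1$ then gives
\[
\K_i(\OELA)\cong \varinjlim\bigl(\K_i(\wtT_{A_n}),\ \iota^i_{n,n+1}\bigr), \qquad i=0,1 .
\]

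For part (i), we transport these limits along the isomorphisms $\rho_n^0$ and $\rho_n^1$ of the Raeburn--Szyma\'nski/Drinen--Tomforde lemma, which apply to the graph algebras $C^*(\widetilde{G}_{A_n})=\wtT_{A_n}$ with a sink. Lemma \ref{lem:KEL3}(i) says the squares formed by $\rho_n^0$, $\rho_{n+1}^0$, $\iota^0_{n,n+1}$ and $\varphi_{n,n+1}$ commute. Lemma \ref{lem:KEL5}(i) gives the same for $\rho^1_n$, $\iota^1_{n,n+1}$ and $\phi_{n,n+1}$. Hence $\{\rho_n^i\}$ is an isomorphism of inductive systems, and it induces isomorphisms of the limits. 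This yields both formulas in (i).

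For part (ii), assume (DC) and enlarge $K$ if necessary so that (RS), (DC) and the choice $c_i=1$ for some $i\le K$ all hold for $n\ge K$. Lemma \ref{lem:KEL3}(ii) and Lemma \ref{lem:KEL5}(ii) then show that every $\varphi_{n,n+1}$ and $\phi_{n,n+1}$ with $n\ge K$ is an isomorphism. An inductive limit of a sequence of isomorphisms is canonically isomorphic to each of its terms. Concretely, the canonical map from the $n$-th term into the limit is injective, because no connecting map kills anything, and surjective, because every element comes from some later stage and can be pulled back by inverting the isomorphisms. So $\K_0(\OELA)$ and $\K_1(\OELA)$ are the cokernel and kernel of the single integer matrix $\begin{bmatrix} I_n - A_n^t\\ -C_n^t\end{bmatrix}:\Z^n\to\Z^{n+1}$ for any $n\ge K$.

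Finite generation follows from the fact that cokernels of maps $\Z^n\to\Z^{n+1}$ are quotients of $\Z^{n+1}$, and kernels are subgroups of $\Z^n$, hence finitely generated since $\Z$ is Noetherian. The one point to check, rather than a real obstacle, is that the $K$ chosen for (DC) is compatible with the $K$ used to define the systems. Taking the maximum of the two thresholds suffices, since discarding finitely many initial terms does not change an inductive limit.
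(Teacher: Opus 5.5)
Your proposal is correct and follows the same route as the paper, which states this proposition without a separate proof, as an immediate consequence of continuity of $\K$-theory for $\OELA=\varinjlim\wtT_{A_n}$, the commuting squares of Lemmas \ref{lem:KEL3}(i) and \ref{lem:KEL5}(i), and the isomorphism statements in their parts (ii). Your remarks on taking the maximum of the (RS)/(DC) thresholds and on the limit of a system of isomorphisms just make explicit bookkeeping the paper leaves implicit.
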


%\newpage

%%%%%%%%%%%%%%%%%%%%%%%%%%%%%%%%%%%%%%%%%%%%%%%%%%%%%%%%%%%%%%
\section{ $\Exts(\wtT_{A_n})$ }
%%%%%%%%%%%%%%%%%%%%%%%%%%%%%%%%%%%%%%%%%%%%%%%%%%%%%%%%%%%%%%%%%%
We are assuming that an infinite  matrix $A =[A(i,j)]_{i,j\in \N}$ 
is (RS) and  (LI).
Take $K \in \N$ in Definition \ref{def:semifinite} (ii) such that 
$c_i =1$ for some $i \in \{1,\dots,K\}$.
Throughout this section we fix $n\in \N$ with $n \ge K$.
  In this section we will study the extension groups $\Ext_*(\wtT_{A_n})$
 to compute the extension groups $\Ext_*(\OELA)$ for the Exel--Laca algebra $\OELA$. 

Let $H$ be a separable infinite dimensional Hilbert space.
Let us denote by $\B(H)$ the $C^*$-algebra of bounded linear operators on $H$
and $\calQ(H)$ the Calkin algebra $\B(H)/ \calK(H)$. 
The natural quotient map $\B(H) \rightarrow \calQ(H)$ is denoted by $\pi$.
For projections 
$e \in \calQ(H)$ and $E \in \B(H)$ satisfying $\pi(E) = e$,
if an element $u \in \calQ(H)$ satisfies the condition that $eue$ is invertible in $e \calQ(H) e$,
 the Fredholm index $\ind_e u$ is defined. 
 Let $\A$ be a separable unital nuclear $C^*$-algebra.
 An extension $\sigma:\A\rightarrow \calQ(H)$ means a unital $*$-monomorphism.
 The strong (resp. weak) extension group $\Exts(\A)$ (resp. $\Extw(\A)$)  
 is defined to be the set of strong (resp. weak) equivalence classes of extensions 
 $\sigma:\A\rightarrow \calQ(H)$, where two extensions $\sigma, \sigma':\A\rightarrow \calQ(H)$
 are said to be strongly (resp. weakly) equivalent %written $\sigma \sigma_{s} \signa
 if there exists a unitary $U \in \B(H)$ (resp. $u \in \calQ(H)$)
 such that $\sigma = \Ad(\pi(U))\circ \sigma'$ 
 (resp. $\sigma = \Ad(u)\circ \sigma'$) (see \cite{Blackadar}, \cite{BDF}, \cite{HR}).
 It is well-known that $\Exts(\A)$ (resp. $\Extw(\A)$) 
 has a natural semigroup structure
 and becomes a group if $\A$ is nuclear.
%The group $\Exts(\A)$ (resp. $\Extw(\A)$) is regarded as the first strong (resp. weak)  
%extension group $\Exts^1(\A)$ (resp. $\Extw^1(\A)$) 
%as seen in \cite{Blackadar} and \cite{Skandalis}.

As the $C^*$-algebra $\wtT_{A_n}$ is a graph algebra associated to a finite directed graph with a sink,
we may use a general formula for $\Extw$-group studied by Drinen--Tomforde \cite{DrinenTom}
(cf. \cite{TomfordeJOT2003})
so that we have the following.
\begin{lemma}[{cf. \cite[Theorem 3.1]{DrinenTom}}]\label{lem:wExtEL1}
Assume that a matrix $A$ is (RS) and (LI).
Then we have  an isomorphism:
\begin{equation}\label{eq:wExtwtTAn}
\Extw(\wtT_{A_n}) \cong \Coker(
\begin{bmatrix}
I_n - A_n |-C_n
\end{bmatrix}
: \Z^{n+1} \rightarrow \Z^n)
\end{equation}
where
$
 \begin{bmatrix}
I_n - A_n |-C_n
\end{bmatrix}
$
is the $n\times (n+1)$-matrix 
defined by
\begin{equation*}
\begin{bmatrix}
I_n - A_n |-C_n
\end{bmatrix}
= 
{\begin{bmatrix}
1-A(1,1) & -A(1,2) & \dots & -A(1,n)& -c_1 \\
 -A(2,1) & 1-A(2,2) & \dots & -A(2,n)& -c_2 \\
\vdots   & \vdots  &\ddots & \vdots & \vdots \\
 -A(n,1) & -A(n,2) & \dots & 1-A(n,n)& -c_n
\end{bmatrix}}
\end{equation*}
\end{lemma}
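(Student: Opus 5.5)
We need to prove $\Extw(\wtT_{A_n}) \cong \Coker([I_n - A_n \mid -C_n])$. The plan is to derive this from the graph-algebra picture of Lemma \ref{lem:EL1}(iii) together with the UCT, rather than recomputing extensions by hand.

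\emph{Reduction to K-homology.} Since $\wtT_{A_n}$ is nuclear and in the bootstrap class (it is the graph algebra $C^*(\widetilde{G}_{A_n})$ of a finite graph), $\Extw(\wtT_{A_n})$ is identified with $\K^1(\wtT_{A_n})$. The UCT then gives a split exact sequence
\[
0\to \Ext^1_\Z(\K_0(\wtT_{A_n}),\Z)\to \Extw(\wtT_{A_n})\to \Hom(\K_1(\wtT_{A_n}),\Z)\to 0 .
\]
By Lemma \ref{lem:EL1}, $\K_0 \cong \Coker(M)$ and $\K_1\cong \Ker(M)$, where $M=\begin{bmatrix} I_n - A_n^t\\ -C_n^t\end{bmatrix}:\Z^n\to\Z^{n+1}$.

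\emph{Linear algebra.} Put $M^t=[I_n - A_n \mid -C_n]:\Z^{n+1}\to\Z^n$. For any integer matrix $M$, Smith normal form gives
\[
\Coker(M^t)\cong \Ext^1_\Z(\Coker M,\Z)\oplus \Hom(\Ker M,\Z).
\]
To see this, diagonalize $M=U\,\mathrm{diag}(d_1,\dots,d_r,0,\dots)\,V$. Then $\Coker M$ has torsion $\bigoplus \Z/d_i$ and free rank $(n+1)-r$, while $\Ker M$ has rank $n-r$. Correspondingly, $\Coker M^t \cong \bigoplus\Z/d_i\oplus \Z^{\,n-r}$. Since $\Ext^1_\Z(\Coker M,\Z)\cong \bigoplus \Z/d_i$ and $\Hom(\Ker M,\Z)\cong\Z^{n-r}$, the two sides agree. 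Combined with the splitting of the UCT sequence, this yields the claimed isomorphism.

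\emph{Alternative via Drinen--Tomforde.} One can instead invoke \cite[Theorem 3.1]{DrinenTom} directly. For a finite graph with sinks, their theorem computes $\Extw(C^*(G))$ as the cokernel of $\begin{bmatrix} A_G^t - I \\ \text{(sink rows)}\end{bmatrix}^t$ restricted to the non-sink vertices. Here the non-sink vertices are $v_1,\dots,v_n$ and the single sink is $\tilde v_{n+1}$. The vertex matrix of $\widetilde{G}_{A_n}$ has rows $[A_n \mid C_n]$ for $v_1,\dots,v_n$, and the sink row is zero. Substituting gives exactly $[I_n - A_n\mid -C_n]$, up to a sign that does not change the cokernel.

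\emph{Main obstacle.} The only delicate point is bookkeeping: matching the transpose and sign conventions of \cite{DrinenTom} with the vertex labelling of $\widetilde{G}_{A_n}$, and checking that the sink does not contribute a column to the domain, which would give $\Z^{n+1}$ on the wrong side. I would cross-check this against the UCT/Smith-normal-form argument above, which is convention-independent up to isomorphism.
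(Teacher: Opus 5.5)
Your proposal is correct, and your main argument takes a somewhat different route from the paper's. The paper gives no computation for this lemma. It simply applies Drinen--Tomforde's $\Ext$ formula for graph algebras of finite graphs with sinks to $\widetilde{G}_{A_n}$, which is your ``alternative'' paragraph.

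Your bookkeeping there is right: the vertex matrix is $\begin{bmatrix} A_n & C_n\\ 0 & 0\end{bmatrix}$, hence $[X-I\mid Y]=-[I_n-A_n\mid -C_n]$. One correction to your last paragraph: the sink \emph{does} contribute a column to the domain $\Z^{n+1}$, namely the $-C_n$ column. What has to be checked is that it contributes no row to the codomain $\Z^n$.

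Your primary argument instead derives the formula from the paper's own $\K$-theory lemma. It uses three ingredients, and all of them check out:
\begin{itemize}
\item the identification $\Extw(\wtT_{A_n})\cong\sqK^1(\wtT_{A_n},\mathbb{C})$;
\item the split UCT sequence $0\to\Ext^1_\Z(\K_0,\Z)\to\sqK^1\to\Hom(\K_1,\Z)\to 0$;
\item the Smith-normal-form identity $\Coker(M^t)\cong\Ext^1_\Z(\Coker M,\Z)\oplus\Hom(\Ker M,\Z)$, where $M^t=[I_n-A_n\mid -C_n]$.
\end{itemize}
This makes the lemma a formal consequence of the $\K$-groups and the UCT. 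The UCT applies because $\wtT_{A_n}$ is a graph algebra, or alternatively because it is an extension of $\OCK_{A_n}$ by $\calK(H)$. Your route is also independent of the transpose and sign conventions in Drinen--Tomforde.

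What it does not give is a canonical isomorphism, since the UCT splitting is unnatural. The later compatibility with the restriction maps $\iota^w_{n+1,n}$ in Lemma~\ref{lem:CDweak} rests on the concrete Fredholm-index isomorphism of Proposition~\ref{prop:Exts3}, not on this lemma. Since the statement only asserts the existence of an isomorphism, this is no defect.
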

Although  the weak extension groups $\Extw(\,\,\, )$
for general graph $C^*$-algebras  including graphs with sinks have been studied in \cite{DrinenTom}
(cf. \cite{RaebSzy}, \cite{TomfordeJOT2003},  etc.),
the  formula for  strong extension groups $\Exts(\,\,\, )$ 
for graph $C^*$-algebras has not been studied.
In this section, we will concentrate on computing the strong extension groups
$\Exts(\wtT_{A_n})$.  
In \cite{MaAnalMath2024}, a formula for  the strong extension group
for a simple Cuntz--Krieger algebra  was presented.
We need a formula of the strong extension group $\Exts(\, \, \, )$ for algebras whose underlying graphs
have sinks.
We will basically follow the discussions given in \cite{MaAnalMath2024}.

Let $\sigma:\wtT_{A_n} \rightarrow \calQ(H)$ be an extension.
Put
$ e_i :=\sigma(S_i S_i^*), i=1,\dots,n$
and
$e_{n+1}:= \sigma(P_{n+1}).$ 
Let $\tau:\wtT_{A_n} \rightarrow \calQ(H)$ 
be a trivial extension such that 
\begin{equation*}
 \tau(S_j S_j^*) =\sigma(S_i S_i^*), \quad j=1,\dots,n
 \quad \text{ and } \quad \tau(P_{n+1})= \sigma(P_{n+1}).
\end{equation*}
Note that the above $\tau$ exists because $\Exts (\mathbb{C}^{n+1})=0$.
Since 
$\sigma(S_i)\tau(S_i^*)$ is a unitary in $\calQ(H)$
commuting with $e_i$, the Fredholm index 
\begin{equation}\label{eq:indexdi} 
d_i(\sigma,\tau) = \ind_{e_i}\sigma(S_i)\tau(S_i^*), \qquad i=1,\dots,n
\end{equation}
is defined.
Let $\tau_m:\wtT_{A_n} \rightarrow \calQ(H), m=1,2$ 
be two trivial extensions such that 
\begin{equation*}
 \tau_m(S_j S_j^*) =\sigma(S_i S_i^*), \quad j=1,\dots,n
 \quad \text{ and } \quad
 \tau_m(P_{n+1})= \sigma(P_{n+1}), \quad \, \, m=1,2.
\end{equation*} 
By \cite{Voiculescu}, one may find a unitary 
$U \in \B(H)$ such that 
$\tau_1(x ) =\pi(U) \tau_2(x) \pi(U^*),\;\; x \in \wtT_{A_n}$.
The condition $\tau_1={\rm Ad}\;\pi(U)\circ \tau_2$ implies that $\pi(U)$ commutes with $e_i$, and one can define
$k_i = \ind_{e_i} \pi(U), i=1,\dots, n+1$.
By a similar method to the proof of \cite[Lemma 2.2]{MaAnalMath2024}, 
we have the following lemma.
%The following lemma is proved by a completely simimalr manner to 
%\cite[Lemma 2.2]{MaAnalMath2024}, so we omit its proof. 
\begin{lemma}[{cf. \cite[Lemma 2.2]{MaAnalMath2024}}] \label{lem:Exts1}
%Let $\sigma:\wtT_{A_n} \rightarrow \calQ(H)$ be an extension.
%For two trivial extensions $\tau_m:\wtT_{A_n} \rightarrow \calQ(H), m=1,2$ 
%satisfying  
%$\tau_m(P_{n+1})= \sigma(P_{n+1})$
%and
%$
% \tau_m(S_j S_j^*) =\sigma(S_i S_i^*), \, \, j=1,\dots,n,
%$
%there exists $[k_i]_{i=1}^{n+1} \in \Z^{n+1}$ such that
Keep the above notation. We have
$$
d_i(\sigma,\tau_1) -d_i(\sigma, \tau_2)
= \sum_{j=1}^n A(i,j)k_j - k_i + c_i k_{n+1}, \, \, \, i=1,\dots,n
\quad
\text{ and }
\quad
\sum_{j=1}^{n+1} k_j =0.
$$
\end{lemma}
%\begin{proof}
%(i) By the Voiculescu's lemma, one may find a unitary $U \in \B(H)$ such that 
%$\tau_2(x) = \pi(U) \tau_1(x) \pi(U)^*, x \in \wtT_{A_n}$.
%Put $u := \pi(U)$. Since $e_{i}, i=1,\dots, n+1$ commute with $u$, we have
%$(e_i u e_i)^*(e_i u e_i) = (e_i u e_i)(e_i u e_i)^* = e_i$ for $i=1,\dots,n+1$. 
%Put $k_i: = \ind_{e_i}u, i=1,\dots, n+1.$
%For $i=1,\dots,n$, we then have
%\begin{align*}
%d_i(\sigma,\tau_2)
%=& \ind_{e_i}\sigma(S_i) \tau_2(S_i^*)\\ 
%=& \ind_{e_i}\sigma(S_i) \tau_1(S_i^*S_i) u \tau_1(S_i^*S_i) \tau_1(S_i^*) u^* \\ 
%=& \ind_{e_i}\sigma(S_i) \tau_1(S_i^*) \tau_1(S_i) u \tau_1(S_i^*) e_i u^* \\ 
%=& \ind_{e_i}\sigma(S_i) \tau_1(S_i^*) 
%+ \ind_{e_i}\tau_1(S_i) u \tau_1(S_i^*)  +\ind_{e_i} u^* \\ 
%= & d_i(\sigma, \tau_1) + \ind_{e_i}\tau_1(S_i) u \tau_1(S_i^*) -k_i.
%\end{align*}
%Since $\sigma(S_i^* S_i ) = \sum_{j=1}^n A(i,j) \sigma(S_j S_j^*) + c_i \sigma(P_{n+1})
%=  \sum_{j=1}^n A(i,j) e_j + c_i e_{n+1},$
%we have
%\begin{align*}
%\ind_{e_i}\tau_1(S_i) u \tau_1(S_i^*)
%=&  \ind_{\tau_1(S_i^* S_i)} u
%=  \ind_{\sigma(S_i^* S_i)} u \\
%=& \sum_{j=1}^n A(i,j) \ind_{e_j} u + c_i\ind_{e_{n+1}} u
%= \sum_{j=1}^n A(i,j) k_j + c_i k_{n+1}
%\end{align*}
%so that 
%\begin{align*}
%d_i(\sigma,\tau_2)
%= & d_i(\sigma, \tau_1) + \sum_{j=1}^n A(i,j) k_j + c_i k_{n+1} -k_i.
%\end{align*}
%(ii)
%Since $\sum_{j=1}^n S_j S_j^* + P_{n+1} =1$
%and $u = \pi(U)$ with $U$ being a unitary,
%we have
%$$
%\sum_{j=1}^{n+1} k_j = \sum_{j=1}^{n+1}\ind_{e_j} u
% = \ind_{\sum_{j=1}^{n+1} e_j} u =0.
% $$
%\end{proof}
%%%%%%%%%%%%%%%%%%%%%%%%%%%%%%%%%%%%
By Lemma \ref{lem:Exts1}, we have
\begin{equation}\label{eq:dsigmak}
\begin{bmatrix}
d_1(\sigma, \tau_1)\\
%d_2(\sigma, \tau_1)\\
\vdots             \\
d_n(\sigma, \tau_1)\\
\end{bmatrix}
-
\begin{bmatrix}
d_1(\sigma, \tau_2)\\
%d_2(\sigma, \tau_2)\\
\vdots             \\
d_n(\sigma, \tau_2)\\
\end{bmatrix}
=-
\begin{bmatrix}
 &          & -c_1 \\
 %&  & -c_2 \\
 & I_n -A_n & \vdots \\
 &          & -c_n \\
\end{bmatrix}
\begin{bmatrix}
k_1\\
%k_2\\
\vdots \\
k_n \\
k_{n+1}
\end{bmatrix}
\quad
\text{ and }
\quad
\sum_{j=1}^{n+1} k_j =0.
\end{equation}
Let us denote by $R_{n+1}$ the $(n+1)\times (n+1)$ matrix
whose bottom row is the vector $[1,\dots,1]$ and the other rows are zero vectors,
so that  
\begin{equation*}
\{ [k_i]_{i=1}^{n+1} \in \Z^{n+1} \mid \sum_{i=1}^{n+1} k_i =0 \}
=\{ (I_{n+1} - R_{n+1})[l_i]_{i=1}^{n+1} \in \Z^{n+1} \mid [l_i]_{i=1}^{n+1} \in \Z^{n+1}\}.
\end{equation*}
Then the formula \eqref{eq:dsigmak} says that 
for an extension
$\sigma:\wtT_{A_n}\rightarrow \calQ(H)$, 
one may define  
\begin{equation}\label{eq:dsigma}
d_\sigma:= [d_i(\sigma,\tau)]_{i=1}^n  \in \Z^{n}/{ [I_n - A_n \mid -C_n][I_{n+1} - R_{n+1}] \Z^{n+1}}.
\end{equation}
It is easy to verify that 
the value $d_\sigma$ does not depend on the choice of the strong equivalence class of 
an extension
$\sigma: \wtT_{A_n}\rightarrow \calQ(H)$ so that 
$d_\sigma$ 
may be defined on the class $[\sigma]_s \in \Exts(\wtT_{A_n})$
(cf. \cite[Lemma 2.3]{MaAnalMath2024}).
Define the $n \times n$ matrix 
$\widetilde{A}_{C_n}$ by setting
\begin{equation}\label{eq:ACn}
\widetilde{A}_{C_n}:
=
\begin{bmatrix}
A(1,1) - c_1 & A(1,2) -c_1 & \dots & A(1,n) - c_1  \\
A(2,1) - c_2 & A(2,2) -c_2 & \dots & A(2,n) - c_2  \\
\vdots       & \vdots      & \ddots& \vdots  \\
A(n,1) - c_n & A(n,2) -c_n & \dots & A(n,n) - c_n    
\end{bmatrix}.
\end{equation}
We note that the entries of $\widetilde{A}_{C_n}$ possibly negative integers.
We write the quotient group
$\Z^n/ (I_n -\widetilde{A}_{C_n})\Z^n$ 
as
$\Cokern.$ 
Since
\begin{align*}
  & [I_n - A_n \mid -C_n][I_{n+1} - R_{n+1}] \\ 
%= &
%{
%\begin{bmatrix}
%         & -c_1 \\
%I_n -A_n & \vdots \\
%         & -c_n
%\end{bmatrix}
%I_{n+1}
%-
%\begin{bmatrix}
%         & -c_1 \\
%I_n -A_n & \vdots \\
%         & -c_n
%\end{bmatrix}
%\begin{bmatrix}
%0 & \dots & 0\\
%\vdots &  & \vdots\\
%0 & \dots & 0\\
%1 & \dots & 1
%\end{bmatrix} 
%} \\
%=&
%{
%\begin{bmatrix}
%         & -c_1 \\
%I_n -A_n & \vdots \\
%         & -c_n
%\end{bmatrix}
%I_{n+1}
%-
%\begin{bmatrix}
%-c_1 & \dots & -c_1\\
%-c_2 & \dots & -c_2\\
%\vdots &  & \vdots \\
%-c_n & \dots & -c_n
%\end{bmatrix} 
%} \\
=&
{
\begin{bmatrix}
1 - A(1,1) + c_1 & -A(1,2) +c_1 & \dots & -A(1,n) + c_1 & 0 \\
  - A(2,1) + c_2 &1-A(2,2) +c_2 & \dots & -A(2,n) + c_2 & 0 \\
\vdots           & \vdots       &       & \vdots        & \vdots \\
  - A(n,1) + c_n & -A(n,2) +c_n & \dots &1-A(n,n) + c_n & 0   
\end{bmatrix},
}
\end{align*}
we have
\begin{equation*}
\Z^{n}/{ [I_n - A_n \mid -C_n][I_{n+1} - R_{n+1}] \Z^{n+1}}
= \Cokern,
\end{equation*}
so that $d_\sigma \in \Cokern$
for an extension $\sigma: \wtT_{A_n} \rightarrow \calQ(H)$
because of \eqref{eq:dsigma}.

By a completely similar manner to the proof of \cite[Proposition 2.4]{MaAnalMath2024},
we may prove the following proposition. %We omit its proof.
\begin{proposition}[{cf. \cite[Proposition 2.4]{MaAnalMath2024}}] \label{prop:Exts3}
The map defined by
\begin{equation*}
d_{A_n} : [\sigma]_s \in \Exts(\wtT_{A_n}) 
\rightarrow d_\sigma \in \Cokern
\end{equation*}
is an isomorphism of groups, and the map induces an isomorphism
$\Extw(\wtT_{A_n}) \cong \Coker([I_n - A_n \mid -C_n]: \Z^{n+1}\rightarrow \Z^n)$
in \eqref{eq:wExtwtTAn}.
\end{proposition}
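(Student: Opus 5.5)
We prove Proposition \ref{prop:Exts3} by following the scheme of \cite[Proposition 2.4]{MaAnalMath2024}. The proof has four steps: $d_{A_n}$ is a homomorphism, it is surjective, it is injective, and it descends to the weak groups.

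\emph{Homomorphism.} Since $d_\sigma$ depends only on $[\sigma]_s$, we only need additivity. Take extensions $\sigma_1,\sigma_2$ with trivial extensions $\tau_1,\tau_2$ adapted to them. Under an isomorphism $H\oplus H\cong H$, the direct sum $\tau_1\oplus\tau_2$ is a trivial extension adapted to $\sigma_1\oplus\sigma_2$. The index $\ind_{e_i\oplus e_i'}$ of a block-diagonal element is the sum of the indices of its blocks, so $d_{\sigma_1\oplus\sigma_2}=d_{\sigma_1}+d_{\sigma_2}$. As $\Exts(\wtT_{A_n})$ is a group by nuclearity, $d_{A_n}$ is a group homomorphism.

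\emph{Surjectivity.} Fix a trivial extension $\tau$, realized by a representation $\rho$ on $H$ with $\rho(S_iS_i^*)=E_i$ and $\rho(P_{n+1})=E_{n+1}$, where the $E_j$ are infinite-rank projections summing to $1$. Given $[m_i]\in\Z^n$, choose partial isometries $V_i\in E_i\B(H)E_i$ that are Fredholm on $E_iH$ with index $m_i$. Set $\sigma(S_i):=\pi(V_i\rho(S_i))$ for $i\le n$, and $\sigma(P_{n+1}):=\pi(E_{n+1})$. Modulo compacts these elements satisfy the relations \eqref{eq:EL11}: the range projection is unchanged mod compacts, and $\sigma(S_i)^*\sigma(S_i)=\tau(S_i^*S_i)$. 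By universality of the graph algebra $C^*(\widetilde{G}_{A_n})$ from Lemma \ref{lem:EL1}(iii), they define a unital $*$-homomorphism $\sigma$. It is injective: its kernel is an ideal of $\wtT_{A_n}$ not containing $P_{n+1}$, hence it misses the unique essential ideal $\calK(H)$ (Lemma \ref{lem:EL1}(ii)), hence it is zero. By construction $d_i(\sigma,\tau)=\ind_{e_i}\pi(V_i)=m_i$.

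\emph{Injectivity.} This is the main step. Suppose $d_\sigma=0$. By \eqref{eq:dsigmak}, after replacing $\tau$ by $\Ad\pi(U)\circ\tau$ for a suitable unitary $U\in\B(H)$ commuting with the $E_j$ modulo compacts and realizing prescribed indices $k_j$ with $\sum_j k_j=0$, we may assume $d_i(\sigma,\tau)=0$ for all $i$. The vanishing of $\sum_j k_j$ is exactly what allows such a $U$ to be a genuine unitary, by gluing Fredholm partial isometries on the blocks $E_jH$. Now each $\sigma(S_i)\tau(S_i^*)$ is an index-zero unitary in $e_i\calQ(H)e_i$, so it lifts to a unitary $W_i$ on $E_iH$. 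Put $W_{n+1}:=E_{n+1}$. Then $\sigma(S_i)=\pi(W_i)\tau(S_i)$, and $\sigma$ is a unital representation of the relations built from the isometry-type lifts $W_i\rho(S_i)$. This gives a genuine representation $\rho'$ of $\wtT_{A_n}$ with $\pi\circ\rho'=\sigma$, so $\sigma$ is trivial. By Voiculescu's theorem, using the unit of $\wtT_{A_n}$ and the strong form of the trivial class, $\sigma$ is then strongly equivalent to $\tau$, i.e.\ zero in $\Exts(\wtT_{A_n})$.

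The main obstacle is the bookkeeping in this injectivity step: checking that the lifted family really satisfies the Cuntz--Krieger relations with the sink, in particular $\sum_j E_j=1$ together with \eqref{eq:EL11}, exactly and not merely modulo compacts.

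\emph{Weak groups.} Weak equivalence is strong equivalence followed by $\Ad$ of a unitary $u\in\calQ(H)$, which may have nonzero index. Conjugating by a $u$ of index $1$ modifies the $k_j$ without the constraint $\sum_j k_j=0$. Thus $\Extw(\wtT_{A_n})\cong\Exts(\wtT_{A_n})/\Z[\text{image}]$, and the relation subgroup enlarges from $[I_n-A_n\mid -C_n](I_{n+1}-R_{n+1})\Z^{n+1}$ to $[I_n-A_n\mid -C_n]\Z^{n+1}$. This yields the identification with \eqref{eq:wExtwtTAn}, consistent with Lemma \ref{lem:wExtEL1}.
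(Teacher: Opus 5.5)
Your proposal is correct and follows essentially the route the paper takes. The paper writes out no proof and instead defers to \cite[Proposition 2.4]{MaAnalMath2024}, whose scheme is yours: additivity via direct sums, surjectivity by building an extension with prescribed block indices, injectivity by using Lemma \ref{lem:Exts1} with $\sum_j k_j=0$ to make all $d_i(\sigma,\tau)$ vanish and then lifting to a genuine representation, and the weak statement by quotienting out $d_{A_n}(\iota_{A_n}(\Z))$. The step you flag as the main obstacle is in fact immediate, since $(W_i\rho(S_i))(W_i\rho(S_i))^*=W_iE_iW_i^*=E_i$ and $(W_i\rho(S_i))^*(W_i\rho(S_i))=\rho(S_i)^*E_i\rho(S_i)=\rho(S_i^*S_i)$ hold exactly.
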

%\newpage

%%%%%%%%%%%%%%%%%%%%%%%%%%%%%%%%
\section{ $\Ext_*(\OELA)$}
%%%%%%%%%%%%%%%%%%%%%%%%%%%%%%%%%%
We are assuming that the infinite matrix $A = [A(i,j)]_{i,j\in \N}$
is (RS) and (LI).
Keep the notation as in the preceding sections.
For an extension $\sigma:\OELA \rightarrow Q(H)$,
we write its restriction 
$\sigma|_{\wtT_{A_n}}: \wtT_{A_n} \rightarrow Q(H)$
to the subalgebra $\wtT_{A_n}$ as $\sigma_n$.
%As the class $[\sigma]_s \in \Exts(\OELA)$
%gives rise to a well-defined class
%$[\sigma_n]_s \in \Exts(\wtT_{A_n})$,
We then have a homomorphism
$
[\sigma]_s \in \Exts(\OELA) 
 \rightarrow [\sigma_n]_s \in \Exts(\wtT_{A_n}).
 $
 Similarly the inclusion 
 $\wtT_{A_n} \hookrightarrow \wtT_{A_{n+1}}$
 defines a natural homomorphism 
 $
\Exts(\wtT_{A_{n+1}}) \rightarrow \Exts(\wtT_{A_n})
$  written as $\iota_{n+1,n}^s$ 
by restriction of extensions of $\wtT_{A_{n+1}}$
to $\wtT_{A_n}$.
We note the following lemma.
\begin{lemma}
%Assume that the infinite matrix $A = [A(i,j)]_{i,j\in \N}$
%is (RSF), (WUT) and (LI).
The restriction
$[d_i]_{i=1}^{n+1}\in \Z^{n+1}\rightarrow [d_i]_{i=1}^{n}\in \Z^{n}$
induces a homomorphism
$
\Cokernplus1\rightarrow
\Cokern
$
written as $I_{n+1,n}^s$
\end{lemma}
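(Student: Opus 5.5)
The task is to check that truncation $[d_i]_{i=1}^{n+1}\mapsto[d_i]_{i=1}^{n}$ sends the image $(I_{n+1}-\widetilde{A}_{C_{n+1}})\Z^{n+1}$ into $(I_n-\widetilde{A}_{C_n})\Z^n$. Then the induced map $I_{n+1,n}^s$ on cokernels is well defined, and it is a homomorphism because truncation is linear. The only input from the matrix $A$ will be the right stability (RS) relation $A(i,n+1)=c_i$ for $1\le i\le n$ and $n\ge K$.

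Take $z=[z_j]_{j=1}^{n+1}\in\Z^{n+1}$. For $1\le i\le n$, the $i$th entry of $(I_{n+1}-\widetilde{A}_{C_{n+1}})z$ is
\begin{equation*}
z_i-\sum_{j=1}^{n+1}\bigl(A(i,j)-c_i\bigr)z_j
= z_i-\sum_{j=1}^{n}\bigl(A(i,j)-c_i\bigr)z_j-\bigl(A(i,n+1)-c_i\bigr)z_{n+1}.
\end{equation*}
By (RS) the coefficient $A(i,n+1)-c_i$ of $z_{n+1}$ vanishes for every $i\le n$. What remains is exactly the $i$th entry of $(I_n-\widetilde{A}_{C_n})[z_j]_{j=1}^n$. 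Since this holds for every $i\le n$, truncating the first $n$ coordinates of $(I_{n+1}-\widetilde{A}_{C_{n+1}})z$ gives $(I_n-\widetilde{A}_{C_n})[z_j]_{j=1}^n$, which proves the inclusion.

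Put differently, $\widetilde{A}_{C_{n+1}}$ has $\widetilde{A}_{C_n}$ as its upper left $n\times n$ block, and the last column above the diagonal is zero. This is the dual of Lemma \ref{lem:KEL2}. There is no genuine obstacle here. The one point needing care is that the last column of $\widetilde{A}_{C_{n+1}}$ must vanish in rows $1,\dots,n$, and this requires $n\ge K$; the argument does not use (DC).
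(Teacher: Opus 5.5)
Your proposal is correct and follows the same argument as the paper. You expand the first $n$ entries of $(I_{n+1}-\widetilde{A}_{C_{n+1}})z$ and use (RS), $A(i,n+1)=c_i$ for $i\le n$, to eliminate the $z_{n+1}$ term, which leaves exactly $(I_n-\widetilde{A}_{C_n})[z_j]_{j=1}^n$; you are also right that (DC) is not needed.
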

\begin{proof}
For $[d_i]_{i=1}^{n+1}\in (I_{n+1} -\widetilde{A}_{C_{n+1}})\Z^{n+1}$, take 
$[k_i]_{i=1}^{n+1} \in \Z^{n+1}$
 such that 
\begin{equation}\label{eq:diki}
d_i = k_i -\sum_{j=1}^{n+1} A(i,j) k_j + c_i \sum_{j=1}^{n+1} k_j, \quad i=1,\dots, n+1.
\end{equation} 
By (RS), we know 
$ A(i,n+1) = c_i$
for
$i=1,\dots, n,$
so that 
\begin{equation}\label{eq:diki3}
d_i = k_i -\sum_{j=1}^{n} A(i,j) k_j + c_i \sum_{j=1}^n k_j, \quad i=1,\dots, n,
\end{equation} 
showing that 
$[d_i]_{i=1}^{n}\in (I_{n} -\widetilde{A}_{C_{n}})\Z^n.$
Hence 
the restriction
$[d_i]_{i=1}^{n+1}\in \Z^{n+1}\rightarrow [d_i]_{i=1}^{n}\in \Z^{n}$
induces a homomorphism
$
\Cokernplus1 \rightarrow
\Cokern.
$
\end{proof}
We then have the following.
\begin{lemma}\label{lem:CDstrong}
Assume that a matrix $A$ is (RS) and (LI).
\begin{enumerate}
\renewcommand{\theenumi}{(\roman{enumi})}
\renewcommand{\labelenumi}{\textup{\theenumi}}
\item 
The diagram
\begin{equation}\label{eq:CDstrong}
\begin{CD}
\Exts(\wtT_{A_{n+1}}) @>{\iota_{n+1,n}^s}>> \Exts(\wtT_{A_{n}}) \\
@V{d_{A_{n+1}}}VV @VV{d_{A_{n}}}V \\
\Cokernplus1 @>{I_{n+1,n}^s}>>
\Cokern
\end{CD}
\end{equation}
is commutative.
\item
Furthermore if $A$ satisfies (DC), then  
the homomorphism 
$I_{n+1,n}^s:
\Cokernplus1 \rightarrow
\Cokern
$
is isomorphic, 
and hence  
$\iota_{n+1,n}^s: \Exts(\wtT_{A_{n+1}}) \rightarrow \Exts(\wtT_{A_{n}})$
is isomorphic.
\end{enumerate}
\end{lemma}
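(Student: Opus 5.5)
The plan is to handle (i) by choosing compatible trivial extensions, and (ii) by a direct computation showing that $I_{n+1}-\widetilde{A}_{C_{n+1}}$ is block lower triangular with a $1$ in the corner. The isomorphism statement for $\iota^s_{n+1,n}$ then follows from Proposition \ref{prop:Exts3}.

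For (i), let $\sigma:\wtT_{A_{n+1}}\rightarrow\calQ(H)$ be an extension and $\sigma_n$ its restriction to $\wtT_{A_n}$. I will compute $d_{A_{n+1}}([\sigma]_s)$ using a trivial extension $\tau:\wtT_{A_{n+1}}\rightarrow\calQ(H)$ with $\tau(S_jS_j^*)=\sigma(S_jS_j^*)$ for $j=1,\dots,n+1$ and $\tau(P_{n+2})=\sigma(P_{n+2})$. Its restriction $\tau_n:=\tau|_{\wtT_{A_n}}$ is again trivial, since it lifts to $\B(H)$ by restricting the lift of $\tau$. By \eqref{eq:Pn} it satisfies
\[
\tau_n(P_{n+1})=\tau(S_{n+1}S_{n+1}^*)+\tau(P_{n+2})=\sigma(S_{n+1}S_{n+1}^*)+\sigma(P_{n+2})=\sigma(P_{n+1}),
\]
and $\tau_n(S_jS_j^*)=\sigma_n(S_jS_j^*)$ for $j\le n$. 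So $\tau_n$ is an admissible choice for computing $d_{\sigma_n}$. For $i\le n$ the indices $d_i(\sigma_n,\tau_n)=\ind_{e_i}\sigma(S_i)\tau(S_i^*)=d_i(\sigma,\tau)$ are literally the same numbers. Hence $d_{A_n}(\iota^s_{n+1,n}[\sigma]_s)$ is the class of $[d_i(\sigma,\tau)]_{i=1}^n$, which equals $I^s_{n+1,n}(d_{A_{n+1}}[\sigma]_s)$, and the diagram commutes.

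For (ii), I will write out $I_{n+1}-\widetilde{A}_{C_{n+1}}$ using \eqref{eq:ACn}. Its $(i,n+1)$ entry for $i\le n$ is $c_i-A(i,n+1)=0$ by (RS). Its $(n+1,n+1)$ entry is $1-A(n+1,n+1)+c_{n+1}=1$ by (DC). Hence
\[
I_{n+1}-\widetilde{A}_{C_{n+1}}=\begin{bmatrix} I_n-\widetilde{A}_{C_n} & 0\\ r & 1\end{bmatrix},\qquad r=[c_{n+1}-A(n+1,j)]_{j=1}^n .
\]
Surjectivity of $I^s_{n+1,n}$ is clear, since any $[d_i]_{i=1}^n$ extends to $[d_1,\dots,d_n,0]$. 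For injectivity, suppose $[d_i]_{i=1}^n=(I_n-\widetilde{A}_{C_n})k$ with $k\in\Z^n$. Put $k_{n+1}:=d_{n+1}-rk$. Then $(I_{n+1}-\widetilde{A}_{C_{n+1}})[k;k_{n+1}]=[d_i]_{i=1}^{n+1}$, so the class vanishes in $\Cokernplus1$.

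Since $d_{A_n}$ and $d_{A_{n+1}}$ are isomorphisms by Proposition \ref{prop:Exts3}, commutativity of \eqref{eq:CDstrong} then gives that $\iota^s_{n+1,n}$ is an isomorphism. The only point needing real care is in (i): the restricted trivial extension must match $\sigma_n$ on $P_{n+1}$ as well as on the $S_jS_j^*$, and this is exactly where relation \eqref{eq:Pn} is used.
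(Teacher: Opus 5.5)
Your proof is correct and follows the paper's argument. In (i) you make explicit, via $P_{n+1}=S_{n+1}S_{n+1}^*+P_{n+2}$, that the restriction of $\tau$ is an admissible trivial extension for $\sigma_n$, a point the paper leaves implicit. In (ii) your block lower-triangular form of $I_{n+1}-\widetilde{A}_{C_{n+1}}$ with corner entry $1$ repackages the paper's explicit choice $k_{n+1}=d_{n+1}+\sum_{j=1}^n A(n+1,j)k'_j-c_{n+1}\sum_{j=1}^n k'_j$, which is exactly your $d_{n+1}-rk$.
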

\begin{proof}
(i)
As the map
$d_{A_{n+1}}: \Exts(\wtT_{A_{n+1}}) \rightarrow \Cokernplus1$
is given by 
$d_{A_{n+1}}([\sigma]_s) = [d_i(\sigma, \tau)]_{i=1}^{n+1}$
for an extension
$\sigma:\wtT_{A_{n+1}}\rightarrow \calQ(H)$,
the homomorphism
$\iota_{n+1,n}^s:  \Exts(\wtT_{A_{n+1}}) \rightarrow \Exts(\wtT_{A_n})$
is given by the restriction 
$
[d_i(\sigma, \tau)]_{i=1}^{n+1}\rightarrow [d_i(\sigma, \tau)]_{i=1}^{n}.$
This shows that the diagram \eqref{eq:CDstrong} is commutative.

(ii)
%We are assuming that $A(i,n+1) = c_i$ for $i=1,\dots,n$,
%and furthermore (UT) admits the condition 
%$A(n+1,n+1) = c_{n+1}$.
For $[d_i]_{i=1}^{n+1} \in \Z^{n+1}$,
suppose that 
$I_{n+1,n}^s([d_i]_{i=1}^{n+1}) = 0$
in $\Cokern,$ 
which means that 
 one may find $[k'_i]_{i=1}^{n} \in \Z^{n}$
 such that 
\begin{equation}\label{eq:di4.14}
d_i = k'_i - \sum_{j=1}^n A(i,j) k'_j + c_i \sum_{j=1}^n  k^\prime_j, \qquad i=1,\dots,n. 
 \end{equation}
By putting 
$k_i: =  k'_i$
for $ i=1,\dots, n$ and 
\begin{equation}
k_{n+1} :=  d_{n+1} +  \sum_{j=1}^n A(n+1,j) k'_j - c_{n+1} \sum_{j=1}^n k_j' \label{eq:kn+1}
\end{equation}
and using the equalities
$A(i,n+1) = c_i$ for $i=1,\dots,n$ from (RS),
 we have for $i=1,\dots, n$
$$
d_i 
=   k_i - \sum_{j=1}^{n+1} A(i,j) k_j  +c_i \sum_{j=1}^{n+1} A(i,j) k_j.
$$  
For $i=n+1$, the condition $A(n+1, n+1) = c_{n+1}$ from (DC) with \eqref{eq:kn+1} 
implies
 \begin{align*}
d_{n+1} 
%= & k_{n+1} - \sum_{j=1}^{n} A(n+1,j) k'_j + c_{n+1} \sum_{j=1}^n k'_j \\
=  k_{n+1} - \sum_{j=1}^{n+1} A(n+1,j) k_j + c_{n+1} \sum_{j=1}^{n+1} k_j, 
 \end{align*}
showing 
that $I_{n,n+1}^s: 
\Cokernplus1 \rightarrow
\Cokern
$
is injective.
As the surjectivity of the map
$I_{n+1,n}^s:
\Cokernplus1 \rightarrow
\Cokern
$
is obvious, the map $I_{n+1, n}$ is isomorphic.
\end{proof}
We similarly have homomorphisms
$$
\Extw(\wtT_{A_{n+1}}) \rightarrow 
\Extw(\wtT_{A_n}), \quad
\Coker([I_{n+1} - A_{n+1} | -C_{n+1}]) \to
\Coker([I_{n} - A_{n} | -C_{n}])
$$
which are  written as $\iota_{n+1,n}^w, \, I_{n n+1}^w$, respectively,
 to obtain the following lemma.
%By Proposition \ref{prop:Exts3},
%we similarly have the following lemma.
\begin{lemma}\label{lem:CDweak}
Assume that a matrix $A$ is (RS) and (LI).
\begin{enumerate}
\renewcommand{\theenumi}{(\roman{enumi})}
\renewcommand{\labelenumi}{\textup{\theenumi}}
\item 
The diagram
\begin{equation}\label{eq:CDweak}
\begin{CD}
\Extw(\wtT_{A_{n+1}}) @>{\iota_{n+1,n}^w}>> \Extw(\wtT_{A_{n}}) \\
@V{d_{A_{n+1}}}VV @VV{d_{A_{n}}}V \\
\Z^{n+1}/[I_{n+1} - A_{n+1} | -C_{n+1}]\Z^{n+2} 
@>{I_{n+1,n}^w}>> 
\Z^{n}/[I_n -A_n | -C_n] \Z^{n+1}
\end{CD}
\end{equation}
is commutative.
\item
Furthermore if $A$ satisfies (DC), then  
the homomorphism of the bottom arrow in the diagram
\eqref{eq:CDstrong}
is isomorphic, 
and hence  
$\iota_{n+1,n}^w: \Extw(\wtT_{A_{n+1}}) \rightarrow \Extw(\wtT_{A_{n}})$
is isomorphic.
\end{enumerate}
\end{lemma}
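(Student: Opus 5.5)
We prove Lemma \ref{lem:CDweak} along the lines of Lemma \ref{lem:CDstrong}, working with the column-reduced presentation of the weak groups. The first task is to describe the vertical maps concretely. By Proposition \ref{prop:Exts3}, the isomorphism $\Extw(\wtT_{A_n})\cong \Z^n/[I_n-A_n\mid -C_n]\Z^{n+1}$ is induced by the same index vector $[\sigma]\mapsto [d_i(\sigma,\tau)]_{i=1}^n$. Passing from strong to weak equivalence replaces $\pi(U)$ by an arbitrary unitary $u\in\calQ(H)$ commuting with $e_1,\dots,e_{n+1}$. In the computation of Lemma \ref{lem:Exts1} this removes the constraint $\sum_j k_j=0$. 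Hence the indeterminacy of $[d_i]$ becomes the full image of $[I_n-A_n\mid -C_n]$.

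For (i), let $\sigma:\wtT_{A_{n+1}}\to\calQ(H)$ be an extension with trivial comparison extension $\tau$ as in Section 5. The restrictions $\sigma_n=\sigma|_{\wtT_{A_n}}$ and $\tau_n=\tau|_{\wtT_{A_n}}$ satisfy $\tau_n(S_jS_j^*)=\sigma(S_jS_j^*)$ for $j\le n$. They also satisfy $\tau_n(P_{n+1})=\tau(S_{n+1}S_{n+1}^*+P_{n+2})=\sigma(P_{n+1})$. So $\tau_n$ is an admissible comparison extension for $\sigma_n$, and $d_i(\sigma_n,\tau_n)=d_i(\sigma,\tau)$ for $i\le n$. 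Thus $\iota^w_{n+1,n}$ corresponds to deleting the last coordinate. It remains to check that $I^w_{n+1,n}$ is well defined, i.e. that deletion maps the column space of $[I_{n+1}-A_{n+1}\mid -C_{n+1}]$ into that of $[I_n-A_n\mid -C_n]$. By (RS), $A(i,n+1)=c_i$ for $i\le n$. So the first $n$ rows of $[I_{n+1}-A_{n+1}\mid -C_{n+1}]\,k$ equal
\[
[I_n-A_n\mid -C_n]\,(k_1,\dots,k_n,k_{n+1}+k_{n+2})^t .
\]
This gives well-definedness and commutativity of \eqref{eq:CDweak}.

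For (ii), surjectivity of $I^w_{n+1,n}$ is clear, since we may pad with $d_{n+1}=0$. The real content is injectivity, which is where (DC) enters. Suppose $(d_1,\dots,d_n)=[I_n-A_n\mid -C_n]\,k'$ with $k'\in\Z^{n+1}$. We set $k_j=k'_j$ for $j\le n$ and split $k'_{n+1}=k_{n+1}+k_{n+2}$. By the computation above, the first $n$ coordinates of $[I_{n+1}-A_{n+1}\mid -C_{n+1}]\,k$ then agree with $d_1,\dots,d_n$ for any such split. Using $A(n+1,n+1)=c_{n+1}$, the $(n+1)$-th coordinate is
\[
-\sum_{j\le n}A(n+1,j)k_j+(1-c_{n+1})k_{n+1}-c_{n+1}k_{n+2}.
\]
This is where care is needed, and it is the step I expect to be the main obstacle: the split must be chosen so that this coordinate equals $d_{n+1}$. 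If $c_{n+1}=0$, choose $k_{n+1}=d_{n+1}+\sum_{j\le n} A(n+1,j)k_j$ and $k_{n+2}=k'_{n+1}-k_{n+1}$. If $c_{n+1}=1$, the coordinate is $-\sum_{j\le n} A(n+1,j)k_j-k_{n+2}$, and $k_{n+2}$ can be solved for in the same way. In either case the coefficient is $\pm1$, so the equation is solvable over $\Z$. Hence $(d_1,\dots,d_{n+1})$ lies in the image, which proves injectivity. Combined with (i), this shows $\iota^w_{n+1,n}$ is an isomorphism.
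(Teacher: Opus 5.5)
Your proof is correct and is essentially the argument the paper intends: the paper gives no separate proof, saying only that the lemma follows ``similarly'' to Lemma \ref{lem:CDstrong}, and you carry out that analogue explicitly. You restrict index vectors, obtain well-definedness of $I^w_{n+1,n}$ from (RS), and get injectivity by solving for the extra coordinate using (DC), with the only new ingredient being the splitting $k'_{n+1}=k_{n+1}+k_{n+2}$ and the case distinction on $c_{n+1}$, both of which you handle correctly.
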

 It is well-known that 
 for a separable unital nuclear $C^*$-algebra $\A$, one may
  identify the $\Exts(\A)$-group (resp. $\Extw(\A)$-group)
 with the first strong (resp. weak) extension group
 $\Exts^1(\A)$ (resp. $\Extw^1(\A)$) (see \cite{Blackadar}, \cite{Skandalis}).
 % Let us recall the cyclic six term exact sequence for 
 %$\Ext$-groups for a separable unital nuclear $C^*$-algebras $\A$.
 The strong extension groups $\Exts^i(\A)$ and the weak extension groups
 $\Extw^i(\A)$  are recognized as the Kasparov's $\sqK$-groups
 such that
 \begin{equation*}
 \Exts^i(\A) = \sqK^{i+1}(C_\A, \mathbb{C}), \qquad
 \Extw^i(\A) = \sqK^{i}(\A, \mathbb{C}), \qquad
 i=0,1, 
 \end{equation*}
 where
 $C_\A$ is the mapping cone algebra
 $\{ x \in C(0,1]\otimes \A \mid x(1) \in \mathbb{C} 1_\A\}$ (see \cite{Blackadar}, \cite{Skandalis}).
 By applying the $\sqK( \, \, \, , \mathbb{C})$-functor to the short exact sequence
 $
 0 \rightarrow C_0(0,1)\otimes \A \rightarrow C_\A \rightarrow \A \rightarrow 0,
 $ 
we have a cyclic six term exact sequence 
 \begin{equation}\label{eq:6termExtforA}
\begin{CD}
\Exts^0(\A) @>>> \Extw^0(\A) @>>>  \Z \\
@AAA @. @V{\iota_{\A}}VV \\
0 @<<<  \Extw^1(\A) @<<< \Exts^1(\A)
\end{CD}.
\end{equation}
By using the above observation, we have the following.
\begin{proposition}\label{prop:mainExtsExtw1}
Assume that the infinite matrix $A = [A(i,j)]_{i,j\in \N}$
is (DRS) and (LI).
Let $A_n=[A(i,j)]_{i,j=1}^n$ be the $n \times n$ matrix defined by $A$.
Define the $n \times n$ matrix 
$
\widetilde{A}_{C_n}
$ by
\eqref{eq:ACn}.
Then we have for $n \ge K$
\begin{enumerate}
\renewcommand{\theenumi}{(\roman{enumi})}
\renewcommand{\labelenumi}{\textup{\theenumi}}
%\item $\Exts(\OELA) 
%\cong \varprojlim \Exts(\wtT_{A_n}) 
%\cong  \varprojlim \Z^n / ( I_n - \widetilde{A}_{C_n}) \Z^n,$ %
% $\Extw(\OELA) 
%\cong  \varprojlim \Extw(\wtT_{A_n}) 
%\cong  \varprojlim \Z^n / [I_n -A_n | -C_n]\Z^{n+1}.$ %%
%
%\item Furthermore if $A$ satisfies (DC) and hence (UT),
%then there exists $N \in \N$ such that for $n \ge N$
\item $\Extw(\OELA) 
\cong \Extw(\wtT_{A_n}) 
\cong  \Z^n / [I_n -A_n | -C_n]\Z^{n+1}.$ 
\item $\Exts(\OELA) 
\cong  \Exts(\wtT_{A_n}) 
\cong  \Z^n / ( I_n - \widetilde{A}_{C_n}) \Z^n.$ 
\end{enumerate}
\end{proposition}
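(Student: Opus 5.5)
The strategy is to pass from the finite stages $\wtT_{A_n}$ to the inductive limit $\OELA=\varinjlim \wtT_{A_n}$ of Lemma \ref{lem:EL1}. We will use the Milnor $\varprojlim^1$ sequence for the contravariant functors $\Exts(\,\cdot\,)=\sqK^0(C_{\,\cdot\,},\mathbb{C})$ and $\Extw(\,\cdot\,)=\sqK^1(\,\cdot\,,\mathbb{C})$. Since the inclusions $\wtT_{A_n}\hookrightarrow\wtT_{A_{n+1}}$ are unital, the mapping cones also form an inductive system with $C_{\OELA}=\varinjlim C_{\wtT_{A_n}}$. All algebras involved are separable, nuclear and satisfy the UCT, being graph algebras and their cones. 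For each of the two theories there is therefore a short exact sequence
\[
0\to \varprojlim{}^1 \Ext_*^{0}(\wtT_{A_n}) \to \Ext_*^{1}(\OELA)\to \varprojlim \Ext_*^{1}(\wtT_{A_n})\to 0,
\]
where $*$ is s or w. The map onto the projective limit is exactly restriction $[\sigma]_s\mapsto([\sigma_n]_s)_n$, and similarly for the weak classes.

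The key input is Lemmas \ref{lem:CDstrong}(ii) and \ref{lem:CDweak}(ii). Under (DC) the connecting maps $\iota^s_{n+1,n}$ and $\iota^w_{n+1,n}$ are isomorphisms for $n\ge K$. Hence both projective systems $\{\Exts(\wtT_{A_n})\}$ and $\{\Extw(\wtT_{A_n})\}$ are eventually constant. Their projective limits are then $\Exts(\wtT_{A_n})$ and $\Extw(\wtT_{A_n})$ for any fixed $n\ge K$, and the projection to the $n$-th stage is an isomorphism. Combining this with Proposition \ref{prop:Exts3} and Lemma \ref{lem:wExtEL1} gives the identifications with $\Z^n/(I_n-\widetilde A_{C_n})\Z^n$ and $\Z^n/[I_n-A_n\mid -C_n]\Z^{n+1}$.

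The remaining point, and the main obstacle, is the vanishing of the $\varprojlim^1$ terms. These are built from the degree-zero groups $\Exts^0(\wtT_{A_n})$ and $\Extw^0(\wtT_{A_n})$. I would show these systems are also eventually constant, i.e.\ Mittag-Leffler, so that $\varprojlim^1=0$.

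For the weak theory this follows from the UCT and Lemmas \ref{lem:KEL3}(ii) and \ref{lem:KEL5}(ii). The UCT expresses $\Extw^0(\wtT_{A_n})=\sqK^0(\wtT_{A_n},\mathbb{C})$ naturally through $\Hom(\K_0(\wtT_{A_n}),\Z)$ and $\Ext^1_\Z(\K_1(\wtT_{A_n}),\Z)$. Since the inclusions induce isomorphisms on $\K_*$ for $n\ge K$, they induce isomorphisms on $\Extw^0$.

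For the strong theory I would apply the six term sequence \eqref{eq:6termExtforA} naturally in $n$ and use the five lemma. The restriction maps are unital, so they are compatible with the terms $\Z$ and $\iota_{\wtT_{A_n}}$. The weak groups in both degrees and $\Exts^1$ are eventually constant, so $\Exts^0(\wtT_{A_n})$ is eventually constant as well. Alternatively one may argue directly that $\K_*(C_{\wtT_{A_n}})$ stabilizes, via the six term sequence of the cone and the stabilization of $\K_*(\wtT_{A_n})$, and then apply the UCT once more. Either route gives $\varprojlim^1=0$, and (i) and (ii) follow.
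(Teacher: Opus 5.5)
Your proof is correct, but it is organized differently from the paper's.

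The paper applies the Milnor sequence only to the weak theory, on the algebras $\wtT_{A_n}$ themselves. It disposes of the $\varprojlim^1$ term by countability rather than Mittag-Leffler. Since $\K_*(\OELA)$ is finitely generated, $\Extw^*(\OELA)$ is countable by the UCT, and Schochet's result (a nonzero $\varprojlim^1$ of countable groups is uncountable) forces the $\varprojlim^1$ term to vanish.

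For (ii) the paper uses neither mapping cones nor a second Milnor sequence. It applies the five lemma to the map of exact sequences $\Extw^0\to\Z\to\Exts\to\Extw\to 0$ from $\OELA$ to $\wtT_{A_n}$. This shows directly that restriction $\Exts(\OELA)\to\Exts(\wtT_{A_n})$ is an isomorphism, so Lemma \ref{lem:CDstrong}(ii) is not actually needed there.

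Your route costs more setup:
\begin{itemize}
\item the cones must lie in the bootstrap class;
\item you need $C_{\OELA}=\varinjlim C_{\wtT_{A_n}}$;
\item the identification $\Exts(\cdot)\cong\sqK^0(C_{\cdot},\mathbb{C})$ must be natural under restriction;
\item both degree-zero towers must stabilize.
\end{itemize}
In exchange it avoids the countability dichotomy for $\varprojlim^1$. It also makes explicit the isomorphism $\Extw^0(\OELA)\cong\Extw^0(\wtT_{A_n})$, which the paper's five-lemma step needs (surjectivity of the left vertical map). The paper attributes that isomorphism to Lemma \ref{lem:CDweak}, which is only a degree-one statement. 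Your UCT argument via Lemmas \ref{lem:KEL3}(ii) and \ref{lem:KEL5}(ii) is exactly what fills this in; the paper supplies it only later, in the proof of Theorem \ref{thm:6termforExtOA}.
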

\begin{proof}
 Assume that the infinite matrix 
 $A = [A(i,j)]_{i,j\i \N}$
is  (DRS) and (LI).
Note that the $\K$-groups $\K_*(\OELA)$ of $\OELA$ 
are finitely generated by Proposition \ref{prop:KformulaEL}.
 We identify $\Ext_{*}(\OELA)$ with $\Ext_*^1(\OELA)$.
% Hence by the UCT, the weak extension group
% $\Extw(\OELA)$ is finitely generated.
% Since there exists a  sequence
% $\Z\rightarrow \Exts(\OELA) \rightarrow \Extw(\OELA)$
% which is exact at the middle, 
% $\Exts(\OELA)$ is also finitely generated.

Since $\Extw^{i}(\OELA)\cong \sqK^i(\OELA), \mathbb{K}), \;(i=0, 1)$ are countable groups by the UCT (cf.\cite{Blackadar}), \cite[Proposition 3.5]{Schochet}, \cite[Theorem 21.5.2]{Blackadar} and Lemma \ref{lem:CDweak} show that
 \[\Extw^i(\OELA)=\varprojlim \Extw^i(\wtT_{A_n})\cong \Extw^i(\wtT_{A_n})
 \quad\text{ for } n \ge K,\]
showing the formula (i).
 Using the exact sequences \eqref{eq:6termExtforA} for  $\OELA$ and $\wtT_{A_n}$,
 one has the following commutative diagram
 \[\xymatrix{
\Extw^0(\OELA)\ar[r]\ar[d]^{\cong}&\mathbb{Z}\ar@{=}[d]\ar[r]&\Ext_s(\OELA)\ar[d]\ar[r]&\Extw(\OELA)\ar[d]^{\cong}\ar[r]&0\\
\Extw^0(\wtT_{A_n})\ar[r]&\mathbb{Z}\ar[r]&\Exts(\wtT_{A_n})\ar[r]&\Extw(\wtT_{A_n})\ar[r]&0
 }\]
 with exact horizontal sequences.
Thus, the middle vertical arrow
\[\Exts(\OELA)\ni [\sigma]_s\mapsto [\sigma |_{\wtT_{A_n}}]_s\in \Exts (\wtT_{A_n})\]
is an isomorphism, showing the formula (ii).
 %Hence 
 %the Milner $\varprojlim^1$-groups
 %$\varprojlim^1\Exts(\wtT_{A_n})$
 %and $\varprojlim^1\Extw(\wtT_{A_n})$
 %both vanish (cf. \cite[Proposition 3.5]{Schochet}), so that 
% we have
 %$\Exts(\OELA)=\varprojlim \Exts(\wtT_{A_n})$
 %and
% $\Extw(\OELA)=\varprojlim \Extw(\wtT_{A_n})$
 %(cf. \cite[Proposition 3.5]{Schochet}).
%Now the matrix $A$ is (DRS), so it satisfies (DC).
%By Lemma \ref{lem:CDstrong} and Lemma \ref{lem:CDweak}, 
%we have the desired formulas (i) and (ii). 
\end{proof}

%%%%%%%%%%%%%%%%%%%%%%%%%%%%%%%%%%%%%%%%%%%%%%%%%%%%%%%%%%%%%%%%%%%%%%%%%%%%%%%%%%%%%%%%%%%%%%%%%%
\section{ The cyclic six-term exact sequence for $\Ext_*(\OELA)$} \label{sect:cyclicsixtermforExt}
%%%%%%%%%%%%%%%%%%%%%%%%%%%%%%%%%%%%%%%%%%%%%%%%%%%%%%%%%%%%%%%%%%%%%%%%%%%%%%%%%%%%%%%%%%%%%%%%
 In this section, we will compute the cyclic six term exact sequence 
 \eqref{eq:6termExtforA} for the Exel--Laca algebra $\OELA$.
The right down arrow in \eqref{eq:6termExtforA}
is denoted by $\iota_A$ for $\A = \OELA$.
 The computation plays a key role to detect the reciprocal dual
 $\widehat{\OELA}$ of $\OELA$. 
 
  %%%%%%%%%%%%%%%%%%%%%%%%%%%%%%%%%%%%%%%%%%%%%%%%%%%%
  
 Assume that the matrix $A$ is (RS) and (LI).
  We fix $n \in \N$ with $n \ge K$.
 For $m \in \Z$, take a unitary $u_m \in Q(H)$ of Fredholm index $m$.
 Take a trivial extension $\tau:\wtT_{A_n} \rightarrow Q(H)$, 
 and consider an extension
 $\sigma_m : = \Ad(u_m) \circ \tau : \wtT_{A_n} \rightarrow Q(H)$.
 Then the map $\iota_{A_n} : \Z \rightarrow \Exts^1(\wtT_{A_n})$
 defined by 
 $\iota_{A_n}(m) := [\sigma_m]_s \in \Exts^1(\wtT_{A_n})$
 gives rise to  a homomorphism
 such that
 $\Exts^1(\wtT_{A_n})/\Z \iota_{A_n}(1) \cong \Extw^1(\wtT_{A_n})$.
The following lemma may be proved by a completely similar fashion to the proof
of \cite[Proposition 3.1]{MaAnalMath2024}.
 \begin{lemma}[{\cite[Proposition 3.1]{MaAnalMath2024}}]\label{lem:iotahat}
 Define
 $\hat{\iota}_{A_n}: \Z \rightarrow \Cokern$
 by setting
 $\hat{\iota}_{A_n}(m) := [I_n -A_n |-C_n ] [k_i]_{i=1}^{n+1}$ 
 with 
 $ m= \sum_{i=1}^{n+1}k_i.$ 
 Then we have
\begin{enumerate}
\renewcommand{\theenumi}{(\roman{enumi})}
\renewcommand{\labelenumi}{\textup{\theenumi}}
\item $\hat{\iota}_{A_n}(m)$ 
does not depend on the choice of $[k_i]_{i=1}^{n+1}$ as long as 
 $ m= \sum_{i=1}^{n+1}k_i.$ 
\item The diagram
\begin{equation}
\begin{CD}
\Z @>{\iota_{A_n}}>> \Exts^1(\wtT_{A_n}) \\ 
\| @.  @VV{d_{A_n}}V \\
\Z @>{\hat{\iota}_{A_n}}>>  \Cokern  
\end{CD}
\end{equation}
is commutative, that is, $d_{A_n}(\iota_{A_n}(m)) = \hat{\iota}_{A_n}(m)$
for $m \in \Z$.
%where $\iota_{A_n}(m) = [\sigma_m]_s$.
\end{enumerate} 
 \end{lemma}
We know the following lemma.
\begin{lemma}\label{lem:iotaAn2}
Assume that the matrix $A$ is (RS) and (LI).
\begin{enumerate}
\renewcommand{\theenumi}{(\roman{enumi})}
\renewcommand{\labelenumi}{\textup{\theenumi}}
\item
The diagrams 
\begin{equation}\label{eq:CDExtsCoker1}
\begin{CD}
\Z @>{\iota_{A_{n+1}}}>> \Exts^1(\wtT_{A_{n+1}}) 
 @>{d_{A_{n+1}}}>> \Cokernplus1 \\
\| @.  @V{\iota_{n+1,n}^s}VV @V{I_{n+1,n}^s}VV  \\ 
\Z @>{\iota_{A_n}}>> \Exts^1(\wtT_{A_{n}}) 
 @>{d_{A_n}}>> \Cokern \\
\end{CD}
\end{equation}
are commutative.
\item
\begin{equation}\label{eq:Exts1iotahatA1}
(\Exts^1(\wtT_{A_n}), \iota_{A_n}(1))
=
(\Cokern, [-C_n]).
\end{equation}
\item 
Furthermore if $A$ satisfies (DC), then
\begin{equation}\label{eq:EXTsiotaA1}
(\Exts^1(\OELA), \iota_{A}(1))
=
(\Cokern, [-C_n]).
\end{equation}
\end{enumerate}
\end{lemma}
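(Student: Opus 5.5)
Part (i) is the compatibility of the maps $\iota_{A_n}$ with restriction. I will split it into two squares and treat each separately.

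The right-hand square of \eqref{eq:CDExtsCoker1} is exactly Lemma \ref{lem:CDstrong}(i), so nothing new is needed there.

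For the left-hand square I will check directly that $\iota_{n+1,n}^s(\iota_{A_{n+1}}(m)) = \iota_{A_n}(m)$.
\begin{itemize}
\item Take a trivial extension $\tau:\wtT_{A_{n+1}}\to\calQ(H)$ and a unitary $u_m\in\calQ(H)$ of index $m$.
\item The restriction of $\Ad(u_m)\circ\tau$ to $\wtT_{A_n}$ is $\Ad(u_m)\circ(\tau|_{\wtT_{A_n}})$.
\item Since $\wtT_{A_n}\hookrightarrow\wtT_{A_{n+1}}$ is unital, $\tau|_{\wtT_{A_n}}$ is again a unital trivial extension.
\item By Lemma \ref{lem:iotahat} and the definition of $\iota_{A_n}$, the class $[\Ad(u_m)\circ\tau|_{\wtT_{A_n}}]_s$ is independent of the chosen trivial extension, because any two trivial extensions are unitarily equivalent by Voiculescu's theorem via a unitary of $\B(H)$.
\item Hence this class equals $\iota_{A_n}(m)$.
\end{itemize}
As a cross-check I will also verify commutativity of the outer rectangle at the level of $\hat\iota$:
\begin{itemize}
\item Compute $\hat\iota_{A_{n+1}}(m)$ using $k=(k_1,\dots,k_{n+2})$ with $\sum k_i=m$.
\item Restrict to the first $n$ coordinates. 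By (RS), $A(i,n+1)=c_i$ for $i\le n$, so the $(n+1)$-th and $(n+2)$-th columns of $[I_{n+1}-A_{n+1}\mid -C_{n+1}]$ restricted to rows $1,\dots,n$ both equal $-C_n$.
\item Therefore the restriction equals $[I_n-A_n\mid -C_n](k_1,\dots,k_n,k_{n+1}+k_{n+2})$, which is $\hat\iota_{A_n}(m)$.
\end{itemize}

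For (ii), I will identify $\Exts^1(\wtT_{A_n})$ with $\Cokern$ via $d_{A_n}$ (Proposition \ref{prop:Exts3}). Lemma \ref{lem:iotahat}(ii) then gives $d_{A_n}(\iota_{A_n}(1))=\hat\iota_{A_n}(1)$. To compute $\hat\iota_{A_n}(1)$, choose $k=e_{n+1}$, which satisfies $\sum k_i=1$. This gives $[I_n-A_n\mid -C_n]e_{n+1}=-C_n$, so the class is $[-C_n]$, independent of choices by Lemma \ref{lem:iotahat}(i).

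For (iii), assume (DC).
\begin{itemize}
\item By Proposition \ref{prop:mainExtsExtw1}(ii), restriction $\Exts(\OELA)\to\Exts(\wtT_{A_n})$ is an isomorphism for $n\ge K$.
\item The unit-extension map is natural under this restriction: exactly as in the left square of (i), $\iota_A(m)$ is represented by $\Ad(u_m)\circ\tau$ with $\tau$ a trivial extension of $\OELA$, and its restriction to $\wtT_{A_n}$ represents $\iota_{A_n}(m)$.
\item Equivalently, this is the commutativity of the $\Z$-square in the diagram used in the proof of Proposition \ref{prop:mainExtsExtw1}.
\item Composing the isomorphism $\Exts(\OELA)\cong\Exts(\wtT_{A_n})$ with (ii) yields \eqref{eq:EXTsiotaA1}.
\end{itemize}

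The main point needing care is that restriction of a trivial extension is trivial and that the class $\Ad(u_m)\circ\tau$ does not depend on $\tau$. Both follow from unitality of the embeddings together with Voiculescu's theorem, so I expect no serious obstacle.
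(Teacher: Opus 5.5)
Your proposal is correct and follows the paper's proof. The right square of (i) is Lemma~\ref{lem:CDstrong}, the left square is the direct restriction argument, (ii) comes from choosing $k=e_{n+1}$ in $\hat{\iota}_{A_n}(1)$, and (iii) transfers (ii) through the restriction isomorphism of Proposition~\ref{prop:mainExtsExtw1}; the only difference is that in (iii) the paper states the compatibility as $I^s_{n+1,n}([-C_{n+1}])=[-C_n]$ along the inverse system, whereas you restrict $\iota_{\OELA}(1)$ directly from $\OELA$ to $\wtT_{A_n}$, which is the same argument made more explicit.
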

\begin{proof}
(i) The commutativity of the left square in \eqref{eq:CDExtsCoker1} is direct.
The commutativity of the right square comes from Lemma \ref{lem:CDstrong}.

(ii)
Put $k_i =0$ for $i=1,\dots, n$ and $k_{n+1} =1$, so that 
$m:= \sum_{i=1}^{n+1} k_i =1$.
We then have
\begin{equation*}
\hat{\iota}_{A,n}(1) = [I_n -A_n | -C_n][k_i]_{i=1}^{n+1} = [-C_n]
\end{equation*}
so that we get the desired assertion.

(iii)
Since the map $I_{n,n+1}^s: \Z^{n+1}\rightarrow \Z^n$ 
is defined by removing the bottom entry of the vectors in $\Z^{n+1}$,
we have 
$I_{n,n+1}^s([-C_{n+1}]) = [-C_n]$
and  the desired equality \eqref{eq:EXTsiotaA1}.
\end{proof}
It is direct to see the following lemma by linear algebra.
\begin{lemma}\label{lem:KerwAnn+1}
We have the cyclic six term exact sequence:
\begin{equation}\label{eq:6termAnn+1}
\begin{CD}
\Ker(I_n - \widetilde{A}_{C_n}) @>{j_{A_n}}>> \Ker([I_n - A_n | -C_n]) @>{s_{A_n}}>>  \Z \\
@AAA @. @V{\hat{\iota}_{A_n}}VV \\
0 @<<< \Z^n/[I_n -A_n|-C_n]\Z^{n+1} @<<< \Cokern
\end{CD}
\end{equation}
where
%\hat{\iota}_{A_n}:  \Z \rightarrow \Cokern, \qquad
$s_{A_n}: \Ker([I_n - A_n | -C_n]) \rightarrow \Z 
$ and  
%\iota_1:  \Z \rightarrow \Ker(I_n - \widetilde{A}_{C_n}), \qquad
$
j_{A_n} :  \Ker(I_n - \widetilde{A}_{C_n}) \rightarrow \Ker([I_n - A_n | -C_n]) 
$
are defined by
\begin{gather*}
%\hat{\iota}_A(m) 
%:=  [I_n - A_n | -C_n] 
%\begin{bmatrix}
%k_1\\
%\vdots \\
%k_{n+1}
%\end{bmatrix}
%\text{ for } m = \sum_{i=1}^{n+1} k_i, \qquad
s_{A_n}([l_i]_{i=1}^{n+1}) 
:= \sum_{i=1}^{n+1} l_i
\quad
\text{and}
\quad
%\iota_1(n)
%:=  
%\begin{bmatrix}
%0\\
%\vdots \\
%0 \\
%n
%\end{bmatrix} \in \Z^{n+1},\qquad 
j_{A_n}([l_i]_{i=1}^{n}) 
:=   
\begin{bmatrix}
l_1\\
\vdots \\
l_n \\
-\sum_{i=1}^n l_1
\end{bmatrix} \in \Z^{n+1}, 
\end{gather*}
respectively, and the bottom map
$\Z^n/[I_n -A_n|-C_n]\Z^{n+1} \leftarrow \Cokern
$
is the homomorphism induced by the identity on $\Z^n$.
\end{lemma}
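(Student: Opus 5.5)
The plan is to obtain \eqref{eq:6termAnn+1} as the long exact sequence (snake lemma) of a short exact sequence of two-term complexes of free abelian groups. Write $M := [I_n - A_n \mid -C_n] : \Z^{n+1}\to\Z^n$. The basic observation, already implicit in the computation preceding \eqref{eq:ACn}, is that
\[
M\,(I_{n+1}-R_{n+1}) = [\, I_n - \widetilde{A}_{C_n} \mid 0\,].
\]
Equivalently, viewing $j := j_{A_n}:\Z^n\to\Z^{n+1}$, $l\mapsto (l_1,\dots,l_n,-\sum_{i=1}^n l_i)$, as a map on all of $\Z^n$, we have
\[
I_n-\widetilde{A}_{C_n} = M\circ j .
\]
I would verify this directly: the $i$th entry of $M j(l)$ is $l_i-\sum_j A(i,j)l_j + c_i\sum_j l_j$, which is the $i$th entry of $(I_n-\widetilde{A}_{C_n})l$.

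Next, I would note that $j$ is injective and that its image is exactly $\Ker(s)$, where $s:\Z^{n+1}\to\Z$ is the coordinate sum. Hence
\[
0\longrightarrow \Z^n \xrightarrow{\ j\ } \Z^{n+1} \xrightarrow{\ s\ } \Z \longrightarrow 0
\]
is exact, and it is split by $m\mapsto m e_{n+1}$. I would regard this as a short exact sequence of complexes
\[
0\to (\Z^n \xrightarrow{M j}\Z^n)\to (\Z^{n+1}\xrightarrow{M}\Z^n)\to(\Z\to 0)\to 0 .
\]
In the second degree the maps are the identity of $\Z^n$ and then $0$. Commutativity of the squares is exactly the identity $Mj=I_n-\widetilde{A}_{C_n}$.

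The snake lemma then gives the exact sequence
\[
0\to \Ker(I_n-\widetilde{A}_{C_n}) \xrightarrow{j} \Ker M \xrightarrow{s} \Z \xrightarrow{\partial} \Cokern \to \Z^n/M\Z^{n+1}\to 0 .
\]
The final $0$ comes from the zero cokernel of $\Z\to0$, and the initial $0$ comes from the kernel of the identity map in degree two. Here $j$ and $s$ are the restrictions $j_{A_n}$ and $s_{A_n}$, and the map $\Cokern\to\Z^n/M\Z^{n+1}$ is induced by the identity of $\Z^n$, as in the statement.

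The remaining step, which is the only one needing care, is to identify the connecting map $\partial$ with $\hat{\iota}_{A_n}$ from Lemma \ref{lem:iotahat}. By the snake lemma recipe, one lifts $m\in\Z$ to any $k\in\Z^{n+1}$ with $\sum_i k_i=m$, applies $M$, and takes the class of $Mk$ in $\Z^n/(I_n-\widetilde{A}_{C_n})\Z^n$. This is precisely the definition of $\hat{\iota}_{A_n}(m)$. Independence of the lift, which is Lemma \ref{lem:iotahat}(i), is recovered along the way: two lifts differ by $j(l)$, and $Mj(l)=(I_n-\widetilde{A}_{C_n})l$.

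Drawn cyclically, with the leftmost $0$ placed at the lower left, this exact sequence is the diagram \eqref{eq:6termAnn+1}.
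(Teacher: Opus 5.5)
Your proof is correct. The paper only says the lemma is ``direct by linear algebra'', and your snake-lemma argument supplies those details: you apply it to $0\to\Z^n\xrightarrow{j_{A_n}}\Z^{n+1}\xrightarrow{s}\Z\to 0$ lying over $\Z^n\xrightarrow{\id}\Z^n\to 0$, using the factorization $I_n-\widetilde{A}_{C_n}=[I_n-A_n\mid -C_n]\,j_{A_n}$ (the paper's identity $[I_n-A_n\mid -C_n][I_{n+1}-R_{n+1}]=[I_n-\widetilde{A}_{C_n}\mid 0]$), which gives all six exactness statements and identifies the connecting map with $\hat{\iota}_{A_n}$. One small misattribution: the initial $0$, i.e.\ injectivity of $j_{A_n}$ on $\Ker(I_n-\widetilde{A}_{C_n})$, comes from injectivity of $j_{A_n}$ on all of $\Z^n$ in the top row, not from the identity map in the second row.
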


\begin{lemma}
$\Extw^0(\wtT_{A_n}) \cong \Ker([I_n -A_n | -C_n] :\Z^{n+1}\rightarrow \Z).$
\end{lemma}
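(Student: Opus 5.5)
The plan is to obtain $\Extw^0(\wtT_{A_n})$ from the UCT together with the $\K$-groups computed in Lemma \ref{lem:EL1}. (In the statement, the target of $[I_n - A_n \mid -C_n]$ should read $\Z^n$ rather than $\Z$.)

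The algebra $\wtT_{A_n}$ is a graph algebra of a finite graph with a sink (Lemma \ref{lem:EL1} (iii)), so it is nuclear and satisfies the UCT. Hence
\[
\Extw^0(\wtT_{A_n}) = \sqK^0(\wtT_{A_n},\mathbb{C}) \cong \Hom(\K_0(\wtT_{A_n}),\Z) \oplus \Ext^1_\Z(\K_1(\wtT_{A_n}),\Z).
\]
By Lemma \ref{lem:EL1}, $\K_1(\wtT_{A_n}) \cong \Ker(M)$, where we write
\[
M := \begin{bmatrix} I_n - A_n^t \\ -C_n^t \end{bmatrix} : \Z^n \to \Z^{n+1}.
\]
This is a subgroup of $\Z^n$, hence free, and therefore the $\Ext^1_\Z$ term vanishes. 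We are left with $\Extw^0(\wtT_{A_n}) \cong \Hom(\Coker M, \Z)$.

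Next, I compute $\Hom(\Coker M,\Z)$ by dualizing the exact sequence $\Z^n \xrightarrow{M} \Z^{n+1} \to \Coker M \to 0$. Left exactness of $\Hom(-,\Z)$ identifies $\Hom(\Coker M,\Z)$ with the set of functionals $\xi \in \Hom(\Z^{n+1},\Z)=\Z^{n+1}$ satisfying $\xi\circ M = 0$. In coordinates this is $\{\xi \in \Z^{n+1} \mid M^t \xi = 0\}$. Finally,
\[
M^t = [I_n - A_n \mid -C_n] : \Z^{n+1} \to \Z^n,
\]
which gives $\Extw^0(\wtT_{A_n}) \cong \Ker([I_n - A_n \mid -C_n])$, as claimed.

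The step needing care is the UCT input: nuclearity and the UCT for $\wtT_{A_n}$, and the vanishing of the $\Ext^1_\Z$ term. Both are standard here, since graph algebras of finite graphs are nuclear and lie in the bootstrap class, and kernels of integer matrices are free. Alternatively, one could cite Drinen--Tomforde \cite[Theorem 3.1]{DrinenTom} directly, as was done for $\Extw^1$ in Lemma \ref{lem:wExtEL1}. A further point to check is compatibility with the later use of this lemma: the map $\Extw^0 \to \Z$ in \eqref{eq:6termExtforA} should correspond to $s_{A_n}$. Under the dual identification above, that map is evaluation of $\xi$ at the class of the unit $[1_{n+1}]$. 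Since $\rho_n^0([1_{\wtT_{A_n}}]) = [1_{n+1}]$ by Lemma \ref{lem:EL1}, evaluation at $1_{n+1}$ gives $\xi \mapsto \sum_i \xi_i$, which is exactly $s_{A_n}$.
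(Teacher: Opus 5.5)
Your proposal is correct and matches the paper's proof: you apply the UCT, kill the $\Ext^1_\Z$ term because $\K_1(\wtT_{A_n})$ is a subgroup of $\Z^n$, and identify $\Hom(\Coker M,\Z)$ with $\Ker(M^t)=\Ker([I_n-A_n\mid -C_n])$. You are also right that the codomain in the statement should be $\Z^n$, and your check that the map to $\Z$ becomes $s_{A_n}$ is a useful addition that the paper leaves implicit.
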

\begin{proof}
As $\Extw^0(\wtT_{A_n}) = \sqK(\wtT_{A_n} ,\mathbb{C})$,
the UCT tells us a short exact sequence:
\begin{equation}
0
\rightarrow
\Ext_\Z^1(\K_1(\wtT_{A_n}), \Z)
\rightarrow 
\Extw^0(\wtT_{A_n})
\rightarrow 
\Hom(\K_0(\wtT_{A_n}), \Z)
\rightarrow 0.
\end{equation}
By Lemma \ref{lem:EL1}, the group $\K_1(\wtT_{A_n})$ is torsion free
and hence
we have
\begin{equation*}
\Extw^0(\wtT_{A_n}) 
\cong \Hom(\K_0(\wtT_{A_n}), \Z)
\cong \Hom(\Coker(
\begin{bmatrix}
I_n -A_n^t \\
-C_n
\end{bmatrix}: \Z^n \rightarrow \Z^{n+1}), \mathbb{Z}).
\end{equation*}
By elementary linear algebra,
the last group above is isomorphic to
%\begin{equation*}
%\Hom(
%\Coker(
%\begin{bmatrix}
%I_n -A_n^t \\
%-C_n
%\end{bmatrix}: \Z^n \rightarrow \Z^{n+1}), \Z)
%\cong
$
\Ker([I_n -A_n | -C_n] :\Z^{n+1}\rightarrow \Z).
$
\end{proof}
Hence we have the following.
\begin{proposition}\label{prop:6termforExtwtTAn}
The cyclic six term exact sequence \eqref{eq:6termExtforA}
for the $\Ext$-groups of the $C^*$-algebra $\wtT_{An}$ 
\begin{equation}\label{eq:6termforExtwtTAn}
\begin{CD}
\Exts^0(\wtT_{A_n}) @>{j_{A_n}}>> \Extw^0(\wtT_{A_n}) @>{s_{A_n}}>>  \Z \\
@AAA @. @V{\hat{\iota}_{A_n}}VV \\
0 @<<<  \Extw^1(\wtT_{A_n}) @<<< \Exts^1(\wtT_{A_n})
\end{CD}
\end{equation}
is computed to be the cyclic six term exact sequence
\eqref{eq:6termAnn+1}.
\end{proposition}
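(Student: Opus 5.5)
The plan is to identify the six groups in \eqref{eq:6termforExtwtTAn} one at a time with the corresponding groups in \eqref{eq:6termAnn+1}, and then check that the connecting maps agree under these identifications. Exactness of \eqref{eq:6termAnn+1} is the elementary linear algebra of Lemma \ref{lem:KerwAnn+1}, so only compatibility needs checking.

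\textbf{Right-hand column and bottom row.} Proposition \ref{prop:Exts3} gives the isomorphism $d_{A_n}:\Exts^1(\wtT_{A_n})\to\Cokern$. It also says that $d_{A_n}$ induces the isomorphism $\Extw^1(\wtT_{A_n})\cong\Z^n/[I_n-A_n|-C_n]\Z^{n+1}$. Since the map $\Exts^1\to\Extw^1$ sends a strong class to its weak class, the bottom arrow becomes the quotient map induced by the identity of $\Z^n$. Lemma \ref{lem:iotahat}(ii) shows that $d_{A_n}\circ\iota_{A_n}=\hat\iota_{A_n}$, so the right vertical arrow is $\hat\iota_{A_n}$.

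\textbf{Top right arrow.} By the preceding lemma, $\Extw^0(\wtT_{A_n})=\sqK(\wtT_{A_n},\bbC)\cong\Hom(\K_0(\wtT_{A_n}),\Z)$. The map $\Extw^0(\wtT_{A_n})\to\Z=\sqK(\bbC,\bbC)$ in \eqref{eq:6termExtforA} comes from the unital inclusion $\bbC\to\wtT_{A_n}$, which appears as the quotient $C_\A\to\bbC$ after the Puppe identification. Hence this map is $f\mapsto f([1_{\wtT_{A_n}}]_0)$.

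Under $\rho_n^0$ from Lemma \ref{lem:EL1}, a homomorphism $f$ on $\Coker\bigl(\begin{bmatrix} I_n-A_n^t\\ -C_n^t\end{bmatrix}\bigr)$ corresponds to the vector $l=[f(e_i)]_{i=1}^{n+1}$. This vector satisfies $[I_n-A_n|-C_n]\,l=0$, which is exactly the duality used in the preceding lemma. Since $[1_{\wtT_{A_n}}]$ corresponds to $[1_{n+1}]$, we get $f([1])=\sum_i l_i=s_{A_n}(l)$. So the top right arrow is $s_{A_n}$.

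\textbf{Top left arrow.} Because $\Extw^1\to 0$ precedes $\Exts^0$ in the cycle, the map $\Exts^0(\wtT_{A_n})\to\Extw^0(\wtT_{A_n})$ is injective. Its image is $\Ker s_{A_n}$, which consists of the $l\in\Ker[I_n-A_n|-C_n]$ with $l_{n+1}=-\sum_{i\le n}l_i$. For $l'\in\Z^n$, a direct computation gives
\[
[I_n-A_n|-C_n]\,j_{A_n}(l')=(I_n-A_n)l'+C_n\sum_{i=1}^n l'_i=(I_n-\widetilde A_{C_n})l'.
\]
Therefore $j_{A_n}$ is a bijection from $\Ker(I_n-\widetilde A_{C_n})$ onto $\Ker s_{A_n}$. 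This identifies $\Exts^0(\wtT_{A_n})$ with $\Ker(I_n-\widetilde A_{C_n})$ in such a way that the top left arrow becomes $j_{A_n}$.

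\textbf{Main obstacle.} The delicate step is the top right arrow: confirming that the $\sqK$-theoretic boundary map $\Extw^0\to\Z$ really is evaluation at the unit class, with the correct sign. I would argue this from the naturality of the six-term sequence for $0\to C_0(0,1)\otimes\A\to C_\A\to\A\to0$ applied to $\A=\bbC$. If the convention produced a sign, it would be absorbed by replacing $s_{A_n}$ with $-s_{A_n}$, and neither the kernels nor exactness would change. Consistency with $\hat\iota_{A_n}$ is then automatic, because both sequences are exact at $\Z$ with the same kernel of $\hat\iota_{A_n}$ (Lemma \ref{lem:KerwAnn+1}).
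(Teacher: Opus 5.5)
Your proposal is correct. Its skeleton matches the paper's: $\Exts^1$, $\Extw^1$ and $\Extw^0$ of $\wtT_{A_n}$ come from Proposition \ref{prop:Exts3}, Lemma \ref{lem:iotahat} and the UCT computation of $\Extw^0(\wtT_{A_n})$, and $\Exts^0(\wtT_{A_n})$ is then read off from exactness. The difference is the top right arrow.

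The paper's proof is a single remark: the three known groups ``determine'' $\Exts^0(\wtT_{A_n})$, so the two sequences are isomorphic. Taken literally, this requires knowing the connecting map $\Extw^0(\wtT_{A_n})\to\Z$, which the paper never identifies. It can be justified abstractly. That map and $s_{A_n}$ both go from free abelian groups of the same finite rank onto the same subgroup $\Ker\hat\iota_{A_n}\subset\Z$, so some isomorphism intertwines them. This gives only a non-canonical isomorphism of sequences.

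You instead compute the map as $f\mapsto f([1_{\wtT_{A_n}}]_0)$, using the Puppe sequence of the unit map and naturality of the UCT. Then $\rho_n^0([1])=[1_{n+1}]$ shows that, under the canonical duality $\Hom(\K_0(\wtT_{A_n}),\Z)\cong\Ker[I_n-A_n\,|\,-C_n]$, it is exactly $s_{A_n}$. Combined with your identity $[I_n-A_n\,|\,-C_n]\,j_{A_n}(l')=(I_n-\widetilde{A}_{C_n})l'$, every identification in the diagram becomes explicit and compatible with the maps. This supplies the step the paper leaves implicit.

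One minor point concerns the sign. If the boundary map were $-f([1])$, simply replacing $s_{A_n}$ by $-s_{A_n}$ would not give literally \eqref{eq:6termAnn+1}. The clean fix is to compose your identifications of $\Extw^0$ and $\Exts^0$ with $-1$ and leave $s_{A_n}$ and $j_{A_n}$ unchanged.
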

\begin{proof}
Since the three groups $\Extw^0(\wtT_{A_n}), \Extw^1(\wtT_{A_n}), \Exts^1(\wtT_{A_n})$ 
determine the other group $\Exts^0(\wtT_{A_n})$ through the 
cyclic six term exact sequence \eqref{eq:6termforExtwtTAn},
the two cyclic six term exact sequences  \eqref{eq:6termforExtwtTAn} and \eqref{eq:6termAnn+1} are 
isomorphic. 
\end{proof}
%%%%%%%%%%%%%%%%%%%%
%\begin{corollary}\label{cor:Exts0}
%\Exts^0(\wtT_{A_n}) \cong \Ker(I_n - \widetilde{A}_{C_n}).
%\end{corollary}
We consequently have the following theorem which will be used to prove 
Theorem \ref{thm:main2}.
\begin{theorem}\label{thm:6termforExtOA}
Assume that an infinite matrix $A$ is (DRS) and (LI).
Then the cyclic six term exact sequence \eqref{eq:6termExtforA} 
for the $\Ext$-groups of the Exel--Laca algebra $\OELA$ 
\begin{equation}\label{eq:6termforExtOA}
\begin{CD}
\Exts^0(\OELA) @>>> \Extw^0(\OELA) @>>>  \Z \\
@AAA @. @V{\iota_{\OELA}}VV \\
0 @<<<  \Extw^1(\OELA) @<<< \Exts^1(\OELA)
\end{CD}
\end{equation}
is computed to be the cyclic six term exact sequence
\eqref{eq:6termforExtwtTAn} and hence \eqref{eq:6termAnn+1} for $n \ge N$.
\end{theorem}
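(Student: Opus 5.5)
The plan is to compare the two cyclic six term exact sequences \eqref{eq:6termforExtOA} and \eqref{eq:6termforExtwtTAn} through the restriction map along the unital inclusion $\wtT_{A_n}\hookrightarrow \OELA$, for $n\ge K$. This inclusion is a unital $*$-homomorphism, so it induces a map of short exact sequences of mapping cones
$0\to C_0(0,1)\otimes \wtT_{A_n}\to C_{\wtT_{A_n}}\to \wtT_{A_n}\to 0$ into the corresponding sequence for $\OELA$. By naturality of $\sqK(\,\cdot\,,\mathbb{C})$ and its boundary maps, we obtain a morphism of cyclic six term exact sequences from \eqref{eq:6termforExtOA} to \eqref{eq:6termforExtwtTAn}. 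On the $\Z$ terms this morphism is the identity, since both come from $\sqK(\mathbb{C},\mathbb{C})$ via the unital inclusion $\mathbb{C}\to\wtT_{A_n}\to\OELA$.

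Next I will identify the vertical maps. Proposition \ref{prop:mainExtsExtw1} (i) and its proof give that restriction induces isomorphisms $\Extw^i(\OELA)\cong \Extw^i(\wtT_{A_n})$ for $i=0,1$. This uses the Milnor sequence together with Lemma \ref{lem:CDweak} for $i=1$; for $i=0$ it uses the UCT together with the isomorphisms $\iota^0_{n,n+1}$ and $\iota^1_{n,n+1}$ of Lemmas \ref{lem:KEL3} and \ref{lem:KEL5}, which show that $\K_*(\wtT_{A_n})\to\K_*(\OELA)$ is an isomorphism. Proposition \ref{prop:mainExtsExtw1} (ii) gives the isomorphism on $\Exts^1$, and Lemma \ref{lem:iotaAn2} (iii) shows that $\iota_{\OELA}$ corresponds to $\iota_{A_n}$. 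With these identifications, four of the five maps around the $\Exts^0$ term are isomorphisms: the two $\Extw$ terms, $\Z$, and $\Exts^1$. The five lemma then makes $\Exts^0(\OELA)\to\Exts^0(\wtT_{A_n})$ an isomorphism as well.

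It follows that \eqref{eq:6termforExtOA} is isomorphic to \eqref{eq:6termforExtwtTAn}. By Proposition \ref{prop:6termforExtwtTAn}, this is in turn isomorphic to the explicit sequence \eqref{eq:6termAnn+1}, with $\Exts^0(\OELA)\cong\Ker(I_n-\widetilde{A}_{C_n})$.

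The main obstacle is the $\Extw^0$ comparison. Proposition \ref{prop:mainExtsExtw1} and its proof only state the $\Extw^0$ identification via $\varprojlim$ together with the $\Extw^0$ stability, so I must check that restriction really is the isomorphism. For this I would first use that, by Lemmas \ref{lem:KEL3} and \ref{lem:KEL5}, the inclusion is a $\K$-theory isomorphism, hence a $\sqK$-equivalence by the UCT, since both algebras satisfy the UCT. That immediately gives isomorphisms on all $\sqK(\,\cdot\,,\mathbb{C})$ groups, and on the $\Exts$ groups as well, because the mapping cones are then $\sqK$-equivalent by the five lemma. This argument would even bypass the separate five lemma step.
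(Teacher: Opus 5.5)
Your proposal is correct and takes essentially the paper's route. Both arguments restrict along $\wtT_{A_n}\hookrightarrow\OELA$, take the $\Extw^1$ and $\Exts^1$ isomorphisms from Proposition~\ref{prop:mainExtsExtw1}, obtain $\Extw^0$ from the UCT and the $\K$-theory isomorphisms of Lemmas~\ref{lem:KEL3} and \ref{lem:KEL5}, and get $\Exts^0$ from exactness of $0\to\Exts^0\to\Extw^0\to\Z$. One small slip: in your five-lemma step the four terms flanking $\Exts^0$ are $\Extw^1$, $0$, $\Extw^0$ and $\Z$, not $\Exts^1$, but this does not affect the conclusion.

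Your closing remark is also valid and cleaner. Since the inclusion induces a $\K$-isomorphism, the UCT makes it a $\sqK$-equivalence, and via the five lemma also on the mapping cones. This gives all the vertical isomorphisms at once.
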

\begin{proof}
By Proposition \ref{prop:KformulaEL} and Proposition \ref{prop:mainExtsExtw1}, 
we have 
$\K_*(\OELA) \cong \K_*(\wtT_{A_n})$ and 
$\Extw^1(\OELA) \cong \Extw^1(\wtT_{A_n}),
\Exts^1(\OELA) \cong \Exts^1(\wtT_{A_n}).$
These isomorphisms are all induced by the natural embedding
$\wtT_{A_n} \hookrightarrow \OELA$.
By the UCT, we have the commutative diagram
\begin{equation*}
\begin{CD}
0 @>>> \Ext_\Z^1(\K_1(\OELA),\Z) @>>> \Extw^0(\OELA) @>>> \Hom(\K_0(\OELA),\Z) @>>> 0 \\
@. @VV{\varphi_{\K_1}}V                  @VV{\varphi_w^0}V   @VV{\varphi_{\K_0}}V  @. \\
0 @>>> \Ext_\Z^1(\K_1(\wtT_{A_n}),\Z) @>>> \Extw^0(\wtT_{A_n}) @>>> \Hom(\K_0(\wtT_{A_n}),\Z) @>>> 0.
\end{CD}
\end{equation*}
Since the two vertical arrows $\varphi_{\K_1}$ and $\varphi_{\K_0}$ are isomorphisms, 
the midle arrow 
$\varphi_w^0: \Extw^0(\OELA) \rightarrow \Extw^0(\wtT_{A_n})$
is an isomorphism.
In the commutative diagrams of the two six term exact sequences
\begin{equation*}
\begin{CD}
0 @>>> \Exts^0(\OELA) @>>> \Extw^0(\OELA) @>>> \Z \\ %@>>> \Exts^1(\OELA) @>>> \Extw^1(\OELA) @>> 0 \\
@. @VV{\varphi_s^0}V    @VV{\varphi_w^0}V  @| \\ % @VV{\varphi_s^1}V  @VV{\varphi_w^1}V  @. \\
0 @>>> \Exts^0(\wtT_{A_n}) @>>> \Extw^0(\wtT_{A_n}) @>>> \Z% @>>> \Exts^1(\wtT_{A_n}) @>>> 
%\Extw^1(\wtT_{A_n}) @>> 0 
\end{CD}
\end{equation*}
we have that $\varphi_s^0: \Exts^0(\OELA) \rightarrow \Exts^0(\wtT_{A_n})$
is an isomorphism, because 
$\varphi_w^0: \Extw^0(\OELA) \rightarrow \Extw^0(\wtT_{A_n})$ is an isomorphism.
All the isomorphisms between $\Ext_*^i(\OELA)$ and $\Ext_*^i(\wtT_{A_n})$
are induced by the natural embeddings
$\wtT_{A_n} \hookrightarrow \OELA$. 
Therefore we conclude that the six term exact sequence \eqref{eq:6termforExtOA}
is isomorphic to the cyclic six term exact sequence \eqref{eq:6termforExtwtTAn}.
\end{proof}

%%%%%%%%%%%%%%%%%%%%%%%%%%%%%%%%%%%%%%%%%%
\section{Main result}
%%%%%%%%%%%%%%%%%%%%%%%%%%%%%%%%%%%
Assume that an infinite matrix $A = [A(i,j)]_{i,j\in \N}$
is (RS) and (LI).
Let $K \in \N$ be the positive integer in Definition \ref{def:semifinite} (ii).
If $A$ further satisfies (DC), 
%Keep the notation as in the previous sections. By 
Theorem \ref{thm:6termforExtOA} holds so that
the strong extension groups $\Exts^1(\OELA), \Exts^0(\OELA)$
are computed to be $\Coker(I_n - \widetilde{A}_{C_n}), 
\Ker(I_n - \widetilde{A}_{C_n})$ 
for
$n \ge K$,
respectively.
As the finite matrix $\widetilde{A}_{C_n}$ defined by 
\eqref{eq:ACn} does not necessarily have its entries in nonnegative integers,
we will replace it with
another matrix having entries in $\{0, 1\},$
keeping the cokernel and kernel.

Fix $n \ge K$ and define the $(n+2) \times (n+2)$ matrix
$A_{C_{n}}$ and
 the $(n+2) \times 1$ matrix
 $\begin{bmatrix}
-C_n\\
0\\
0
\end{bmatrix}
$ by 
\begin{equation}\label{eq:wtAnCn}
A_{C_{n}}
:=
\begin{bmatrix}
  &       &   & c_1  & 0 \\
  & A_n   &   &\vdots& \vdots \\
  &       &   & c_n  & 0 \\
1 & \dots & 1 & 1    & 1 \\
0 & \dots & 0 & 1    & 0 
\end{bmatrix},
\qquad
\begin{bmatrix}
-C_n\\
0\\
0
\end{bmatrix}
=
\begin{bmatrix}
-c_1 \\
\vdots \\
-c_n \\
0 \\
0
\end{bmatrix}
\end{equation}
\begin{lemma} \label{lem:CokerKerIwAnCn}
Assume that the matrix $A$ is (RS) and (LI).
\begin{enumerate}
\renewcommand{\theenumi}{(\roman{enumi})}
\renewcommand{\labelenumi}{\textup{\theenumi}}
\item
$
(\Coker(I_n - \widetilde{A}_{C_n}), [-C_n])
\cong
(\Coker( I_{n+2} - A_{C_{n}}), 
\begin{bmatrix}
-C_n\\
0\\
0
\end{bmatrix}).
$
\item
$
\Ker(I_n - \widetilde{A}_{C_n})
\cong
\Ker( I_{n+2} - A_{C_{n}}).
$
\end{enumerate}
\end{lemma}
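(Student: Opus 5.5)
We aim to show that $I_{n+2} - A_{C_n}$ and the block diagonal matrix $(I_n - \widetilde{A}_{C_n}) \oplus \begin{bmatrix} -1 & 0 \\ 0 & 1\end{bmatrix}$ are equivalent over $\Z$. Concretely, we want $U, V \in GL_{n+2}(\Z)$ with $U (I_{n+2} - A_{C_n}) V$ equal to that block matrix. Such a $U$ induces an isomorphism of cokernels via $[x] \mapsto [Ux]$. The matrix $V$ induces an isomorphism of kernels via $y \mapsto V^{-1} y$. The extra diagonal block has trivial kernel and cokernel, so both (i) and (ii) follow. For (i) we must also check that $U$ fixes the vector $\begin{bmatrix} -C_n\\ 0\\ 0\end{bmatrix}$, possibly up to an element of the image; then under $\Coker(I_{n+2}-A_{C_n}) \cong \Coker(I_n - \widetilde{A}_{C_n})\oplus 0$ it goes to $[-C_n]$.

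From \eqref{eq:wtAnCn} we have
\begin{equation*}
I_{n+2} - A_{C_n} =
\begin{bmatrix}
I_n - A_n & -C_n & 0 \\
-1 \cdots -1 & 0 & -1 \\
0 \cdots 0 & -1 & 1
\end{bmatrix}.
\end{equation*}
We perform the following elementary integral operations in order.
\begin{enumerate}
\item Add column $n+2$ to column $n+1$. Column $n+1$ becomes $(-C_n, -1, 0)^t$, and the last row becomes $(0,\dots,0,0,1)$.
\item Add row $n+2$ to row $n+1$. This clears the $-1$ in position $(n+1,n+2)$, and row $n+1$ becomes $(-1,\dots,-1,-1,0)$.
\item For each $j=1,\dots,n$, subtract column $n+1$ from column $j$. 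The top-left block becomes $\bigl[\delta_{ij} - A(i,j) + c_i\bigr] = I_n - \widetilde{A}_{C_n}$ by \eqref{eq:ACn}, and the entries of row $n+1$ in columns $1,\dots,n$ become $0$.
\item For each $i=1,\dots,n$, subtract $c_i$ times row $n+1$ from row $i$. This kills the entries $-c_i$ in column $n+1$.
\end{enumerate}
The result is $\operatorname{diag}(I_n - \widetilde{A}_{C_n}, -1, 1)$.

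It remains to track the distinguished vector through the row operations, which make up $U$. Step 2 adds the $(n+2)$th coordinate to the $(n+1)$th. Step 4 subtracts $c_i$ times the $(n+1)$th coordinate from the $i$th. The vector $(-C_n,0,0)^t$ has zero $(n+1)$th and $(n+2)$th coordinates, so both operations leave it unchanged. Hence $U$ fixes it, and its image in $\Coker(I_n-\widetilde{A}_{C_n})$ is $[-C_n]$.

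The main point requiring care is this bookkeeping. Column operations do not affect cokernel classes, and row operations do not affect kernels. One must therefore check that the row operations used act trivially on the specific vector. The ordering chosen above, where all row operations only add multiples of the last two coordinates, is designed so that this is immediate.
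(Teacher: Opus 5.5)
Your proposal is correct and is essentially the paper's own argument. The paper runs the same chain of elementary integral row and column operations, padding $I_n-\widetilde{A}_{C_n}$ by diagonal entries $-1$ and $1$ and transforming it into $I_{n+2}-A_{C_n}$. It does so in the opposite direction, and it leaves implicit the bookkeeping you make explicit: that the row operations fix $[-C_n,0,0]^t$, and that the kernels correspond via $V^{-1}$.
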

\begin{proof}
(i)
We have the following sequence of elementary operations on matrices:
{\allowdisplaybreaks
\begin{align*}
 & \left(\Cokern, [-C_n] \right) \\
%=
% & \left(\Coker
%\begin{bmatrix}
%1 - A(1,1) + c_1 & -A(1,2) +c_1 & \dots & -A(1,n) + c_1  \\
%  - A(2,1) + c_2 &1-A(2,2) +c_2 & \dots & -A(2,n) + c_2  \\
%\vdots           & \vdots       &\ddots & \vdots         \\
%  - A(n,1) + c_n & -A(n,2) +c_n & \dots &1-A(n,n) + c_n    
%\end{bmatrix},
%\begin{bmatrix}
%-c_1 \\
%-c_2 \\
%\vdots \\
%-c_n 
%\end{bmatrix}
%\right)
%\\
=
 & \left(\Coker
\begin{bmatrix}
1 - A(1,1) + c_1 & -A(1,2) +c_1 & \dots & -A(1,n) + c_1 & 0  \\
  - A(2,1) + c_2 &1-A(2,2) +c_2 & \dots & -A(2,n) + c_2 & 0 \\
\vdots           & \vdots       &\ddots & \vdots        & \vdots  \\
  - A(n,1) + c_n & -A(n,2) +c_n & \dots &1-A(n,n) + c_n & 0 \\   
            0    &   0          & \dots & 0             & -1
\end{bmatrix},
\begin{bmatrix}
-c_1 \\
-c_2 \\
\vdots \\
-c_n \\
0
\end{bmatrix}
\right)
 \\
=
 & \left(\Coker
\begin{bmatrix}
1 - A(1,1) + c_1 & -A(1,2) +c_1 & \dots & -A(1,n) + c_1 & -c_1  \\
  - A(2,1) + c_2 &1-A(2,2) +c_2 & \dots & -A(2,n) + c_2 & -c_2 \\
\vdots           & \vdots       & \ddots& \vdots        & \vdots  \\
  - A(n,1) + c_n & -A(n,2) +c_n & \dots &1-A(n,n) + c_n & -c_n \\   
            0    &   0          & \dots & 0             & -1
\end{bmatrix},
\begin{bmatrix}
-c_1, \\
-c_2 \\
\vdots \\
-c_n \\
0
\end{bmatrix}
\right)
 \\
%=
% &\left(\Coker
%\begin{bmatrix}
%1 - A(1,1)       & -A(1,2)      & \dots & -A(1,n)       & -c_1  \\
%  - A(2,1)       &1-A(2,2)      & \dots & -A(2,n)       & -c_2 \\
%\vdots           & \vdots       &\ddots & \vdots        & \vdots  \\
%  - A(n,1)       & -A(n,2)      & \dots &1-A(n,n)       & -c_n \\   
%        -1       &   -1         & \dots &   -1          & -1
%\end{bmatrix},
%\begin{bmatrix}
%-c_1 \\
%-c_2 \\
%\vdots \\
%-c_n \\
%0
%\end{bmatrix}
%\right)
% \\
=
& \left(\Coker  
\begin{bmatrix}
&         &    & -c_1 \\
&I_n -A_n &    & \vdots \\
&         &    & -c_n \\
-1 & \dots& -1 & -1 
\end{bmatrix},
\begin{bmatrix}
-c_1 \\
\vdots \\
-c_n \\
0
\end{bmatrix}
\right)
\\
=
& \left(\Coker  
\begin{bmatrix}
&         &    & -c_1   & 0 \\
&I_n -A_n &    & \vdots & \vdots \\
&         &    & -c_n   & 0 \\
-1 & \dots& -1 & -1     & 0 \\
 0 & \dots& 0  &  0     & 1   
\end{bmatrix},
\begin{bmatrix}
-c_1 \\
\vdots \\
-c_n \\
0 \\
0
\end{bmatrix}
\right)
\\
%=
%& \left(\Coker  
%\begin{bmatrix}
%&         &    & -c_1   & 0 \\
%&I_n -A_n &    & \vdots & \vdots \\
%&         &    & -c_n   & 0 \\
%-1 & \dots& -1 & -1     & -1\\
% 0 & \dots& 0  &  0     & 1   
%\end{bmatrix},
%\begin{bmatrix}
%-c_1 \\
%\vdots \\
%-c_n \\
%0 \\
%0
%\end{bmatrix}
%\right)
%\\
=
& \left(\Coker  
\begin{bmatrix}
&         &    & -c_1   & 0 \\
&I_n -A_n &    & \vdots & \vdots \\
&         &    & -c_n   & 0 \\
-1 & \dots& -1 & 0      & -1\\
 0 & \dots& 0  & -1     & 1   
\end{bmatrix},
\begin{bmatrix}
-c_1 \\
\vdots \\
-c_n \\
0 \\
0
\end{bmatrix}
\right)
\\
=
& \left(\Coker(  
I_{n+2} 
-
A_{C_n}),
%\begin{bmatrix}
%&         &    &  c_1   & 0 \\
%& A_n     &    & \vdots & \vdots \\
%&         &    &  c_n   & 0 \\
% 1 & \dots&  1 &  1     &  1\\
% 0 & \dots& 0  &  1     & 0   
%\end{bmatrix},
\begin{bmatrix}
-C_n \\
0 \\
0
\end{bmatrix}
\right)
\end{align*}
}
(ii)
The above sequence of matrices  
shows $
\Ker(I_n - \widetilde{A}_{C_n})
\cong
\Ker( I_{n+2} - A_{C_{n}}).
$
\end{proof}
We will next replace the matrix $A_{C_n}$ with
the following $(n+2)\times (n+2)$-matrix $\widehat{A}_{C_n}$ 
defined by
\begin{equation}\label{eq:matrixhatACn}
\widehat{A}_{C_n}
:= 
\begin{bmatrix}
    &       &    & 1 & 0 \\
    & A_n^t &    & \vdots & \vdots \\
    &       &    & 1 & 0 \\
   1& \dots & 1  & 1 & 1 \\
c_1 & \dots & c_n& 1 & 1 
\end{bmatrix}
\end{equation}
where 
$A_n =[A(i,j)]_{i,j=1}^n,$
so that we will know that 
$$
\left(
\Coker(I_{n+2}-A_{C_n}), 
\begin{bmatrix}
-C_n \\
0 \\
0
\end{bmatrix}
\right)
=
\left(
\Coker(I_{n+2}-(\widehat{A}_{C_n})^t), 
\begin{bmatrix}
1 \\
\vdots \\
1
\end{bmatrix}
\right).
$$
The matrix 
$\widehat{A}_{C_n}$ is irreducible, non-permutation
because $A$ satisfies (LI),
so that Cuntz--Krieger algebra 
$\OCK_{\widetilde{A}_{C_n}}$
for the finite matrix
$\widetilde{A}_{C_n}$
is simple and pure infinite.
\begin{proposition}\label{prop:main1}
Let $A = [A(i,j)]_{i,j\in \N}$
be an irreducible infinite matrix with entries in $\{0,1\}$.
 Assume that $A$ is (RS) and (LI).
 Then we have for $n \ge K$
\begin{align*}
(\Exts^1(\wtT_{A_n}), [\iota_{A_n}(1)]_s, \Exts^0(\wtT_{A_n}))
\cong
(\K_0(\OCK_{\widehat{A}_{C_n}}), [1_{\OCK_{\widehat{A}_{C_n}}}]_0, \K_1(\OCK_{\widehat{A}_{C_n}})). 
\end{align*}
\end{proposition}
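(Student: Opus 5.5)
The plan is to assemble the triple on the left from results already proved, and then match it with the Cuntz--Krieger $\K$-theory by elementary integer row and column operations.

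\textbf{Step 1 (left-hand side).} Lemma \ref{lem:iotaAn2} (ii) gives $(\Exts^1(\wtT_{A_n}), \iota_{A_n}(1)) \cong (\Coker(I_n-\widetilde{A}_{C_n}), [-C_n])$. Proposition \ref{prop:6termforExtwtTAn} identifies the six term sequence \eqref{eq:6termforExtwtTAn} with \eqref{eq:6termAnn+1}. Comparing the first terms of these two sequences, as in Lemma \ref{lem:KerwAnn+1}, gives $\Exts^0(\wtT_{A_n}) \cong \Ker(I_n-\widetilde{A}_{C_n})$. Lemma \ref{lem:CokerKerIwAnCn} then transports the triple to
$$\Bigl(\Coker(I_{n+2}-A_{C_n}),\ \bigl[\begin{smallmatrix} -C_n\\ 0\\ 0\end{smallmatrix}\bigr],\ \Ker(I_{n+2}-A_{C_n})\Bigr).$$

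\textbf{Step 2 (right-hand side).} The matrix $\widehat{A}_{C_n}$ is irreducible and non-permutation by (LI). By the Cuntz--Krieger $\K$-theory formula,
$$(\K_0(\OCK_{\widehat{A}_{C_n}}), [1]_0, \K_1(\OCK_{\widehat{A}_{C_n}})) \cong (\Coker(I_{n+2}-\widehat{A}_{C_n}^t), [1_{n+2}], \Ker(I_{n+2}-\widehat{A}_{C_n}^t)).$$
It therefore suffices to find unimodular integer matrices $P,Q$ with
$$P(I_{n+2}-A_{C_n})Q = I_{n+2}-\widehat{A}_{C_n}^t$$
and $P[-C_n;0;0] \equiv 1_{n+2}$ modulo the image. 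The kernels are then identified by $Q^{-1}$.

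\textbf{Step 3 (the matrix computation).} Write out $\widehat{A}_{C_n}^t$. Its first $n$ rows are $[A_n \mid 1 \mid C_n]$, row $n+1$ is $[1,\dots,1,1,1]$, and row $n+2$ is $[0,\dots,0,1,1]$. Compare this with $A_{C_n}$, whose first $n$ rows are $[A_n\mid C_n\mid 0]$, row $n+1$ is $[1,\dots,1,1,1]$, and row $n+2$ is $[0,\dots,0,1,0]$.

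The unit classes simplify first. In $\Coker(I-\widehat{A}_{C_n}^t)$, column $n+1$ is $(-1,\dots,-1,0,-1)^t$, so $[1_{n+2}]=[e_{n+1}]$. In $\Coker(I-A_{C_n})$, column $n+1$ is $(-c_1,\dots,-c_n,0,-1)^t$, so $[(-C_n,0,0)^t]=[e_{n+2}]$.

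Hence I would first conjugate $A_{C_n}$ by the transposition of the indices $n+1$ and $n+2$; this is a simultaneous row and column permutation, so it does not change cokernel or kernel. It moves the distinguished class to $e_{n+1}$, and the $C_n$ column into position $n+2$, matching $\widehat{A}_{C_n}^t$. What then remains is to produce the $1$'s in column $n+1$ of the first $n$ rows. Row $n+2$ of $I-A_{C_n}$ after the swap is $(0,\dots,0,-1,0)$, an $e_{n+1}$-direction. So I would add that row to each of the first $n$ rows, and then fix the last two rows by adding or subtracting rows $n+1$ and $n+2$ and a column operation on the last two columns.

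Throughout, I must check that every row operation $P$ fixes the class of $e_{n+1}$ in the cokernel. Adding row $n+2$ to other rows changes $e_{n+1}$ only by multiples that lie in the image, and I expect this to be the case. Because the row operations act on the ambient $\Z^{n+2}$ in which the class of $e_{n+1}$ lives, this tracking is the delicate part.

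\textbf{Main obstacle.} Finding the exact short chain of elementary operations in Step 3 that converts $I-A_{C_n}$ into $I-\widehat{A}_{C_n}^t$ while transporting $[(-C_n,0,0)^t]$ onto $[1_{n+2}]$. I would carry it out in the same displayed style as the proof of Lemma \ref{lem:CokerKerIwAnCn}, recording each step together with the image of the distinguished vector. The kernel statement then follows because the same $P,Q$ identify $\Ker(I-A_{C_n})$ with $\Ker(I-\widehat{A}_{C_n}^t)$.
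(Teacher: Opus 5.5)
Your outline matches the paper's. Steps 1 and 2 use Lemma \ref{lem:iotaAn2}, Proposition \ref{prop:6termforExtwtTAn} and Lemma \ref{lem:CokerKerIwAnCn} to reduce the claim to transforming $(\Coker(I_{n+2}-A_{C_n}),[(-C_n,0,0)^t],\Ker(I_{n+2}-A_{C_n}))$ into $(\Coker(I_{n+2}-\widehat{A}_{C_n}^t),[1_{n+2}],\Ker(I_{n+2}-\widehat{A}_{C_n}^t))$ by elementary operations. That chain of operations is the entire content of the paper's proof. You leave it undone, and the one step you do sketch fails.

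After the simultaneous swap of the indices $n+1$ and $n+2$, the matrix $I_{n+2}-A_{C_n}$ has:
\begin{itemize}
\item first $n$ rows $[\,I_n-A_n\mid 0\mid -C_n\,]$;
\item row $n+1$ equal to $(0,\dots,0,1,-1)$;
\item row $n+2$ equal to $(-1,\dots,-1,-1,0)$, not $(0,\dots,0,-1,0)$ as you claim.
\end{itemize}
Adding that row to the first $n$ rows, as you propose, turns the block $I_n-A_n$ into $I_n-A_n-J$, where $J$ is the all-ones matrix. Adjusting only the last two rows and columns afterwards does not recover it.

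The repair is to reverse the order:
\begin{enumerate}
\item Add row $n+2$ to row $n+1$. This makes row $n+1$ equal to $(-1,\dots,-1,0,-1)$ and column $n+1$ equal to $-e_{n+2}$.
\item Subtract column $n+1$ from columns $1,\dots,n$. This changes only row $n+2$, which becomes $(0,\dots,0,-1,0)$.
\item Only now add row $n+2$ to rows $1,\dots,n$. They become $[\,I_n-A_n\mid -1\mid -C_n\,]$.
\end{enumerate}
The result is exactly $I_{n+2}-\widehat{A}_{C_n}^t$. The row operations in steps 1 and 3 fix $e_{n+1}$ exactly, not merely modulo the image. As you noted, $[e_{n+1}]=[1_{n+2}]$ in the target cokernel, so the unit class is matched. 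The column operations (the swap and step 2) identify the kernels.

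The paper does the same manipulation in a different order:
\begin{enumerate}
\item It uses $[(-C_n,0,0)^t]=[e_{n+1}]$ and adds row $n+1$ to row $n+2$.
\item It subtracts column $n+2$ from columns $1,\dots,n$.
\item It adds row $n+1$ to rows $1,\dots,n$; together with the first row operation this carries $e_{n+1}$ directly to $1_{n+2}$.
\item It performs the swap of the indices $n+1,n+2$ last.
\end{enumerate}
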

\begin{proof}
Since
\begin{align*}
\begin{bmatrix}
&         &    & -c_1   & 0 \\
&I_n -A_n &    & \vdots & \vdots \\
&         &    & -c_n   & 0 \\
-1 & \dots& -1 & 0      & -1\\
 0 & \dots& 0  & -1     & 1   
\end{bmatrix}
\begin{bmatrix}
0\\
\vdots \\
0 \\
-1 \\
-1 
\end{bmatrix}
=
\begin{bmatrix}
c_1\\
\vdots \\
c_n \\
1\\
0 
\end{bmatrix},
\end{align*}
two vectors
$
\begin{bmatrix}
0\\
\vdots \\
0 \\
1 \\
0 
\end{bmatrix}
$
and
$
\begin{bmatrix}
-c_1\\
\vdots \\
-c_n \\
0\\
0 
\end{bmatrix}
$
define the same element in 
the group
$\Coker(I_{n+2} - A_{C_n})$.
By the elementary operations on matrices, we have
{\allowdisplaybreaks
\begin{align*}
& \left(\Coker(  
I_{n+2} 
-
A_{C_n}),
\begin{bmatrix}
-C_n \\
0 \\
0
\end{bmatrix}
\right) \\
=
& \left(
\Coker
{\begin{bmatrix}
&         &    & -c_1   & 0 \\
&I_n -A_n &    & \vdots & \vdots \\
&         &    & -c_n   & 0 \\
-1 & \dots& -1 & 0      & -1\\
 0 & \dots& 0  & -1     & 1   
\end{bmatrix},
} \, 
\begin{bmatrix}
0\\
\vdots \\
0 \\
1 \\
0 
\end{bmatrix}
\right)
\\
= 
& \left(
\Coker
{\begin{bmatrix}
&         &    & -c_1   & 0 \\
&I_n -A_n &    & \vdots & \vdots \\
&         &    & -c_n   & 0 \\
-1 & \dots& -1 & 0      & -1\\
-1 & \dots& -1 & -1     & 0   
\end{bmatrix},
} \, 
\begin{bmatrix}
0\\
\vdots \\
0 \\
1 \\
1 
\end{bmatrix}
\right)
\\
= 
& \left(
\Coker
{\begin{bmatrix}
&         &    & -c_1   & 0 \\
&I_n -A_n &    & \vdots & \vdots \\
&         &    & -c_n   & 0 \\
0  & \dots& 0  & 0      & -1\\
-1 & \dots& -1 & -1     & 0   
\end{bmatrix},
} \, 
\begin{bmatrix}
0\\
\vdots \\
0 \\
1 \\
1 
\end{bmatrix}
\right)
\\
= 
& \left(
\Coker
{\begin{bmatrix}
&         &    & -c_1   & -1 \\
&I_n -A_n &    & \vdots & \vdots \\
&         &    & -c_n   & -1 \\
0  & \dots& 0  & 0      & -1 \\
-1 & \dots& -1 & -1     & 0   
\end{bmatrix},
} \, 
\begin{bmatrix}
1\\
\vdots \\
1 \\
1 \\
1 
\end{bmatrix}
\right)
\\
= 
&
\left(
\Coker(
I_{n+2} -
{\begin{bmatrix}
&         &    &  c_1   &  1 \\
&A_n &    & \vdots & \vdots \\
&         &    &  c_n   &  1 \\
0  & \dots& 0  & 1      &  1 \\
1 & \dots&  1 &  1     & 1   
\end{bmatrix}),
} \, 
\begin{bmatrix}
1\\
\vdots \\
1 \\
1 \\
1 
\end{bmatrix}
\right)
\\
= 
&\left(
\Coker(
I_{n+2} -
{\begin{bmatrix}
&         &    &    1   & c_1\\
&A_n &    & \vdots & \vdots \\
&         &    &   1    &c_n \\
1  & \dots& 1  & 1      &  1 \\
0 & \dots&  0 &  1     & 1   
\end{bmatrix}),
} \, 
\begin{bmatrix}
1\\
\vdots \\
1 \\
1 \\
1 
\end{bmatrix}
\right)
\\
= &
\left(
\Coker( I_{n+2} - (\widehat{A}_{C_n})^t), \, 
\begin{bmatrix} 
1\\
\vdots \\
1 \\
1 \\
1 
\end{bmatrix}
\right).
\end{align*}
}
The above matrix operations 
show us 
$\Ker(I_{n+2} - A_{C_n}) \cong \Ker( I_{n+2} - (\widehat{A}_{C_n})^t)
$ which is isomorphic to $\Exts^0(\wtT_{A_n})$
by Lemma \ref{lem:KerwAnn+1} ,Proposition \ref{prop:6termforExtwtTAn}
and Lemma \ref{lem:CokerKerIwAnCn}.
\end{proof}

Recall that the algebra $\widehat{\OELA}$ is the unital Kirchberg algebra satisfying
\begin{align*}
(K_0(\widehat{\OELA}), [1_{\widehat{\OELA}}]_0, K_1(\widehat{\OELA}))
\cong & (\Exts(\OELA), \iota_{\OELA}(1), \Exts^0(\OELA)) \\
= &(\Exts(\wtT_{A_n}), \iota_{A_n}(1), \Exts^0(\wtT_{A_n})).
\end{align*}
Therefore Proposition \ref{prop:main1} yields the following.
\begin{theorem}\label{thm:main2}
Let $A = [A(i,j)]_{i,j\in \N}$
be an irreducible infinite matrix with entries in $\{0,1\}$.
 Assume that $A$ is (DRS) and (LI).
Then there exists $K \in \N$ such that the reciprocal dual 
$\widehat{\OELA}$ of the Exel--Laca algebra 
$\OELA$ is isomorphic to the simple Cuntz--Krieger algebra 
$\OCK_{\widehat{A}_{C_K}}$ for the $(K+2)\times (K+2)$-finite matrix 
$\widehat{A}_{C_K}$ defined by \eqref{eq:matrixhatACK}.
\end{theorem}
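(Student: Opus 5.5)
The plan is to assemble the pieces already established and conclude with the Kirchberg--Phillips classification theorem. First, take $K$ as in Definition \ref{def:semifinite} (ii), enlarged if necessary so that both (RS) and (DC) hold from $K$ on and so that $c_i=1$ for some $1\le i\le K$. By Lemma \ref{lem:EL=>Kirchberg}, $\OELA$ is a Kirchberg algebra. By Proposition \ref{prop:KformulaEL} (ii), its $\K$-groups are finitely generated, so the reciprocal dual $\widehat{\OELA}$ exists. By definition (cf. \cite[Proposition 3.7]{MatSogabe2} and the recalled characterization just before the theorem), $\widehat{\OELA}$ is the unital Kirchberg algebra, unique up to isomorphism, with
\[
(\K_0(\widehat{\OELA}), [1_{\widehat{\OELA}}]_0, \K_1(\widehat{\OELA}))\cong(\Exts^1(\OELA), \iota_{\OELA}(1), \Exts^0(\OELA)).
\]

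Second, I will identify the right-hand triple with the one for $\wtT_{A_K}$. Proposition \ref{prop:mainExtsExtw1} (ii) together with Lemma \ref{lem:iotaAn2} (iii) shows that restriction gives $(\Exts^1(\OELA),\iota_{\OELA}(1))\cong(\Exts^1(\wtT_{A_K}),\iota_{A_K}(1))$. Theorem \ref{thm:6termforExtOA}, whose isomorphisms are induced by the embedding $\wtT_{A_K}\hookrightarrow\OELA$, gives $\Exts^0(\OELA)\cong\Exts^0(\wtT_{A_K})$ compatibly. Proposition \ref{prop:main1} with $n=K$ then identifies this triple with $(\K_0(\OCK_{\widehat{A}_{C_K}}),[1]_0,\K_1(\OCK_{\widehat{A}_{C_K}}))$. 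The matrix used there is exactly \eqref{eq:matrixhatACK}. This step also uses the standard facts $\K_0(\OCK_B)\cong\Coker(I-B^t)$ with the unit going to $[1,\dots,1]$, and $\K_1(\OCK_B)\cong\Ker(I-B^t)$.

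Third, I will check that $\widehat{A}_{C_K}$ is irreducible and not a permutation matrix, so that $\OCK_{\widehat{A}_{C_K}}$ is a simple Cuntz--Krieger algebra and hence a Kirchberg algebra in the UCT class.
\begin{itemize}
\item \emph{Non-permutation:} the row indexed $K+1$ consists entirely of $1$s.
\item \emph{Irreducibility:} $A_K^t$ is irreducible by (LI). Every vertex $i\le K$ has an edge to $K+1$, and $K+1$ has edges to every vertex, including $K+2$. Vertex $K+2$ has an edge back to $K+1$.
\end{itemize}
Since both algebras are unital Kirchberg algebras in the UCT class with isomorphic triples $(\K_0,[1]_0,\K_1)$, the Kirchberg--Phillips theorem yields $\widehat{\OELA}\cong\OCK_{\widehat{A}_{C_K}}$.

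The main obstacle I expect is bookkeeping rather than new mathematics. One must make sure that the index $n$ in Theorem \ref{thm:6termforExtOA} and Proposition \ref{prop:mainExtsExtw1} can be taken equal to the same $K$ used for (DRS). One must also check that the isomorphism for $\Exts^0$ and the one sending $\iota(1)$ to the unit class are simultaneously realized by the single chain of matrix operations in Lemma \ref{lem:CokerKerIwAnCn} and Proposition \ref{prop:main1}. I would handle the first point by choosing $K$ at the outset as the maximum of the thresholds required. I would handle the second by noting that these results already track the distinguished element together with the kernel.
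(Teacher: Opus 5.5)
Your proposal is correct and follows the paper's own argument. You identify $(\Exts^1(\OELA),\iota_{\OELA}(1),\Exts^0(\OELA))$ with the corresponding triple for $\wtT_{A_K}$ via Proposition \ref{prop:mainExtsExtw1}, Lemma \ref{lem:iotaAn2} and Theorem \ref{thm:6termforExtOA}, then apply Proposition \ref{prop:main1} and Kirchberg--Phillips classification; your explicit irreducibility and non-permutation check for $\widehat{A}_{C_K}$ fills in what the paper only asserts. Your second bookkeeping worry is unnecessary: classification needs only a pointed isomorphism on $\K_0$ and an independent isomorphism on $\K_1$, with no compatibility between them.
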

By excahnging the $(K+1)$-row (resp. column) with
 $(K+2)$-row (resp. column) simultaneously in the matrix \eqref{eq:matrixhatACK},
 we have the following corollary.
\begin{corollary}\label{cor:main2}
Keep the assumption of Theorem \ref{thm:main2}.
Let
$\widehat{A}_{[K,K+1]}$
be the $(K+2)\times (K+2)$ matrix defined by
\begin{align*}
\widehat{A}_{[K,K+1]}
:= 
{\left[
\begin{array}{ccccc}
A(1,1)  & \dots & A(K,1) &  0   & 1 \\ 
A(1,2)  & \dots & A(K,2) &  0   &1 \\ 
\vdots  &       & \vdots &\vdots& \vdots \\ 
A(1,K)  & \dots & A(K,K) &  0   & 1 \\ 
A(1,K+1)& \dots &A(K,K+1)&  1   & 1 \\ 
   1    & \dots &  1     &  1   & 1 
\end{array}
\right] } 
= 
\begin{bmatrix}
   &       &    & 0      & 1 \\
   & A_K^t &    & \vdots &\vdots \\
   &       &    & 0      & 1 \\
c_1&\dots  &c_K & 1      & 1 \\
 1 &\dots  & 1  & 1      & 1 
 \end{bmatrix}.
\end{align*}
Then we have
$\widehat{\OELA} = \OCK_{\widehat{A}_{[K,K+1]}}$.
\end{corollary}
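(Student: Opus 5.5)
The corollary follows from Theorem \ref{thm:main2} once we check that simultaneously swapping the last two indices of $\widehat{A}_{C_K}$ produces exactly $\widehat{A}_{[K,K+1]}$. Conjugating by a permutation matrix gives an isomorphic Cuntz--Krieger algebra. The main theorem already identifies $\widehat{\OELA}$ with $\OCK_{\widehat{A}_{C_K}}$, so everything reduces to an elementary matrix identity plus the invariance of Cuntz--Krieger algebras under relabelling of the vertex set.

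\emph{Step 1: identify the permuted matrix.} Let $P$ be the $(K+2)\times(K+2)$ permutation matrix of the transposition $(K+1\;K+2)$. We compute $P\widehat{A}_{C_K}P$, i.e.\ exchange rows $K+1,K+2$ and then columns $K+1,K+2$ of \eqref{eq:matrixhatACK}.

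After the row swap the first $K$ rows are unchanged, the $(K+1)$th row is $[c_1,\dots,c_K,1,1]$ and the $(K+2)$th row is $[1,\dots,1,1,1]$.

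After the column swap the upper right $K\times 2$ block changes from columns $(1,0)$ to $(0,1)$. Row $K+1$ becomes $[c_1,\dots,c_K,1,1]$ and row $K+2$ stays $[1,\dots,1,1,1]$. This is the right-hand expression in the definition of $\widehat{A}_{[K,K+1]}$.

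We also check that the middle expression agrees with it. The upper left block is $A_K^t$, whose $(i,j)$ entry is $A(j,i)$. The $(K+1)$th row has entries $A(j,K+1)$, $j=1,\dots,K$, and these equal $c_j$ by (RS), since $A(j,K+1)=c_j$ for $j\le K$.

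\emph{Step 2: invariance under relabelling.} For a $\{0,1\}$ matrix $B$ and a permutation $\pi$, let $B^\pi(i,j)=B(\pi(i),\pi(j))$. The generators $T_i:=S_{\pi(i)}$ of $\OCK_B$ satisfy the Cuntz--Krieger relations for $B^\pi$. By the uniqueness theorem for Cuntz--Krieger algebras (condition (I) holds because the matrix is irreducible and non-permutation), this gives a unital isomorphism $\OCK_{B^\pi}\cong\OCK_B$.

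Alternatively one can argue at the level of invariants. Since $P$ is unimodular, the pair $(\Coker(I-B^t),[1])$ and the group $\Ker(I-B^t)$ are preserved, and the Kirchberg--Phillips classification then gives the isomorphism, in the same way the paper concludes Theorem \ref{thm:main2}.

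Irreducibility and the non-permutation property are preserved by conjugation with $P$, so $\OCK_{\widehat{A}_{[K,K+1]}}$ is again simple.

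\emph{Step 3: conclude.} Combining Steps 1 and 2 with Theorem \ref{thm:main2} gives
\[
\widehat{\OELA}\cong\OCK_{\widehat{A}_{C_K}}\cong\OCK_{\widehat{A}_{[K,K+1]}}.
\]
There is no real obstacle. The only point that needs care is the bookkeeping in Step 1: the block in column $K+1$ is the zero column and not the $c$'s, and the row with entries $A(j,K+1)$ really is the $c$ row, which depends on (RS).
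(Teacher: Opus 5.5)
Your proposal is correct and follows the paper's argument: the corollary is derived from Theorem \ref{thm:main2} by simultaneously exchanging the $(K+1)$th and $(K+2)$th rows and columns of $\widehat{A}_{C_K}$, and this relabelling of vertices does not change the Cuntz--Krieger algebra. Your check that the middle expression equals the block form, using $A(j,K+1)=c_j$ from (RS), supplies a detail the paper leaves implicit.
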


%%%%%%%%%%%%%%%%%%%%%%%%%%%%%%%%%%%%%%%%%%%%%%%%%%%%%%%%%%%%%%%%%%%%%%%%%%%%%%%%%
\section{Examples of the reciprocal of Exel--Laca algebras}\label{sect:Examples}
%%%%%%%%%%%%%%%%%%%%%%%%%%%%%%%%%%%%%%%%%%

{\bf 1.} The reciprocal dual of $\OI$:

$A=[A(i,j)]_{i,j\in \N}$  be the infinite matrix  $I_\infty$
defined by \eqref{eq:Iinftymatrix}.
%whose entries are all $1'$ s,
%such as 
%\begin{equation}\label{eq:Iinftymatrix}
%I_\infty =
%\begin{bmatrix}
%1 & 1 & 1 &\cdots &   & \\
%1 & 1 & 1 &\cdots &   & \\
%1 & 1 & 1 &\cdots &   & \\
%1 & 1 & 1 & 1 & 1 & 1 &\cdots &   & \\
%1 & 1 & 1 & 1 & 1 & 1 &       &   & \\
%\vdots &\vdots &\vdots&\ddots &   & \\            
%  &       &      &       &   &              
%\end{bmatrix}
%\end{equation}
All the vertices are row complementary finite, so that 
$c_i =1$ for all $ i \in \N$.
Hence for any $n \ge 1$, we have $A(i,n+1) =1 = c_i$ for $i=1,\dots,n+1.$
It is (DRS) and (LI) with $K=1$.
Its Exel--Laca algebra 
$\OEL_{I_\infty}$ is the Cuntz algebra $\OI$ satisfying
$\K_0(\OI) =\Z, \K_1(\OI) =0.$

Take $n$ =1, so that we have
$$
\widehat{A}_{C_1}
=
\begin{bmatrix}
A_1^t & 1 & 0 \\
1     & 1 & 1 \\
0     & 1 & 0
\end{bmatrix}
$$
By elementary operations on the matrices, we have
\begin{equation*}
I_3 - \widehat{A}_{C_1}^t 
=
\begin{bmatrix}
0 &-1 & 0 \\
-1& 0 & -1\\
0 & -1& 1
\end{bmatrix}
\longrightarrow
\begin{bmatrix}
0 &-1 & 0 \\
-1& 0 & 0\\
0 & -1& 1
\end{bmatrix}
\longrightarrow
\begin{bmatrix}
0 &-1 & 0 \\
-1& 0 & 0\\
0 & 0 & 1
\end{bmatrix}
\end{equation*}
Therefore we have
\begin{align*}
\Exts^1(\OI) = \Exts^1(\OEL_{I_\infty})
=\Coker 
(I_{3} - \widehat{A}_{C_1}^t)
= 0
\end{align*}
showing that 
$\K_0(\widehat{\O}_\infty) = \Exts^1(\OI) =0$
and similarly
$\K_1(\widehat{\O}_\infty) = \Exts^0(\OI) =0,$
and hence 
$\widehat{\O}_\infty = \O_2$.

%%%%%%%%%%%%%%%%%%%%%%%%%%%%%%%%%%%%%%%%
{\bf 2.} The reciprocal duals of Cuntz--Krieger algebras:

Let $A=[A(i,j)]_{i,j=1}^N$ be an irreducible non-permutation matrix with entries in $\{0,1\}.$
the reciprocal dual $\widehat{\OCKA}$ of the Cuntz--Krieger algebra $\OCKA$ is 
a unital Kirchberg algebra which is  the Exel--Laca algebra
$\OELwA$  
for the  infinite matrix $\widehat{A}$ defined by \eqref{eq:widehatA}.
We put $B =\widehat{A}$.
%%:
%\begin{equation}\label{eq:widehatA}
%\widehat{A}=
%\left[
%\begin{array}{ccccccc}
%&                &1      &0     &0     &\cdots \\
%&\text{\Huge A}^t&\vdots&\vdots&\vdots&\vdots \\
%&                &1      &0     &0     &\cdots \\
%1&\dots          &1      &1     &1     &\cdots \\
%1&\dots          &1      &0     &0     &\cdots \\
%1&\dots          &1      &0     &0     &\cdots \\
%\vdots &\dots    &\vdots &\vdots&\vdots&\vdots \\
%\end{array}
%\right]
%\end{equation}
The vertex $N+1\in V_B$ is row complementary finite, the other vertices
are all row finite. 
It is (DRS) and (LI) with $K =N+1$.
We then have
$B(i,n+1) =c_i$ for all $i=1,\dots, n+1$ for $n \ge K$.
The $K \times K$ matrix $B_K=[B(i,j)]_{i,j=1}^K$  is 
\begin{equation}\label{eq:BK}
B_K=
\left[
\begin{array}{ccccccc}
&                &1         \\
&\text{\Huge A}^t&\vdots  \\
&                &1            \\
1&\dots          &1              
\end{array}
\right] 
\quad
\text{ with }
\quad
\begin{bmatrix}
c_1\\
\vdots \\
c_{K-1} \\
c_{K}
\end{bmatrix}
=
\begin{bmatrix}
0\\
\vdots \\
0 \\
1 
\end{bmatrix}.
\end{equation}
The matrix $\widehat{B}_{C_K}$ defined by 
\eqref{eq:matrixhatACK} for $A =B$ is the $(K+2)\times (K+2)$ matrix such that 
\begin{equation}\label{eq:tildeBhatK}
\widehat{B}_{C_K}
=
\begin{bmatrix}
  &                &  &     & 1      & 0 \\
& & ({B_K})^t      &  &    \vdots & \vdots \\ 
  &                &  &     & 1      & 0 \\
  &                &  &     & 1      & 0 \\
1 & \dots          & 1& 1   & 1      & 1 \\
c_1 & \dots        &c_{K-1}&c_{K}    & 1      & 1 
\end{bmatrix}
=
\begin{bmatrix}
  &                &  & 1   & 1      & 0 \\
  & A              &  & \vdots       & \vdots & \vdots \\ 
  &                &  & 1   & 1      & 0 \\
1 & \dots          & 1& 1   & 1      & 0      \\
1 & \dots          & 1& 1   & 1      & 1 \\
0 & \dots          & 0& 1   & 1      & 1 
\end{bmatrix}.
\end{equation}
By the elementary operations on matrices, we have 
{\allowdisplaybreaks
\begin{align*}
& (
\K_0(\OCK_{\widehat{B}_{C_K}}), [1_{\OCK_{\widehat{B}_{C_K}}}]_0 ) \\
= &  \left(
(\Coker( I_{N+3} - ({\widehat{B}_{C_K}})^t), \,
\begin{bmatrix}
1\\
\vdots \\
1 
\end{bmatrix}
\right)
\\
= &
\left(
\Coker 
\begin{bmatrix}
  &                &  & -1            &-1      & 0 \\
  &I_N- A^t        &  & \vdots        & \vdots & \vdots \\ 
  &                &  & -1            &-1      & 0 \\
-1& \dots          &-1& 0             &-1      & -1 \\
-1& \dots          &-1&-1             & 0      &-1 \\
0 & \dots          & 0& 0             & -1     & 0 
\end{bmatrix}, \,
\begin{bmatrix}
1\\
\vdots \\
1 \\
1 \\
1 \\
1 
\end{bmatrix}
\right) 
\\
= &
\left(
\Coker 
\begin{bmatrix}
  &                &  & -1            & 0      & 0 \\
  &I_N- A^t        &  & \vdots        & \vdots & \vdots \\ 
  &                &  & -1            & 0      & 0 \\
0 & \dots          &0 & 0             &-1      & -1 \\
0 & \dots          &0 &-1             & 1      &-1 \\
0 & \dots          & 0& 0             & -1     & 0 
\end{bmatrix}, \,
\begin{bmatrix}
1\\
\vdots \\
1 \\
1 \\
1 \\
1 
\end{bmatrix}
\right) \\
= &
\left(
\Coker 
\begin{bmatrix}
  &                &  & -1            & 0      & 0 \\
  &I_N- A^t        &  & \vdots        & \vdots & \vdots \\ 
  &                &  & -1            & 0      & 0 \\
0 & \dots          &0 & 0             &-1      & -1 \\
0 & \dots          &0 &-1             & 2      & 0 \\
0 & \dots          & 0& 0             & -1     & 0 
\end{bmatrix}, \,
\begin{bmatrix}
1\\
\vdots \\
1 \\
1 \\
0 \\
1 
\end{bmatrix}
\right) \\
= &
\left(
\Coker 
\begin{bmatrix}
  &                &  & -1            & 0      & 0 \\
  &I_N- A^t        &  & \vdots        & \vdots & \vdots \\ 
  &                &  & -1            & 0      & 0 \\
0 & \dots          &0 & 0             &-1      & -1 \\
0 & \dots          &0 &-1             & 0      & 0 \\
0 & \dots          & 0& 0             & -1     & 0 
\end{bmatrix}, \,
\begin{bmatrix}
1\\
\vdots \\
1 \\
1 \\
2 \\
1 
\end{bmatrix}
\right) \\
= &
\left(
\Coker 
\begin{bmatrix}
  &                &  & -1            & 0      & 0 \\
  &I_N- A^t        &  & \vdots        & \vdots & \vdots \\ 
  &                &  & -1            & 0      & 0 \\
0 & \dots          &0 & 0             & 0      & -1 \\
0 & \dots          &0 &-1             & 0      & 0 \\
0 & \dots          & 0& 0             & -1     & 0 
\end{bmatrix}, \,
\begin{bmatrix}
-1\\
\vdots \\
-1 \\
1 \\
2 \\
1 
\end{bmatrix}
\right) \\
=&
\left(
\Coker
\begin{bmatrix}
  &                &   \\
  &I_N- A^t        &   \\ 
  &                &   \\
\end{bmatrix}, \,
\begin{bmatrix}
-1\\
\vdots \\
-1 \\
\end{bmatrix}
\right) \\
\cong &
(\K_0(\OCKA), [1_{\OCKA}]_0)
\end{align*}
}
Hence we have 
$\OCK_{\widehat{B}_{C_K}} \cong \OCKA$ 
and hence
$\widehat{(\widehat{\OCKA})} =\OCKA.$

 We remark that the equality
$\det(I_{N+3} - \widehat{B}_{C_K}) = - \det(I_N- A)$
holds by a direct computation.

%%%%%%%%%%%%%%%%%%%%%%%%%%%%%%%%%%%%%%%%%%%

{\bf 3.} The Kirchberg algebra $\PI$:

Let $A$ be  the infinite matrix $P_\infty$ defined by \eqref{eq:Pinftymatrix}.
%such as
%\begin{equation}\label{eq:Pinftymatrix} 
%P_\infty =
%\begin{bmatrix}
%1 & 0 & 1 & 1 & 1 & 1 &\cdots &   & \\
%0 & 1 & 1 & 1 & 1 & 1 &\cdots &   & \\
%1 & 0 & 1 & 0 & 0 & 0 &\cdots &   & \\
%0 & 1 & 0 & 1 & 0 & 0 &\cdots &   & \\
%0 & 0 & 1 & 0 & 1 & 0 &       &   & \\
%  &\ddots &\ddots &\ddots   &\ddots &\ddots&\ddots &   & \\            
%  &   &   &   &   & & &   &             
%\end{bmatrix}
%\quad (\text{cf. } \cite[Example \, 4.2]{RaebSzy})
%\end{equation}
As in Section \ref{sec:Preliminaries},
 $P_\infty$ is (RS) and (LI).
 Its Exel--Laca algebra $\OEL_{P_\infty}$ is the Kirchberg algebra
 $\PI$ satisfying $\K_0(\PI) = 0, \K_1(\PI) =\Z$.
As
$c_1 = c_2 =1$ and $c_i =0$ for $i\ge 3$,
and $A(n+1, n+1) =1, c_{n+1} =0$
 for $n \ge 2$, 
  the matrix $A=P_\infty$ is (RS) and (LI) with $K =2$,
  but not (DRS).
Let us compute $\Exts^i(\wtT_{A_n}), i=0,1$ 
by using Proposition \ref{prop:6termforExtwtTAn}
with \eqref{eq:6termAnn+1}.
We fix $n \ge 4$.
We then have
\begin{align*}
\Coker(I_n - \widetilde{A}_{C_n})
%& =  
%\Coker
%{\begin{bmatrix}
%1 - A(1,1) + c_1 & -A(1,2) +c_1 & \dots & -A(1,n) + c_1  \\
%  - A(2,1) + c_2 &1-A(2,2) +c_2 & \dots & -A(2,n) + c_2  \\
%\vdots           & \vdots       &       & \vdots         \\
%  - A(n,1) + c_n & -A(n,2) +c_n & \dots &1-A(n,n) + c_n    
%\end{bmatrix}
%} \\
& = 
\Coker
{\begin{bmatrix}
1     & 1    & 0    &    0 &\dots & 0 \\
1     & 1    & 0    &    0 &\dots & 0 \\
-1    & 0    & 0    &    0 &\dots & 0 \\
 0    &-1    & 0    &    0 &\dots & 0 \\
\vdots&\ddots&\ddots&\ddots&\ddots& \vdots  \\
 0    & \dots& 0    &-1    & 0    & 0
\end{bmatrix}
} \\
& = 
\Coker
{\begin{bmatrix}
0     & 0    & 0    &    0 &\dots & 0 \\
0     & 0    & 0    &    0 &\dots & 0 \\
-1    & 0    & 0    &    0 &\dots & 0 \\
 0    &-1    & 0    &    0 &\dots & 0 \\
\vdots&\ddots&\ddots&\ddots&\ddots& \vdots  \\
 0    & \dots& 0    &-1    & 0    & 0
\end{bmatrix}
} 
 =
\Z
{\begin{bmatrix}
1 \\
0 \\
0 \\
0 \\
 \vdots  \\
 0
\end{bmatrix}
}
\oplus
\Z
{\begin{bmatrix}
0 \\
1 \\
0 \\
0 \\
 \vdots  \\
 0
\end{bmatrix}
} \\
& 
=  \Z e_1 \oplus \Z e_2. 
 \end{align*}
Since $I_{n+1,n}:\Coker(I_{n+1} - \widetilde{A}_{C_{n+1}}) 
\rightarrow \Coker(I_n - \widetilde{A}_{C_n})
$
is defined by $I_{n+1,n}([k_i]_{i=1}^{n+1}) =  [k_i]_{i=1}^{n}$,
by the above identification 
$\eta_n: \Coker(I_n - \widetilde{A}_{C_n}) \rightarrow \Z e_1 \oplus \Z e_2$,
we have a commutative diagram
\begin{equation*}
\begin{CD}
\Coker(I_{n+1} - \widetilde{A}_{C_{n+1}}) @>{I_{n+1,n}}>> \Coker(I_n - \widetilde{A}_{C_n}) \\
@V{\eta_{n+1}}VV    @V{\eta_{n}}VV \\    
\Z e_1 \oplus \Z e_2@>{\id}>> \Z e_1\oplus \Z e_2
\end{CD}.
\end{equation*}
Since $\PI$ has finitely generated $\K$-groups, 
the proof of 
 Proposition \ref{prop:mainExtsExtw1} tells us that 
 $\Exts^1(\OEL_{P_\infty})
\cong \varprojlim \Exts^1(\wtT_{A_n})
$ so that 
 \[
\Exts^1(\calP_\infty) 
\cong \Exts^1(\OEL_{P_\infty})
\cong \varprojlim \Coker(I_n - \widetilde{A}_{C_n}) 
\cong \Z \oplus \Z.
\]
Similarly we have 
\begin{align*}
\Ker(I_n - \widetilde{A}_{C_n})
%& =  
%\Ker(
%{\begin{bmatrix}
%1 - A(1,1) + c_1 & -A(1,2) +c_1 & \dots & -A(1,n) + c_1 & 0 \\
%  - A(2,1) + c_2 &1-A(2,2) +c_2 & \dots & -A(2,n) + c_2 & 0 \\
%\vdots           & \vdots       &       & \vdots        & \vdots \\
%  - A(n,1) + c_n & -A(n,2) +c_n & \dots &1-A(n,n) + c_n & 0   
%\end{bmatrix}
%} )\\
& = 
\Ker
{\begin{bmatrix}
1     & 1    & 0    &    0 &\dots & 0 \\
1     & 1    & 0    &    0 &\dots & 0 \\
-1    & 0    & 0    &    0 &\dots & 0 \\
 0    &-1    & 0    &    0 &\dots & 0 \\
\vdots&\ddots&\ddots&\ddots&\ddots& \vdots  \\
 0    & \dots& 0    &-1    & 0    & 0
\end{bmatrix}
}
%& = 
%\Coker(
%{\begin{bmatrix}
%0     & 0    & 0    &    0 &\dots & 0 \\
%0     & 0    & 0    &    0 &\dots & 0 \\
%-1    & 0    & 0    &    0 &\dots & 0 \\
% 0    &-1    & 0    &    0 &\dots & 0 \\
%\vdots&\ddots&\ddots&\ddots&\ddots& \vdots  \\
% 0    & \dots& 0    &-1    & 0    & 0
%\end{bmatrix}
%}) \\
 =
\Z
{\begin{bmatrix}
0 \\
0 \\
 \vdots  \\
0\\
1\\
0
\end{bmatrix}
}
\oplus
\Z
{\begin{bmatrix}
0 \\
0 \\
 \vdots  \\
0\\
0\\
1
\end{bmatrix}
} \\
& 
= \Z e_{n-1}\oplus \Z e_n. 
 \end{align*}
Let $L: \Z \oplus \Z \rightarrow \Z \oplus \Z$ be the map defined by 
$L(k,l) = (0,k)$.
By the above identification 
$\zeta_n: \Ker(I_n - \widetilde{A}_{C_n}) \rightarrow \Z e_{n-1} \oplus \Z e_n$,
we have a commutative diagram
\begin{equation*}
\begin{CD}
\Ker(I_{n+1} - \widetilde{A}_{C_{n+1}}) @>{I_{n+1,n}}>> \Ker(I_n - \widetilde{A}_{C_n}) \\
@V{\zeta_{n+1}}VV    @V{\zeta_{n}}VV \\    
\Z e_{n-1} \oplus \Z e_n @>{L}>> \Z e_{n-1} \oplus \Z e_n
\end{CD}.
\end{equation*}
so that %As in the proof of Proposition \ref{prop:mainExtsExtw1} (i),
\[
\Exts^0(\calP_\infty) 
\cong \Exts^0(\OEL_{P_\infty})
\cong \varprojlim \Ker(I_n - \widetilde{A}_{C_n}) 
\cong 0,
\]
because of the last projective limit above  is taken along the map 
$L: \Z\oplus \Z \rightarrow \Z \oplus \Z$.
Since $\K_0(\widehat{\calP}_\infty) = \Exts^1(\calP_\infty) \cong \Z\oplus \Z$
and $\K_1(\widehat{\calP}_\infty) = \Exts^0(\calP_\infty) \cong 0,$
the torsion free part of $\K_0(\widehat{\calP}_\infty)$
is not isomorphic to $\K_1(\widehat{\calP}_\infty),$
the reciprocal dual $\widehat{\calP}_\infty$ of $\calP_\infty$
can not be realized as a simple Cuntz--Krieger algebra.

%%%%%%%%%%%%%%%%%%%%%%%%%%%%%
\section{Isomorphism invariant of simple Cuntz--Krieger algebras}
%%%%%%%%%%%%%%%%%%%%%%%%%%%%%%%%%%%%%%%%%%
Let $A=[A(i,j)]_{i,j=1}^N$ be an $N \times N$ irreducible non-permutation matrix
with entries in $\{0,1\}.$ 
The reciprocal dual $\widehat{\OCKA}$ of the Cuntz--Krieger algebra $\OCKA$ is 
a unital Exel--Laca algebra
$\OELwA$ for the  infinite matrix $\widehat{A}$
defined by \eqref{eq:widehatA}.
%%%%%%%%%%%%%%%%%%%%%
%\begin{equation}\label{eq:widehatA}
%\widehat{A}=
%\left[
%\begin{array}{ccccccc}
%&                &1      &0     &0     &\cdots \\
%&\text{\Huge A}^t&\vdots&\vdots&\vdots&\vdots \\
%&                &1      &0     &0     &\cdots \\
%1&\dots          &1      &1     &1     &\cdots \\
%1&\dots          &1      &0     &0     &\cdots \\
%1&\dots          &1      &0     &0     &\cdots \\
%\vdots &\dots    &\vdots &\vdots&\vdots&\vdots \\
%\end{array}
%\right].
%\end{equation}
As in the previous section, the matrix $\widehat{A}$ is (DRS) and (LI)
with $K =N+1$.
\begin{lemma} 
\begin{enumerate}
\renewcommand{\theenumi}{(\roman{enumi})}
\renewcommand{\labelenumi}{\textup{\theenumi}}
\item
$\Exts^1(\widehat{\OCKA}) %\cong \Exts^1(\wtT_{\widehat{A}_n})
\cong \Z^N/(I_N - A^t)\Z^N.$
\item
$\Extw^1(\widehat{\OCKA}) %\cong \Extw^1(\wtT_{\widehat{A}_n})
\cong \Z^N/
\begin{bmatrix}
&     & 1 \\
& I_N - A^t & \vdots \\
&     & 1
\end{bmatrix} \Z^{N+1}
$
\end{enumerate}
\end{lemma}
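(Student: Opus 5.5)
We apply Proposition \ref{prop:mainExtsExtw1} to the infinite matrix $B=\widehat{A}$ of \eqref{eq:widehatA}, which is (DRS) and (LI) with $K=N+1$, and take $n=K=N+1$. This gives
\[
\Exts^1(\widehat{\OCKA}) = \Exts^1(\OEL_B)\cong \Z^{N+1}/(I_{N+1}-\widetilde{B}_{C_{N+1}})\Z^{N+1},
\qquad
\Extw^1(\widehat{\OCKA})\cong \Z^{N+1}/[I_{N+1}-B_{N+1}\mid -C_{N+1}]\Z^{N+2}.
\]
Here $B_{N+1}=B_K$ is the matrix \eqref{eq:BK}, and $C_{N+1}=(0,\dots,0,1)^t$. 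All that remains is to reduce these two integer matrices by elementary row and column operations, in the same spirit as the computations in Lemma \ref{lem:CokerKerIwAnCn}.

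\textbf{Strong group.} By \eqref{eq:ACn}, $\widetilde{B}_{C_{N+1}}$ is obtained from $B_K$ by subtracting $c_i$ from every entry of row $i$. Since $c_i=0$ for $i\le N$ and $c_{N+1}=1$, only the last row changes: it becomes $0$, because the last row of $B_K$ consists entirely of $1$'s. Therefore
\[
I_{N+1}-\widetilde{B}_{C_{N+1}}=\begin{bmatrix} I_N-A^t & -\mathbf{1}\\ 0 & 1\end{bmatrix},
\]
where $\mathbf{1}$ is the all-ones column. Add multiples of the last column to clear the $-\mathbf{1}$ block, which is a column operation using the pivot $1$ in the bottom-right corner. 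The matrix becomes $\operatorname{diag}(I_N-A^t,1)$, and its cokernel is $\Z^N/(I_N-A^t)\Z^N$. This proves (i).

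\textbf{Weak group.} The $(N+1)\times(N+2)$ matrix is
\[
[I_{N+1}-B_K\mid -C_{N+1}]=\begin{bmatrix} I_N-A^t & -\mathbf{1} & 0\\ -\mathbf{1}^t & 0 & -1\end{bmatrix}.
\]
The last column has a single nonzero entry, $-1$, in the last row. Using it, we first clear the rest of the last row by column operations, and then delete that row together with that column; this does not change the cokernel. What remains is the $N\times(N+1)$ matrix $[\,I_N-A^t\mid -\mathbf{1}\,]$. Multiplying its last column by $-1$ gives the matrix in (ii).

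I expect no real obstacle here. The only points needing care are reading off $c_i$ and $B_K$ correctly from \eqref{eq:widehatA}, and checking that each elimination step is unimodular, so that the cokernel is preserved.
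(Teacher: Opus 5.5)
Your proof is correct. For (ii) it is essentially the paper's argument. The paper also passes to $\Extw^1(\wtT_{\widehat{A}_n})$ and reduces $[I_n-\widehat{A}_n\mid -C_n]$ by elementary operations, only with $n=N+2$ instead of your $n=N+1$. Both choices satisfy $n\ge K=N+1$, and yours gives a smaller matrix.

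For (i) you take a different route. The paper computes nothing there; it simply invokes reciprocality, $\Exts^1(\widehat{\OCKA})\cong\K_0(\OCKA)=\Z^N/(I_N-A^t)\Z^N$. You instead compute $\Exts^1(\OEL_{\widehat{A}})$ directly from Proposition \ref{prop:mainExtsExtw1}(ii) via $\widetilde{B}_{C_{N+1}}$.

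The paper's route is one line and does not even use the realization $\widehat{\OCKA}\cong\OEL_{\widehat{A}}$. Yours stays inside the paper's Exel--Laca machinery and treats (i) and (ii) uniformly with one choice of $n$. It also serves as an independent check that Proposition \ref{prop:mainExtsExtw1}(ii) returns $\K_0(\OCKA)$, as reciprocality predicts.

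One small slip in (i): the entries $-\mathbf{1}$ lie in the last column above the pivot $1$. Adding multiples of the last column to other columns therefore cannot remove them. Use row operations instead: add the last row $(0,\dots,0,1)$ to each of the first $N$ rows. This is unimodular on the target, leaves the first $N$ columns unchanged, and yields $\operatorname{diag}(I_N-A^t,1)$ exactly as you claim.
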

\begin{proof}
(i)
By the reciprocal duality, we have 
$\Exts^1(\widehat{\OCKA}) \cong \K_0(\OCKA)$,
so that 
$\Exts^1(\widehat{\OCKA})\cong \Z^N/(I_N - A^t)\Z^N$.

(ii)
Since
$\Extw^1(\widehat{\OCKA}) 
= \Extw^1(\wtT_{\widehat{A}_n})$,
it suffices to show  that 
$\Extw^1(\wtT_{\widehat{A}_n})\cong \Z^N/
\begin{bmatrix}
&     & 1 \\
& I_N - A^t & \vdots \\
&     & 1
\end{bmatrix} \Z^{N+1}
$ for $n=N+2$.
Since 
$c_i =0$ for $i \ne N+1$ and $c_{N+1} =1$, 
we have
{\allowdisplaybreaks
\begin{align*}
\Extw^1(\wtT_{\widehat{A}_{N+2}})
= & \Coker([I_n -\widehat{A}_n | -C_n]: \Z^{N+3}\rightarrow \Z^{N+2})\\
= & \Coker
\begin{bmatrix}
  &          &  & -1     & 0       & -c_1 \\ 
  &I_N - A^t &  & \vdots & \vdots & \vdots \\
  &          &  & -1     & 0       & -c_N \\ 
-1&\dots     &-1& 0      & -1      & -c_{N+1} \\
-1&\dots     &-1& -1     & 1      & -c_{N+2} 
\end{bmatrix}
\\
= & \Coker
\begin{bmatrix}
  &          &  & -1     & 0       & 0 \\ 
  &I_N - A^t &  & \vdots & \vdots & \vdots \\
  &          &  & -1     & 0       & 0    \\ 
-1&\dots     &-1& 0      & -1      & -1       \\
-1&\dots     &-1& -1     & 1      & 0        
\end{bmatrix}
\\
= & \Coker
\begin{bmatrix}
  &          &  & -1     & 0       & 0 \\ 
  &I_N - A^t &  & \vdots & \vdots & \vdots \\
  &          &  & -1     & 0       & 0    \\ 
0 &\dots     &0 & 0      & 0       & -1       \\
0 &\dots     &0 & 0      & 1      & 0        
\end{bmatrix}
\\
= & \Coker
\begin{bmatrix}
  &          &  & 1     \\ 
  &I_N - A^t &  & \vdots \\
  &          &  & 1       
\end{bmatrix}.
\end{align*}
}
\end{proof}
\begin{proposition}\label{prop:InvariantforCK}
For an $N \times N$ irreducible non-permutation matrix with entries in $\{0,1\},$
the pair of the  two groups
\begin{equation}
(\Z^N/(I_n - A^t)\Z^N, \, \, \Z^N/
\begin{bmatrix}
&     & 1 \\
& I_N - A^t & \vdots \\
&     & 1
\end{bmatrix} \Z^{N+1})
\end{equation}
is a complete invariant of the isomorphism class of the simple Cuntz--Krieger algebra $\OCKA$.
\end{proposition}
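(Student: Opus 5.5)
The plan is to reduce the statement to the known result that the pair $(\Exts^1(\OCKA), \Extw^1(\OCKA))$, or equivalently the pair of extension groups of the reciprocal dual, is a complete invariant (\cite[Corollary 1.3]{MatSogabe2}). Then the preceding lemma identifies that pair with the two explicit groups in the statement. Concretely, let $A$ and $B$ be two $N\times N$ and $M\times M$ irreducible non-permutation matrices whose associated pairs of groups are isomorphic. By the preceding lemma, this means
\[
\Exts^1(\widehat{\OCKA})\cong \Exts^1(\widehat{\O}^{\text{\tiny CK}}_B)
\quad\text{and}\quad
\Extw^1(\widehat{\OCKA})\cong \Extw^1(\widehat{\O}^{\text{\tiny CK}}_B).
\]
Here $\widehat{\OCKA}=\OELwA$ by \cite{MatSogabe3}.

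Next, translate these isomorphisms to $\K$-groups of the original Cuntz--Krieger algebras. By reciprocality, $\Exts^1(\widehat{\OCKA})\cong \K_0(\OCKA)$ and $\Exts^0(\widehat{\OCKA})\cong\K_1(\OCKA)$. For simple Cuntz--Krieger algebras, $\K_1(\OCKA)$ is the torsion-free part of $\K_0(\OCKA)$, that is, $\Z^{r}$ with $r=\rank \K_0(\OCKA)$. So the first group alone determines $\K_*(\OCKA)$ as abstract groups.

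The remaining datum is the class of the unit. Rørdam's classification of Cuntz--Krieger algebras (via Kirchberg--Phillips) says that $\OCKA$ is determined by the triple $(\K_0(\OCKA),[1]_0)$ up to automorphisms of $\K_0$. So the task is to show that the second group $\Extw^1(\widehat{\OCKA})$ encodes the orbit of $[1]_0$. Use the six-term sequence \eqref{eq:6termExtforA} for $\widehat{\OCKA}$, together with Lemma \ref{lem:iotaAn2}. These give
\[
\Extw^1(\widehat{\OCKA})\cong \Exts^1(\widehat{\OCKA})/\Z\,\iota(1),
\]
where, under reciprocality, $\iota(1)$ corresponds to the element $[1_N]$ in $\Z^N/(I_N-A^t)\Z^N$. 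This is visible in the displayed presentation, where the extra column is $[1,\dots,1]^t$.

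So the invariant records the finitely generated abelian group $G=\K_0(\OCKA)$ together with the isomorphism class of $G/\langle u\rangle$, where $u=[1]_0$. The main obstacle is to show that this data determines the pair $(G,u)$ up to automorphism of $G$. I expect to follow the argument of \cite[Corollary 1.3]{MatSogabe2}, which shows that $\Exts$ and $\Extw$ of $\OCKA$ together form a complete invariant. The dual version with the roles of $\K$ and $\Ext$ exchanged should go through in the same way. The key point is that $\det(I-A)$ is determined up to sign. The order of $u$, and its position modulo the torsion and free parts, is then recovered from comparing $G$ and $G/\langle u\rangle$. The idea is to take a primary decomposition of $G$ and check that the orbit of an element $u$ under $\Aut(G)$ is determined by the isomorphism types of $G$ and $G/\langle u\rangle$.

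If that direct group-theoretic claim turns out to be problematic, I would instead invoke the completeness of $(\Exts^1,\Extw^1)$ directly for the Kirchberg algebra $\widehat{\OCKA}$, using \cite{MatSogabe2}. Isomorphism of $\widehat{\OCKA}$ with $\widehat{\O}^{\text{\tiny CK}}_B$ then gives $\OCKA\cong\O^{\text{\tiny CK}}_B$ by the uniqueness of reciprocal duals, since $\A\cong\widehat{\widehat{\A}}$.
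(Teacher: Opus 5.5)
Your reduction is the same as the paper's. By reciprocality, $(\Exts^1(\widehat{\OCKA}),\iota_{\widehat{\OCKA}}(1))\cong(\K_0(\OCKA),[1_{\OCKA}]_0)$, so the two groups in the statement are $G=\K_0(\OCKA)$ and $G/\Z u$ with $u=[1_{\OCKA}]_0$. By Rørdam's classification, with $\K_1(\OCKA)$ the free part of $G$, the proposition then amounts to this: $G$ and $G/\Z u$ determine $u$ up to $\Aut(G)$. You correctly single this out as the main obstacle, but you never prove it, and it is the whole non-formal content of the proposition. The paper does not prove it either. It quotes \cite[Proposition 2.19]{SogabeJFA}, which states exactly that $(\K_0(\OCKA),\K_0(\OCKA)/\Z[1_{\OCKA}]_0)$ is a complete invariant of $\OCKA$.

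None of the justifications you offer closes this gap.
\begin{itemize}
\item \cite[Corollary 1.3]{MatSogabe2} concerns $\Exts^1(\OCKA)$ and $\Extw^1(\OCKA)$. By reciprocality these are $\K_0(\widehat{\OCKA})$ and $\K_0(\widehat{\OCKA})/\Z[1]$, which is a different pair from $(\K_0(\OCKA),\K_0(\OCKA)/\Z[1_{\OCKA}]_0)$. So your opening ``or equivalently'' is false, and ``the dual version should go through in the same way'' is not an argument.
\item The sign of $\det(I-A)$ plays no role, since unital isomorphism is governed by $(\K_0,[1]_0)$ alone.
\item The fallback is circular. Because $\A\mapsto\widehat{\A}$ is injective on isomorphism classes, completeness of $(\Exts^1,\Extw^1)$ on the family $\{\widehat{\OCKA}\}$ is just a restatement of the proposition. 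Corollary 1.3 of \cite{MatSogabe2} is about Cuntz--Krieger algebras, and $\widehat{\OCKA}=\OELwA$ is never one, since $\rank\K_0-\rank\K_1=1$. Nor is there a general result to fall back on: by the UCT, $\Exts^1$ and $\Extw^1$ cannot see torsion in $\K_1$.
\end{itemize}

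To finish, either cite \cite[Proposition 2.19]{SogabeJFA}, or prove the group-theoretic lemma itself: for a finitely generated abelian group $G$ and $u,v\in G$, $G/\Z u\cong G/\Z v$ implies $v\in\Aut(G)u$. That needs a genuine computation, not just a primary decomposition. One route is to put $u$ into a normal form $\sum_t p^{m_t}e_t$ in each primary component, compute $G/\Z u$ by Smith normal form, and treat $u$ of infinite order separately.
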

\begin{proof}
By the reciprocal duality,  we have 
$
\Exts^1(\widehat{\OCKA}) \cong \K_0(\OCKA), 
\Extw^1(\widehat{\OCKA}) \cong \K_0(\OCKA)/\Z[1_{\OCKA}]_0.
$
By \cite[Proposition 2.19]{SogabeJFA}, 
the pair $(\K_0(\OCKA), \K_0(\OCKA)/\Z[1_{\OCKA}]_0)$ 
is a complete list of invariants of the isomorphism class of
$\OCKA$, 
so that 
the pair $\Exts^1(\widehat{\OCKA}), \Extw^1(\widehat{\OCKA})$ 
is a complete invariant of $\OCKA$.
\end{proof}
By using \cite[Proposition 5.5]{MatSogabe2}, one knows that 
the direct sum 
\begin{equation} \label{eq:directsum}
\Z^N/(I_n - A^t)\Z^N 
\oplus 
\Z^N/ 
\begin{bmatrix}
%\left[
%\begin{array}{c|c|c}
&     & 1 \\
& I_n - A^t & \vdots \\
&     & 1
%\end{array}
%\right]
\end{bmatrix} 
\Z^{N+1}
\end{equation}
of the two abelian groups remenbers its direct summands.
Hence we have
\begin{corollary}\label{cor:InvariantforCK}
Let $A$ be an $N \times N$ irreducible non permutation matrix with entries in $\{0, 1\}.$
Then the  abelian group 
\begin{equation*}
\Z^{2N}/
%begin{bmatrix}
\left[
\begin{array}{c|c|c}
           &           & 0 \\
I_n - A^t  &   0       & \vdots \\
           &           & 0      \\
	   \hline
           &           & 1      \\
   0       & I_n - A^t & \vdots \\
           &           & 1
\end{array}
\right]
%\end{bmatrix} 
\Z^{2N+1}
\end{equation*}
 is a complete invariant of the isomorphism class 
of the simple Cuntz--Krieger algebra $\OCKA$.
\end{corollary}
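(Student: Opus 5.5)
The plan is to combine Proposition \ref{prop:InvariantforCK} with the cancellation property \cite[Proposition 5.5]{MatSogabe2} quoted just above the statement. First I would observe that the displayed block matrix is block diagonal. Its upper left block is $I_N - A^t$, padded by a zero column of size $N$. Its lower right block is the $N\times (N+1)$ matrix $[\,I_N - A^t \mid \mathbf{1}\,]$, where $\mathbf{1}$ is the column of $1$'s; this is the matrix appearing in Proposition \ref{prop:InvariantforCK}. Splitting $\Z^{2N+1}=\Z^N\oplus\Z^{N+1}$ accordingly, the image of the block matrix is the direct sum of the two images. Hence the quotient group is canonically isomorphic to the direct sum \eqref{eq:directsum}:
\[
\Z^N/(I_N-A^t)\Z^N \oplus \Z^N/[\,I_N-A^t\mid \mathbf{1}\,]\Z^{N+1}.
\]
By the Lemma preceding Proposition \ref{prop:InvariantforCK}, this is $\Exts^1(\widehat{\OCKA})\oplus\Extw^1(\widehat{\OCKA})$, that is, $\K_0(\OCKA)\oplus \K_0(\OCKA)/\Z[1_{\OCKA}]_0$.

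Next, the invariance direction is immediate. If $\OCKA\cong\OCK_B$, then the two summands are isomorphic invariants of the algebra, so the direct sums are isomorphic.

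For completeness, suppose $A$ ($N\times N$) and $B$ ($M\times M$) give isomorphic direct sums. This is the main step. I would apply \cite[Proposition 5.5]{MatSogabe2}, which asserts that such a direct sum $\Z^N/(I-A^t)\Z^N\oplus \Z^N/[I-A^t\mid\mathbf 1]\Z^{N+1}$ remembers its summands. The intended structural reason is that the second summand is the quotient of the first, $\K_0(\OCKA)$, by the cyclic subgroup generated by the unit class. So the second summand has free rank equal to that of the first, or one less, with compatible torsion. From an isomorphism of the direct sums, the cited proposition yields isomorphisms of both the first summands and the second summands separately.

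I expect the main obstacle is checking that the hypotheses of \cite[Proposition 5.5]{MatSogabe2} are met exactly in this form, since it is stated in terms of the pair $(\K_0, \K_0/\Z[1])$. I would verify this by matching the identifications in the Lemma. Granting it, Proposition \ref{prop:InvariantforCK}, which rests on \cite[Proposition 2.19]{SogabeJFA}, then gives $\OCKA\cong\OCK_B$.
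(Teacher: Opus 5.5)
Your proposal is correct and follows the paper's argument. The block-diagonal form identifies the quotient with the direct sum \eqref{eq:directsum}; \cite[Proposition 5.5]{MatSogabe2} then recovers the two summands, and Proposition \ref{prop:InvariantforCK} (resting on \cite[Proposition 2.19]{SogabeJFA}) finishes. Your explicit splitting $\Z^{2N+1}=\Z^N\oplus\Z^{N+1}$ just spells out the identification with \eqref{eq:directsum} that the paper leaves implicit.
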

We finally give a pair of examples to apply
Proposition \ref{prop:InvariantforCK}. 
Let $A, B$ be the two irreducible non-permutation matrices with entries in $\{0, 1\}$
such that
\begin{equation*}
A =
\begin{bmatrix}
1 & 1 & 1 \\
1 & 1 & 1 \\
1 & 0 & 0 \\ 
\end{bmatrix}, \qquad
B = A^t =
\begin{bmatrix}
1 & 1 & 1 \\
1 & 1 & 0 \\
1 & 1 & 0 \\ 
\end{bmatrix}.
\end{equation*}
It is direct to see that 
\begin{align*}
(\Z^3/(I_3 - A^t)\Z^3, \, \, \Z^3/
{\begin{bmatrix}
&     & 1 \\
& I_3 - A^t & \vdots \\
&     & 1
\end{bmatrix} \Z^{4})
}
& = (\Z / 2\Z, 0),\\ 
(\Z^3/(I_3 - B^t)\Z^3, \, \, \Z^3/
{\begin{bmatrix}
&     & 1 \\
& I_3 - B^t & \vdots \\
&     & 1
\end{bmatrix} \Z^{4})
}
& = (\Z / 2\Z, \Z/2\Z)
\end{align*}
Hence the two Cuntz-Krieger algebras $\OCKA$ and $\OCK_B$ are not isomorphic.

\medskip

{\it Acknowledgments:}
%The authors would like to thank anonymous referees for their careful reading
%of the manuscript and their helpful comments and suggestions.
K. Matsumoto is supported by JSPS KAKENHI Grant Number 24K06775.
T. Sogabe is supported by JSPS KAKENHI Grant Number 24K16934.

\end{document}